\documentclass[a4paper,12pt]{article}  
\pdfoutput=1
\usepackage{natbib} 
\usepackage{amsthm}
\usepackage{tikz} 
\usepackage{times} 
\usepackage{microtype}
\usepackage{xcolor}
\usepackage{pict2e} 
\usepackage{amssymb}
\usepackage{graphicx}
\usepackage{enumerate}
\usepackage{amsmath}
\usepackage{algorithm2e}
\usepackage{algpseudocode}
\usepackage[shortlabels]{enumitem}
\usepackage[hyphens]{url} 
\usepackage[colorlinks,linkcolor=purple,citecolor=blue]{hyperref}
\usepackage{cleveref}
\usepackage{thm-restate} 
\usepackage[format=hang,justification=justified,labelfont=bf,labelsep=quad]{caption} 
\sfcode`E=1000  
\sfcode`P=1000  
\newdimen\einr
\newdimen\eeinr 
\def\aabs#1{\par\hangafter=1\hangindent=\eeinr
    \noindent\hbox to\eeinr{\strut\hskip\einr#1\hfill}\ignorespaces}
\def\rmitem#1{\par\hangafter=1\hangindent=\einr
  \noindent\hbox to\einr{\ignorespaces#1\hfill}\ignorespaces} 
\newcommand\bullitem{\rmitem{\raise.17ex\hbox{\kern7pt\scriptsize$\bullet$}}} 
\def\subbull{\vskip-.8\parskip\aabs{\raise.2ex\hbox{\footnotesize$\circ$}}}

\newtheorem{theorem}{Theorem}[section]
\newtheorem{corollary}[theorem]{Corollary}

\newtheorem{lemma}[theorem]{Lemma}
\newtheorem{proposition}[theorem]{Proposition}
\newtheorem{remark}[theorem]{Remark}
\newtheorem{definition}[theorem]{Definition}

\newcounter{claimNb}[theorem]
\newtheorem{claim}[claimNb]{Claim}
\newtheorem*{claim*}{Claim}

\newenvironment{cproof}
{\noindent{\em Proof of Claim.\enspace}}
{\hfill\strut\nobreak\hfill$\Diamond$\par\medbreak}

\def\tombstone{\hbox{\lower.4pt\vbox{\hrule\hbox{\vrule
  \kern7.6pt\vrule height7.6pt}\hrule}\kern.5pt}}

\def\zw#1\par{\vskip2ex{\textbf{#1}}\par\nobreak} 
\newdimen\pix  
\newsavebox{\figA} 
\title{%
Cosigning Crossing Families and Outer-Planar Gadgets
}

\author{
Ahmad Abdi \and Mahsa Dalirrooyfard \and Meike Neuwohner 
}

\date{\today
}

\begin{document}
\maketitle

\begin{abstract}
Let $F$ be a crossing family over ground set $V$, that is, for any two sets $U,W\in{F}$ with nonempty intersection and proper union, both sets $U\cap{W},U\cup{W}$ are in $F$. Let $\sigma:V\to \{+,-\}$ be a signing. We call $\sigma$ a \textit{cosigning} if every set includes a positive element and excludes a negative element. It is \textit{$\cap\cup$-closed} if every pairwise nonempty intersection and co-intersection include positive and negative elements, respectively.

We characterize the existence of ($\cap\cup$-closed) cosignings $\sigma$ through necessary and sufficient conditions. Our proofs are algorithmic and lead to elegant `forcing' algorithms for finding $\sigma$, reminiscent of the Cameron-Edmonds algorithm for bicoloring balanced hypergraphs. We prove that the algorithms run in polynomial time, and further, the cosigning algorithm can be run in oracle polynomial time through an application of submodular function minimization. 

Cosigned crossing families arise naturally in digraphs with vertex set $V$ comprised of sources and sinks, where every set in $F$ is \textit{covered} by an incoming arc. Under mild and necessary conditions, we build an outer-planar arc covering of $F$ when the vertices are placed around a circle. These gadgets are then used to find disjoint dijoins in $0,1$-weighted planar digraphs when the weight-$1$ arcs form a connected component that is not necessarily spanning.
\vspace{0.5cm}

\noindent\textbf{Keywords:}  {graph orientation, crossing family, submodular function minimization, disjoint dijoins, planar graph,  polynomial algorithm.}
\end{abstract}

\section{Introduction}
Let $V$ be a finite set, and let $F\subseteq{2^V}\backslash\{\emptyset, V\}$ be a \textit{crossing family} of subsets of~$V$, that is, $U\cap{W}, U\cup{W}\in{F}$ for any two sets $U,W\in F$ that \textit{cross}, i.e., $U\cap{W}\neq{\emptyset}$ and $U\cup{W}\ne{V}$. Crossing families form a basic class of set families amenable to `uncrossing', which has proven to be a powerful technique in graph orientations and combinatorial optimization (see~\cite{Schrijver03}, Chapters 48, 49, 55, 60, and \cite{Frank11}, Part III).

Motivated by a conjecture on graph orientations by Chudnovsky, Edwards, Kim, Scott, and Seymour~\cite{Chudnovsky16}, which was recently proved by the authors~\cite{ADN25}, 
we introduce new notions and problems on crossing families, which are of independent interest. We then show an application to the recently proved conjecture.

A \textit{cosigning} of a set family $F\subseteq 2^V\backslash \{\emptyset,V\}$ is a signing $\sigma:V\rightarrow\{+,-\}$ such that every set in $F$ includes a \emph{positive} element $u$ ($\sigma(u)=+$) and excludes a \emph{negative} element $v$ ($\sigma(v)=-$). This notion naturally arises in digraphs over vertex set $V$ where every vertex is a source or a sink, and every $U\in F$ receives an incoming arc. Our first result gives necessary and sufficient conditions for crossing families that admit a cosigning. 

\begin{theorem}\label{thm:weak-signing}
Let $V$ be a finite set, and let $F\subseteq 2^V\backslash \{\emptyset,V\}$ be a crossing family. Then $F$ admits a cosigning if and only if every set in $F$ includes an element $u$ and excludes an element $v$ such that $V\backslash\{u\}$ and $\{v\}$ do not belong to $F$. 
\end{theorem}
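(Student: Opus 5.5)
Necessity is immediate. Suppose $\sigma$ is a cosigning and $U\in F$. Pick a positive $u\in U$ and a negative $v\notin U$. Then $V\backslash\{u\}\notin F$, because that set excludes only $u$, which is positive, so it excludes no negative element. Likewise $\{v\}\notin F$, because it contains only $v$, which is negative, so it contains no positive element. So the whole work is sufficiency, and I would prove it by a forcing procedure together with an uncrossing argument showing the procedure never gets stuck.

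\emph{Setup.} Call $u$ \emph{forced positive} if $\{u\}\in F$, and \emph{forced negative} if $V\backslash\{u\}\in F$. No element is both. Indeed, if $\{u\},V\backslash\{u\}\in F$, then the hypothesis applied to $U=\{u\}$ requires an element $u'\in\{u\}$ with $V\backslash\{u'\}\notin F$, which is impossible. Let $A$ and $B$ denote the sets of forced positives and forced negatives. The hypothesis says exactly that every $U\in F$ meets $V\backslash B$ and misses some element of $V\backslash A$.

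\emph{Forcing rules.} I would run a propagation that maintains a partial signing $(P,N)$ with $A\subseteq P$ and $B\subseteq N$.
\begin{itemize}
\item Whenever some $U\in F$ has $U\backslash N=\{w\}$, the element $w$ is forced into $P$.
\item Whenever some $U\in F$ has $(V\backslash U)\backslash P=\{w\}$, the element $w$ is forced into $N$.
\end{itemize}
When no rule applies, I would pick an undecided element, give it an arbitrary sign, and continue. At the end every set has a positive inside and a negative outside, provided no element is ever forced into both $P$ and $N$, and no set $U$ ever reaches $U\subseteq N$ or $V\backslash U\subseteq P$.

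\emph{Main obstacle: proving no conflict arises.} I expect this to be the hardest step. My plan is to track, for each signed element, the \emph{witness set} that forced it. I would then show by uncrossing that whenever a conflict appears, the witness sets can be combined into a set of $F$ that violates the hypothesis.

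The key observation is how witnesses combine. Suppose $w$ was forced positive by $U$, and later some $W\in F$ with $w\in W$ becomes all-negative. Then $U$ and $W$ intersect at $w$. If $U\cup W\ne V$, they cross, and $U\cap W$ or $U\cup W$ lies in $F$ with a strictly smaller set of "free" elements. Iterating, the witnesses uncross down to either a singleton $\{x\}\in F$ with $x$ forced negative, or a co-singleton with $x$ forced positive. Both contradict the disjointness of $A$ and $B$ established in the setup. The case $U\cup W=V$ must be handled through the complementary rule, using the symmetry $U\mapsto V\backslash U$, which preserves the crossing property and swaps the roles of $+$ and $-$.

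To make this rigorous I would choose a minimal counterexample: the first conflict, with witnesses chosen inclusion-minimal for positive forcings and inclusion-maximal for negative forcings. I would then verify case by case that the uncrossed set contradicts this choice. The arbitrary choices made when no rule applies also need to be shown harmless. For that I would argue that, at such a moment, the sets of $F$ that are not yet satisfied form a crossing family on the undecided elements, with the same hypothesis inherited from the current $(P,N)$, so induction on the number of undecided elements applies.
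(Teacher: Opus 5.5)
Your procedure is exactly the paper's Cosigning Forcing Algorithm, but the step you flag as the main obstacle is never carried out, and the sketch you give for it would not work as written. Your ``key observation'' considers an element $w$ forced positive and a later all-negative set $W\ni w$. That configuration cannot occur, since $w$ is positive. More seriously, the endpoint you aim for, a contradiction with $A\cap B=\emptyset$, is too weak. Take $V=\{a,b,c\}$ and $F=\{\{a\},\{b\},\{c\}\}$: the family is crossing and $A\cap B=\emptyset$, yet there is no cosigning, and your procedure starts with $V\backslash\{a\}\subseteq P$. Any correct argument must bottom out in the full hypothesis. In your setup the hypothesis says precisely that no set is stuck under the initial partial signing $(A,B)$. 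Your proposed induction on undecided elements is also unjustified as stated. The residual constraints do not form a cosigning instance of the same type: some sets still need only a positive inside, others only a negative outside.

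The missing idea is a single uncrossing at the \emph{first} conflict; iterating down to singletons is not needed.
\begin{itemize}
\item Let $v$ be unsigned, forced positive by $U$, so $U\backslash\{v\}\subseteq N$, and forced negative by $W$, so $v\notin W$ and $V\backslash(W\cup\{v\})\subseteq P$.
\item Since $v\notin A\cup B$, both forcings are nontrivial. Pick $u\in U\backslash\{v\}$ and $z\in V\backslash(W\cup\{v\})$.
\item Then $u$ is negative, hence $u\in W$. Also $z$ is positive, hence $z\notin U$. So $U$ and $W$ cross automatically. There is no separate case $U\cup W=V$, and no need for complementation or extremal witnesses.
\item Thus $U\cap W\in F$, and $U\cap W\subseteq U\backslash\{v\}$ is entirely negative.
\item Either $U\cap W$ was already all-negative after initialization, contradicting the hypothesis, or its last-signed element $w$ was forced positive by $U\cap W$ when it was signed.
\item In the second case, arbitrary choices are made only when nothing is forced, so $w$'s negative sign was itself forced. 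Hence $w$ was a conflict at an earlier time, a contradiction.
\end{itemize}
The same last-signed-element argument shows that no set can become stuck after initialization. It also makes the arbitrary choices harmless, so the induction is unnecessary.
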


The necessity is fairly clear: given a cosigning of \textit{any} set family $F$, for every set $U$ in the family, the condition holds for any positive element $u$ inside $U$, and any negative element $v$ outside. The crux of the theorem is in establishing the sufficiency of the condition for crossing families, which is done in Section~\ref{proof-weak-signing}; we shall outline our approach shortly after defining a stronger variant of cosignings. 

Given a set family $F\subseteq 2^V\backslash \{\emptyset,V\}$, a \textit{$\cap\cup$-closed\,\footnote{read as cap-cup-closed or intersection-union-closed} cosigning} is a signing $\sigma:V\rightarrow\{+,-\}$ such that $X\cap Y$ contains a positive element for all $X,Y\in F$ that intersect, and $X\cup Y$ excludes a negative element for all $X,Y\in F$ that \emph{co-intersect}, i.e., $X\cup Y\neq V$. Observe that when $X=Y=U$ we recover the condition that defines a  cosigning, so every $\cap\cup$-closed cosigning is indeed a cosigning. 

We give the following necessary and sufficient conditions for crossing families that admit $\cap\cup$-closed cosignings. Necessity follows in the exact same manner as in \Cref{thm:weak-signing} (for all set families).


\begin{theorem}\label{thm:strong-signing}
Let $V$ be a finite set, and 
let $F\subseteq {2^V}\backslash\{\emptyset,V\}$ be a crossing family.
Then $F$ admits a $\cap\cup$-closed cosigning if and only if for all $Z,T\in{F}$ that intersect, $Z\cap{T}$ includes an element $u$ such that $V\backslash\{u\}\neq{B\cup{W}}$  for all $B,W\in{F}$ and for all $X,Y\in{F}$ that co-intersect, $X\cup{Y}$ excludes an element $v$ such that $\{v\}\neq{H\cap{G}}$ for all $G,H\in{F}$.  
\end{theorem}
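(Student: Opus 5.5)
The plan is to reduce \Cref{thm:strong-signing} to \Cref{thm:weak-signing} by applying the latter to a suitably enlarged family. Necessity is as already noted, so only sufficiency needs work. Let
\[
F' \;=\; \{X\cap Y : X,Y\in F,\ X\cap Y\neq\emptyset\}\;\cup\;\{X\cup Y : X,Y\in F,\ X\cup Y\neq V\}.
\]
Since $X\cap Y\subseteq X\ne V$ and $X\cup Y\supseteq X\neq\emptyset$, we have $F\subseteq F'\subseteq 2^V\backslash\{\emptyset,V\}$.

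\textbf{Step 1: a $\cap\cup$-closed cosigning of $F$ is exactly a cosigning of $F'$.} By definition, a $\cap\cup$-closed cosigning puts a positive element in every set $X\cap Y\in F'$ and leaves a negative element outside every set $X\cup Y\in F'$. The other two requirements are automatic. The set $X\cap Y\subseteq X$ excludes the negative element that lies outside $X=X\cup X$. Likewise $X\cup Y\supseteq X$ contains the positive element of $X=X\cap X$. Hence the two notions coincide.

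\textbf{Step 2: the hypothesis of \Cref{thm:strong-signing} is the condition of \Cref{thm:weak-signing} for $F'$.} Note that $V\backslash\{u\}\in F'$ iff $V\backslash\{u\}=B\cup W$ for some $B,W\in F$. Indeed, if $V\backslash\{u\}=B\cap W$, then $B=V\backslash\{u\}=B\cup B$. Symmetrically, $\{v\}\in F'$ iff $\{v\}=H\cap G$ for some $G,H\in F$.

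Now take $X\cap Y\in F'$. The hypothesis gives $u\in X\cap Y$ with $V\backslash\{u\}\notin F'$. Applying the hypothesis to the co-intersecting pair $X,X$ gives $v\notin X$, hence $v\notin X\cap Y$, with $\{v\}\notin F'$.

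For $X\cup Y\in F'$ the argument is symmetric. The hypothesis gives the required $v\notin X\cup Y$ with $\{v\}\notin F'$. Using the intersecting pair $X,X$, we get $u\in X\subseteq X\cup Y$ with $V\backslash\{u\}\notin F'$.

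\textbf{Step 3: crossingness of $F'$, the main obstacle.} To invoke \Cref{thm:weak-signing} we need $F'$ to be crossing, and this is not obvious. Take $A=X_1\cap Y_1$ and $B=X_2\cap Y_2$ that cross. The natural representation $A\cap B=(X_1\cap X_2)\cap(Y_1\cap Y_2)$ needs $X_1\cap X_2\in F$, which requires $X_1,X_2$ to cross. They do intersect, but $X_1\cup X_2$ might equal $V$.

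I would first try to choose the representing pairs minimally, taking each $X_i,Y_i$ inclusion-wise minimal subject to containing $A$ (resp.\ $B$). Then, whenever some pair of the four sets has union $V$, I would use uncrossing to replace one of them by a smaller member of $F$ still containing the relevant set, contradicting minimality. Mixed pairs such as $X\cap Y$ and $Z\cup W$, and the union $A\cup B$ in each case, would be handled the same way, with maximality for unions.

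If this fails, the fallback is to replace $F'$ by its crossing closure $\overline{F'}$. Step 1 still goes through, since $F'\subseteq\overline{F'}$ and any cosigning of $\overline{F'}$ restricts to one of $F'$. The real work then becomes re-verifying Step 2 for $\overline{F'}$. Concretely, one must show that no new set of the form $V\backslash\{u\}$ or $\{v\}$ is generated by uncrossing, which again comes down to an uncrossing argument of the same flavour.

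The last resort is to bypass the reduction altogether. In that case I would rerun the forcing algorithm from the proof of \Cref{thm:weak-signing} directly on $F$, with the forcing rules driven by the pairwise intersections and co-unions instead of single sets.
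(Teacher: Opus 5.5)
\textbf{There is a genuine gap: Step 3 is false, not merely unproven.} Your Steps 1 and 2 are correct. But no crossing enlargement of $F'$ can rescue the reduction to \Cref{thm:weak-signing}.

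Take $V=\{r\}\cup A\cup B\cup C\cup D$ with $A,B,C,D$ pairwise disjoint blocks $X=\{p_X,m_X\}$, and
\[
F=\{A,\; B,\; C\cup D,\; V\backslash(A\cup B),\; V\backslash C,\; V\backslash D\}.
\]
Any two members of $F$ are disjoint, nested, or have union $V$, so $F$ is crossing (and has no complementary pair). Sign each $p_X$ positive and each $m_X$ negative. This is a $\cap\cup$-closed cosigning: every nonempty pairwise intersection contains a whole block, and every pairwise union other than $V$ misses a whole block. So the hypothesis of the theorem holds.

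Yet $F'$ contains $\{r\}\cup D=(V\backslash(A\cup B))\cap(V\backslash C)$ and $\{r\}\cup C$, which cross with intersection $\{r\}$. It also contains $A\cup B$ and $A\cup C\cup D$, which cross with union $V\backslash\{r\}$. Hence every crossing family containing $F'$ contains both $\{r\}$ and $V\backslash\{r\}$, and so admits no cosigning at all. Two consequences follow:
\begin{itemize}
\item $F'$ itself is not crossing, since by your Step 1 it does have a cosigning. So the minimal-representative uncrossing cannot succeed.
\item For $\overline{F'}$ the condition of \Cref{thm:weak-signing} really fails, at the set $\{r\}$. So the crossing-closure fallback cannot succeed either.
\end{itemize}

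\textbf{What is missing.} All of the work sits in your ``last resort'', which you name but do not carry out; this is the route the paper takes. You run the forcing directly on $F$ with pair-based forcings. An element $v$ is forced positive if $v\in Z\cap T$ for some $Z,T\in F$ with all other elements of $Z\cap T$ negative. It is forced negative if $v\notin X\cup Y$ with all other elements outside $X\cup Y$ positive.

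Trivially forced elements are signed first, and the hypothesis rules out a conflict among them. At the first conflict, with $v\in X_1\cap X_2$ and $v\notin Y_1\cup Y_2$, the hypothesis (applied to $Y_1,Y_2$, resp.\ $X_1,X_2$) shows that neither forcing is trivial. One then splits on whether $X_1\cup X_2=V$ and whether $Y_1\cap Y_2=\emptyset$. In each case crossingness of $F$ is applied to a single well-chosen pair to exhibit an earlier conflict. Two of the cases, with suitable labelling:
\begin{itemize}
\item If $X_1\cup X_2\neq V$ and $Y_1\cap Y_2=\emptyset$, then $(X_1\cap X_2)\cap Y_1\in F$ is entirely negative.
\item If $X_1\cup X_2=V$ and $Y_1\cap Y_2=\emptyset$, then $X_1\cup Y_1\in F$ and everything outside $(X_1\cup Y_1)\cup Y_2$ is positive.
\end{itemize}
Every witness is again a member of $F$ or a pair from $F$. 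This is why the direct argument works where the reduction cannot: it never has to handle higher-order combinations such as $\{r\}$ above.
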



The proofs of sufficiency for both Theorems~\ref{thm:weak-signing} and \ref{thm:strong-signing} are quite natural and algorithmic. 
\begin{quote} {\bf ($\cap\cup$-Closed) Cosigning Forcing Algorithm:} Given a crossing family satisfying the necessary conditions, if there is an unsigned element $u$ whose sign in a ($\cap\cup$-closed) cosigning is `forced' given the signed elements so far, then sign $u$ accordingly; otherwise, pick an unsigned element and sign it arbitrarily; repeat.
\end{quote}
The general descriptions of the algorithms above will be made more concrete in the proofs, which then amount to showing that at no iteration of the algorithms do we encounter a `conflict', a scenario in which an unsigned element is forced simultaneously to be both positive and negative. 

The Cosigning Forcing Algorithms are reminiscent of the Cameron-Edmonds algorithm for bicoloring the vertices of a balanced hypergraph such that no edge is monochromatic \cite{Edmonds-balance}. This was the first polynomial algorithm for bicoloring balanced matrices, and inspired the pursuit of the famous polynomial recognition algorithm for balanced hypergraphs nearly a decade later by Conforti, Cornu\'{e}jols, and Rao~\cite{Conforti99}. 

Letting $n:=|V|$ and $m:=|F|$, we prove that the Cosigning Forcing Algorithm has running time at most $m \cdot {n+1 \choose 2}$, while the $\cap\cup$-closed variant has running time at most $2\cdot {m\choose 2} \cdot {n+1 \choose 2}$. Many crossing families that arise in applications, however, have size exponential in $n$, so it might be more appropriate to switch to a different computational model in which $F$ is accessed via a suitable membership oracle. In this model, we prove that the Cosigning Forcing Algorithm can be executed in time at most $2 n^3 T$, where $T$ is the best running time of an oracle polynomial algorithm for computing the minimum of a modular function over a `well-provided lattice family' (see \Cref{sec:weak-oracle}). 
Finding a $\cap\cup$-closed cosigning in oracle polynomial time is left as an open problem.

Our next contribution is to a covering problem on planar digraphs involving $\cap\cup$-closed cosignings. 

\paragraph{The Circle Problem.} Let $V$ be a finite set of vertices, and let $\sigma$ be a $\cap\cup$-closed cosigning for a crossing family $F\subset 2^V\backslash \{\emptyset,V\}$ without complementary sets. Thus, in the digraph $D=(V,A)$ with an arc from every negative vertex to every positive vertex, every set in $F$ receives an incoming arc. Suppose now that the vertices in $V$ are placed at distinct positions around a circle, in a fixed order, drawn on the plane. Can we then move to an `outer-planar' subset of $A$ such that every set in $F$ still receives an incoming arc? Here, a set of arcs is \emph{outer-planar} if the arcs can be drawn inside the circle in a planar manner. While this is not always possible, see \Cref{sec:circle-examples}, we show that it can be guaranteed when every set in $F$ forms an interval around the circle. We summarize this in the following theorem.
\newpage
\begin{theorem}\label{thm:circle}
Let $V$ be a finite set of vertices drawn around a circle, and let $\sigma$ be a $\cap\cup$-closed cosigning for a crossing family $F\subseteq 2^V\backslash \{\emptyset,V\}$ without complementary sets. Suppose every set in $F$ forms an interval around the circle. Then there exists an outer-planar set of arcs from the negative to the positive vertices such that every set in $F$ receives an incoming arc.
\end{theorem}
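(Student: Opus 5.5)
We argue by induction on $|F|$, or more concretely by uncrossing: we reduce $F$ to a laminar-like structure of intervals and then build the outer-planar arc set greedily along the circle. First, since every set in $F$ is an interval of the circular order and $F$ has no complementary pairs, two sets $X,Y\in F$ either nest, are disjoint, or cross as circular intervals. In the crossing case, $X\cap Y$ and $X\cup Y$ are again intervals, and by crossing-closure both lie in $F$. The $\cap\cup$-closed property tells us that $X\cap Y$ contains a positive vertex and that $X\cup Y$ misses a negative vertex. An arc entering $X\cap Y$ from outside $X\cup Y$ enters both $X$ and $Y$.

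We therefore choose a maximal laminar subfamily $\mathcal L\subseteq F$, with respect to the circular interval structure, and aim for an outer-planar arc set covering $\mathcal L$ in which each chosen arc for $U\in\mathcal L$ goes from a negative vertex just outside $U$ to a positive vertex inside $U$.

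The key steps, in order, are as follows.
\begin{enumerate}
\item For each interval $U\in F$, we pick a canonical covering arc: a negative vertex $v\notin U$ and a positive vertex $u\in U$. We choose them extremally, for example $u$ the positive vertex of $U$ closest to its clockwise endpoint and $v$ the negative vertex of $V\setminus U$ closest to that same endpoint on the outside.
\item We prove the non-crossing lemma: two canonical arcs for sets $X,Y\in F$ can be replaced so that they do not cross as chords. If the chords cross, the four endpoints interleave around the circle. From the interval structure we deduce that $X$ and $Y$ cross as sets. We then use the arc chosen for $X\cap Y$, or one for $X\cup Y$; these exist because $\sigma$ is $\cap\cup$-closed and $X\cap Y$, $X\cup Y$ belong to $F$. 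Such an arc enters both $X$ and $Y$, so one arc covers both and the crossing pair can be dropped.
\item We formalize the replacement by a potential function, for instance the total number of crossing chord pairs, or the sum over chosen arcs of their circular length. Each replacement must strictly decrease it while preserving that every set receives an incoming arc.
\end{enumerate}

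The main obstacle is step 2: the replacement arc for $X\cap Y$ must itself not create new crossings with other chosen arcs. We would handle this by processing sets from the innermost outward, i.e.\ by increasing size. We maintain the invariant that all arcs chosen so far are non-crossing and each has both endpoints inside the smallest set it was chosen for, or adjacent to that set. Each new set is then either already covered or can be covered by an arc drawn in an empty face, meaning a region between consecutive chords. Arguing that such a face exists uses the negative vertex outside $X\cup Y$ and the positive vertex inside $X\cap Y$ guaranteed by $\cap\cup$-closedness. The absence of complementary sets ensures that "inside" and "outside" are unambiguous on the circle.
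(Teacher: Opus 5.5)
There is a genuine gap, and it sits in your step~2. The deduction ``the chords interleave, hence $X$ and $Y$ cross as sets'' is false, even when all hypotheses hold.

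Take the circle $q_2,q_1,x,r,n_1,n_2,z$ (clockwise) with signs $+,+,+,+,-,-,-$ and $F=\{\{x\},\{r,n_1,n_2\}\}$. This instance satisfies P0--P4. Your canonical arcs are $(n_1,x)$ and $(z,r)$, and they cross, yet the two sets are disjoint. For a disjoint pair, and dually for a pair with $X\cup Y=V$, no single arc enters both sets. So your only repair move, one arc from outside $X\cup Y$ into $X\cap Y$, does not exist.

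In this example $(z,r)$ is forced, and the only fix is to re-route the arc of $\{x\}$ to $(z,x)$. Your invariant (endpoints inside or adjacent to the set) cannot be met for $\{x\}$ at all, since both neighbours of $x$ are positive. The same example defeats the innermost-outward greedy. $\{x\}$ is processed first, every arc lies in an empty face at that moment, and two of its three possible arcs make $\{r,n_1,n_2\}$ uncoverable.

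There is also a structural problem. In step~2 you only use that $X\cap Y$ has a positive vertex and $X\cup Y$ misses a negative one for \emph{crossing} pairs. That already follows from $\sigma$ being a cosigning, since $X\cap Y,X\cup Y\in F$. But the example $F=\{\{v_1,v_2\},\{v_4,v_5\}\}$ of Section~\ref{sec:circle-examples} is a crossing family of intervals with a cosigning and no solution. So an argument of your shape proves too much. P3 and P4, i.e.\ $\cap\cup$-closedness for disjoint pairs and for pairs covering $V$, must be used essentially, and your last paragraph only asserts that they produce an empty face.

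Two further problems remain. First, covering a maximal laminar subfamily does not cover $F$. For crossing $X,Y$, an arc entering $X\cap Y$ may start in $X\setminus Y$, and an arc entering $X\cup Y$ may end in $Y\setminus X$, so $X$ can stay uncovered; submodularity of in-degree only says that $X$ and $Y$ together receive two arcs. Second, the potential-function step is unproven, since a new arc can create new crossings, as you acknowledge.

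The missing idea is a way of committing to arcs that provably never blocks later sets. The paper achieves this with local reductions:
\begin{itemize}
\item it adds arcs only among a few adjacent sign-blocks;
\item it deletes the covered sets \emph{and} the inner endpoints of the new arcs, so later arcs cannot cross them;
\item it makes extremal choices so that P2--P4 survive the deletions, namely the smallest $W^*$ in Lemma~\ref{lem:ending-minus-out-neighbor} and the dangerous pair maximizing $f$ in Lemma~\ref{lem:m-m-p-p-2-block}. This is precisely where P3 and P4 are used.
\end{itemize}
Once the patterns $-,-,+,+$, $-,-,+,-$ and $+,+,-,+$ are eliminated, the signs alternate, and AP1 and AP2 force $F=\emptyset$ in a minimal counterexample.
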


None of the conditions on $F$ in \Cref{thm:circle} can be dropped while keeping the others; see Section~\ref{sec:circle-examples} for more. For example, consider four vertices $v_1,v_2,v_3,v_4$ arranged clockwise around the circle, with $\sigma(v_1)=\sigma(v_4)=-$ and $\sigma(v_2)=\sigma(v_3)=+$. If $F=\{\{v_1,v_2\},\{v_3,v_4\}\}$, no outer-planar arc set from negative to positive vertices can cover both sets of $F$. This set family satisfies all conditions of \Cref{thm:circle} except that it has complementary sets.\\
This theorem is useful in building planarity-preserving gadgets for digraphs. In particular, as our next contribution, we apply it to find disjoint dijoins in $0,1$-weighted planar digraphs. 

\paragraph{Disjoint Dijoins.} Let $D=(V,A)$ be a digraph. A \textit{dicut} is a cut of the form $\delta^+(U)\subseteq{A}$ where $U\subset V, U\neq \emptyset$ and $\delta^-(U)=\emptyset$. 
We call $U$ the \textit{shore} of the dicut $\delta^+(U)$. The dicut shores of $D$ form a crossing family of subsets of $V$. 
A blocking notion is that of a \textit{dijoin}, which is an arc subset that intersects every dicut at least once.

Consider arc weights $w\in \{0,1\}^A$ such that every dicut of $D$ contains at least two weight-$1$ arcs. Suppose further that the weight-$1$ arcs form one connected component, not necessarily spanning all the vertices. Chudnovsky et al.~\cite{Chudnovsky16} conjectured that the weight-$1$ arcs can then be decomposed into two dijoins of $D$. Recently, we proved this conjecture to be true by using integrality properties of the intersection of two submodular flow systems~\cite{ADN25}. In this paper, we use graph-theoretic techniques to prove the conjecture for planar digraphs.

\begin{theorem}\label{thm:main}
Let $(D,w)$ be a $0,1$-(arc-)weighted planar digraph where every dicut has weight at least $2$ and the weight-$1$ arcs form a connected component, not necessarily spanning. Then the weight-$1$ arcs contain two disjoint dijoins of $D$. 
\end{theorem}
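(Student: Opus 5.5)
We follow the strategy suggested by the introduction. The weight-$1$ arcs are first reduced to a spanning structure by contracting gadgets, and the gadgets themselves come from \Cref{thm:circle}.

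Let $C$ be the connected component formed by the weight-$1$ arcs, and let $R:=V\setminus V(C)$. When $R=\emptyset$, the weight-$1$ arcs span $V$ and every dicut contains at least two of them. Two disjoint dijoins then exist by the known planar case of Woodall-type statements: in the planar dual, dicuts correspond to directed cycles, and we need the weight-$1$ arcs to split into two feedback arc sets. This is the spanning, connected case of the Chudnovsky et al.\ conjecture, and we take it as the base case (it can be proved via the dual by a $2$-coloring argument, or cited).

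The main reduction handles the vertices of $R$. Consider a face-like region of the planar embedding, or more precisely a connected component $K$ of $D[R]$ together with the vertices $N\subseteq V(C)$ adjacent to it. Since $K\cup N$ is embedded in a planar way, the attachment vertices $N$ appear around a closed curve in cyclic order, and we treat that curve as a circle. Let $F$ be the family of subsets $U\subseteq N$ such that some dicut shore $S$ of $D$ satisfies $S\cap N=U$ and $\delta^+(S)$ uses no arc of $K$ that could be bypassed. Equivalently, $F$ is the family of traces on $N$ of dicut shores restricted to $K\cup N$, i.e.\ the sets that $K$ forces to be covered. This family is crossing, since dicut shores form a crossing family and traces preserve crossing. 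By planarity each member of $F$ is an interval around the circle, and we must rule out complementary sets, using the weight-$\ge2$ condition or connectivity of $C$.

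We then need a $\cap\cup$-closed cosigning $\sigma$ of $F$, which we obtain from \Cref{thm:strong-signing}. The necessary condition should be inherited from $D$: vertices of $N$ with the right in/out behaviour inside $K$ serve as the required $u,v$. Next, \Cref{thm:circle} produces an outer-planar arc set from negative to positive vertices of $N$ covering every set of $F$. We replace $K$ by these arcs with weight $0$. The digraph stays planar because the new arcs are drawn inside the region that $K$ occupied. Every dicut of the new digraph pulls back to a dicut of $D$ with the same weight-$1$ arcs, so the weight condition is preserved. Conversely, every dicut of $D$ either is handled by an arc of $K$ or is covered by a gadget arc, so dijoins of the new digraph lift to dijoins of $D$. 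Iterating over all components $K$ reduces the problem to the spanning case.

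The main obstacle is defining $F$ correctly and verifying the hypotheses of \Cref{thm:strong-signing} and \Cref{thm:circle}. We must confirm that the traces form a crossing family, that the $\cap\cup$ condition holds, that they are intervals, and that no two are complementary. We must also confirm that dicut correspondence holds in both directions after the replacement. We would first try to take $\sigma(v)=+$ for the heads and $\sigma(v)=-$ for the tails of arcs between $N$ and $K$, after contracting $K$'s strongly connected pieces.
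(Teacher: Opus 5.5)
Your outline has the paper's architecture: eliminate weight-$0$ vertices with outer-planar gadgets from \Cref{thm:circle}, then cite the spanning case of Chudnovsky et al. But the steps you defer as ``the main obstacle'' are the whole proof, and several of your concrete choices would fail.

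\textbf{Replacing a whole component $K$ does not work.}
\begin{itemize}
\item If $K$ has an undirected cycle, vertices of the weight-$1$ component may lie in faces of $K$. An attachment vertex adjacent to several vertices of $K$ also occurs several times. So $N$ is not a set of distinct points on one circle bounding a disk.
\item A vertex of $N$ can be the tail of an arc into $K$ and the head of an arc out of $K$ even when $D$ is acyclic, so your signing is undefined.
\item A gadget arc $(m,p)$ is harmless only if $m$ reaches $p$ in $D$. Otherwise the set of vertices reaching $p$ is a dicut shore of $D$ entered by $(m,p)$, and dijoins no longer lift.
\end{itemize}
The paper instead removes a single weight-$0$ vertex $v$ at a time, orders $N(v)$ around $v$, and signs $N^-(v)$ negative and $N^+(v)$ positive. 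Then $m\to v\to p$ guarantees that for every dicut $\delta^+(S)$ of $D$, the set $S\setminus\{v\}$ is a dicut shore of the new digraph with the same weight-$1$ arcs. That is exactly why dijoins lift. For the same reason you cannot take an arbitrary cosigning from \Cref{thm:strong-signing}, which the paper never uses here.

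\textbf{Your two correspondence claims are mixed up.} Deleting $v$ creates dicuts (shores containing an out-neighbour but missing an in-neighbour of $v$) that do not pull back to $D$. The new digraph has no light dicut only because the gadget covers
$F=\{U : \delta^+(U) \text{ is a dicut of weight at most } 1 \text{ in } D\setminus\{v\}\}$.
That, not a family of traces, is the right $F$. It is crossing not by inheritance but because $w(\delta^+(U\cap W))+w(\delta^+(U\cup W))=w(\delta^+(U))+w(\delta^+(W))$ for crossing dicut shores, combined with the fact that $D\setminus\{v\}$ has no weight-$0$ dicut.

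\textbf{The missing idea is the preprocessing that makes \ref{P0}--\ref{P4} true for this $F$.} The paper first reduces to a \emph{super-proper} digraph, i.e.\ one satisfying \ref{A1}--\ref{A5}:
\begin{itemize}
\item contract directed cycles;
\item split along a nontrivial minimum-weight dicut and glue the two pairs of dijoins;
\item remove weight-$0$ cut vertices and the bad $2$-cuts with a weighted-degree-$2$ source or sink;
\item bypass weight-$0$ near-sources and near-sinks.
\end{itemize}
Each step preserves planarity and properness and lets dijoins lift.

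The properties then follow:
\begin{itemize}
\item \ref{A1} and \ref{A5} give that $D\setminus\{v\}$ has no weight-$0$ dicut, and that both sides of every weight-$1$ dicut are weakly connected. With planarity around $v$, this yields the interval property.
\item For \ref{P3}: if $U,W\in F$ are disjoint and $N^-(v)\subseteq U\cup W$, then $\delta^+(U\cup W\cup\{v\})$ is a weight-$2$ dicut of $D$. Neither the weight bound nor connectivity of the weight-$1$ arcs rules this out. Only \ref{A2} (forcing its complement to be a sink of weighted degree $2$) combined with \ref{A4} does.
\item \ref{P4} is symmetric.
\end{itemize}
Since \Cref{thm:circle} genuinely fails without \ref{P3} and \ref{P4} (see \Cref{sec:circle-examples}), the hypotheses you need are simply not available without these reductions.
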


Let us give a brief outline of the proof, in part motivating the circle problem. Chudnovsky et al. (\cite{Chudnovsky16}, 1.6 part 2) proved \Cref{thm:main} when the weight-$1$ arcs form a connected component that is \textit{spanning}. This seemingly harmless condition turns out to be a key assumption in their proof  which breaks down without it, and we do not see how to salvage it. Instead, we use their result in a black-box fashion. 

Suppose the weight-$1$ arcs form a connected component that is not spanning, and let $v$ be a \emph{weight-$0$ vertex}, i.e., a vertex incident only to weight-$0$ arcs. Our goal then is to delete $v$ and all the arcs incident with it, and to then install a planarity-preserving gadget connecting the in- and out-neighbors of $v$ that maintains some key features of the problem. We repeat this until the weight-$1$ arcs form a spanning connected component, and then apply the Chudnovsky et al.\ result to prove \Cref{thm:main}.

Toward this end, let $N^-(v),N^+(v)\subset{V}$ be the sets of in- and out-neighbors of $v$, respectively, placed around a circle in the same order as they appear around $v$ in the planar drawing of $D$. We then delete $v$ and all the arcs incident with it, and add a new arc from every vertex in $N^-(v)$ to every vertex in $N^+(v)$, as shown in \Cref{fig:weight-0-vertex-reduction}. Although this reduction essentially maintains the dicuts and their weights, as well as the dijoins, it can make the digraph non-planar. We therefore have to remove some of the newly added arcs, but in doing so we potentially create new dicuts and thus restrict the family of dijoins. This turns out be a nonissue if the minimum weight of a dicut remains at least $2$.

We carefully choose the following crossing family for which $N^-(v),N^+(v)$ yields a $\cap\cup$-closed cosigning:
$$\{U\subseteq{V\backslash{\{v\}}}\mid\delta^+(U)\text{ is a dicut of weight at most 1 in }{D\backslash\{v\}}\}$$ 
We prove in \Cref{sec:bridge-lemma-with-proof} that this family satisfies the hypotheses of \Cref{thm:circle}. We then apply this theorem to decide which of the newly added arcs to keep to regain planarity. 
\begin{figure}
    \centering
    \input{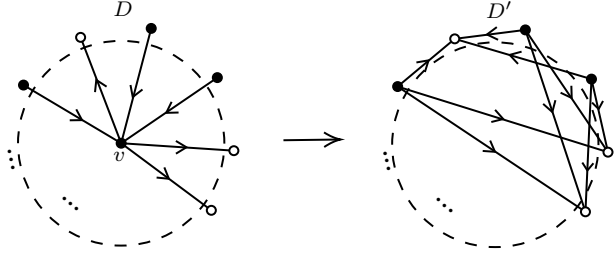}
    \caption[]{(Left) A weight-$0$ vertex $v$ and its in- and out-neighbors in a planar $0,1$-weighted digraph. (Right) A naive reduction that does not preserve planarity. \Cref{thm:circle} tells us how to retain an outer-planar subset of these arcs such that every dicut has weight at least $2$. Filled-in and non-filled-in vertices are the in and out neighbors of $v$, respectively. 
    }
    \label{fig:weight-0-vertex-reduction}
\end{figure}

\paragraph{Outline of The Paper.} 
\Cref{sec:weak-signing} studies cosignings of a crossing family, describes the Cosigning Forcing Algorithm in detail, and proves \Cref{thm:weak-signing}. After that, \Cref{sec:strong-singing} studies $\cap\cup$-closed cosignings and proves \Cref{thm:strong-signing}. \Cref{sec:circle-problem} explains in detail the necessity of the conditions of \Cref{thm:circle},  outlines its algorithmic proof and comments on its polynomial running time. After that, it formally states and proves the auxiliary lemmas, followed by the full proof of \Cref{thm:circle}. \Cref{sec:bridge-lemma-with-proof} establishes the lemma connecting Theorems~\ref{thm:circle} and ~\ref{thm:main}. Lastly, \Cref{sec:dijointhmproof} proves \Cref{thm:main}.

\section{Cosigning a Crossing Family}\label{sec:weak-signing}

In this section, we prove \Cref{thm:weak-signing}, providing necessary and sufficient conditions for when a crossing family admits a cosigning. As part of our proof, we devise a Cosigning Forcing Algorithm. We then analyze the algorithm's running time in two computational models depending on how the crossing family is accessed.
\subsection{The Cosigning Forcing Algorithm}\label{proof-weak-signing}

Let $V$ be a finite set, and let $F\subseteq 2^V\backslash \{\emptyset,V\}$ be a crossing family. We prove that $F$ admits a cosigning if and only if every set in $F$ includes an element $u$ and excludes an element $v$ such that $V\backslash\{u\}$ and $\{v\}$ do not belong to $F$. 

We argued necessity in the introduction already. 
To show sufficiency, assume that the conditions hold. We shall apply the Cosigning Forcing Algorithm from the introduction to sign the elements one by one; let us define `forcings' formally.

At any iteration of the algorithm, an unsigned element $v$ is \textit{forced to be positive} if there exists a set $U\in{F}$ such that $v\in{U}$ and all the elements in $U\backslash\{v\}$ are negatively signed; this forcing is \textit{trivial} if $\{v\}\in F$.

Similarly, an unsigned element $u$ is \textit{forced to be negative} if there exists a set $W\in{F}$ such that $u\notin{W}$ and all the elements outside $W$ other than $u$ are positively signed; this forcing is \textit{trivial} if $V\backslash \{u\}\in F$.

The algorithm starts by first signing all the trivially forced elements. We show that there is no element $w$ that is trivially forced to be both positive and negative, i.e., a \emph{conflict}, since otherwise $\{w\}, V\backslash\{w\}\in{F}$ thus contradicting our condition for either of the sets.

The algorithm then proceeds as follows. If there is an unsigned forced element, sign it accordingly --- we prove that this can be done without a conflict. If not, pick an unsigned element, and sign it arbitrarily. Repeat until all elements are signed.


Suppose for a contradiction that the algorithm encounters a conflict. Take the first one, which is an element $v$ that is forced to be both positive and negative. Then there exist sets $U,W\in{F}$ such that $v\in{U}$ with all the elements of $U\backslash\{v\}$ signed negatively, and $v\notin{W}$ with all the elements outside $W$ other than $v$ signed positively. In particular, all the elements in $U\cap W$ are negatively signed, and all the elements outside $U\cup W$ are positively signed. Observe that if one forcing is trivial, say $U=\{v\}$, then all signings so far have been trivially forced, in particular all the elements outside of $W$ are trivially forced to be positive, contradicting our assumption for this set. Thus, $v$ is not trivially forced, and so $|U|, |V\backslash W|\ge{2}$.
Let $u\in{U\backslash\{v\}}$, which is negatively signed, and therefore belongs to $W$.
Let $z\in{V\backslash(W\cup\{v\})}$, which is positively signed, and is therefore outside $U$.
Subsequently, $u\in U\cap W$ and $z\notin U\cup W$, so $U\cap W\neq \emptyset$ and $U\cup W\neq V$. Thus, as $F$ is a crossing family, we have $U\cap{W}\in{F}$. However, all the elements in $U\cap{W}$ are negatively signed, and this indicates an earlier conflict. 

More precisely, let $w$ be the last element that the algorithm signs from $U\cap{W}$, which occurs at an earlier iteration of the algorithm. Then $w$ must have been forced to be positive due to the remaining signs in $U\cap{W}\in F$. However, the algorithm decided to sign $w$ negatively, a decision that must have been forced. Therefore, $w$ was forced to both signs at an earlier iteration, thereby contradicting our choice of $v$.

We showed that the algorithm signs all the elements without a conflict. At the end, every set $U\in{F}$ must have a positive element inside. If not, the element $l$ that was signed last in $U\in F$ must have been forced to be positive, yet the algorithm forced it to be negative, indicating a conflict at $l$, a contradiction. Similarly, every set in $F$ must exclude a negatively signed element. Thus, the output is a cosigning, thereby finishing the proof.\qed

\subsection{Running Time Analysis}\label{sec:weak-oracle}

Given the setup of \Cref{thm:weak-signing},
the Cosigning Forcing Algorithm from the previous subsection takes time at most $m \cdot {n+1 \choose 2}$, where 
$n=|V|$ and $m=|F|$. In each iteration, we go through each set in $F$ at most once to identify a suitable unsigned forced element, thus taking time at most $m n'$ where $n'$ is the number of unsigned elements. Subsequently, the total running time of the algorithm is at most $\sum_{n'=1}^n mn' = m \cdot {n+1 \choose 2}$.



Let us analyze the running time in a different computational model where $V$ is given explicitly, but the crossing family $F$ is accessed via a \emph{membership oracle} which, given any $U\subseteq V$, tells us in unit time whether or not $U\in F$. 

To avoid exponential information-theoretic lower bounds, we also assume that the crossing family $F$ is \emph{well-provided}. This means that for all distinct $u,v\in V$, the subfamily $F_{uv}:=\{U\in F : u\in U, v\notin U\}$ is \emph{well-provided}. Briefly speaking,  $F_{uv}$ is a lattice family, and can therefore be provided compactly by indicating its unique inclusion-wise minimal and maximal sets along with a pre-order on the elements --- see \cite{Schrijver03}, \S 49.3 for more details.
To analyze the running time of the algorithm in this model, we need to describe a polynomial-time subroutine for deciding whether an unsigned element $v$ is forced to a particular sign. This can be done by signing $v$ the other way and then deciding whether the signing is \textit{invalid}, i.e., there is a set in $F$ whose elements inside are all negatively signed or whose elements outside are all positively signed.

Suppose we want to check whether an unsigned element $u$ is forced to be positive. We sign $u$ negatively and check whether there is a set $U\in F$ with $u\in U$ such that all elements in $U$ are negatively signed. Consider the vector $a\in \{0,1\}^V$ where $a_v=0$ if $v$ is negatively signed, and $a_v=1$ otherwise. Let $f:2^V\to \mathbb{Z}_{\geq 0}$ be the modular function defined as $f(U)=\sum_{v\in U} a_v$ for all $U\subseteq V$. Then $\min_{u\in U\in F} f(U) = 0$ if and only if there is a set in $U\in F$ with $u\in U$ whose elements inside are all negatively signed; in this case, any minimizer $U^\star$ will be such a set. To find it, we iterate over all $v\in V\setminus \{u\}$ and compute $\min_{U\in F_{uv}} f(U)$ using any algorithm for finding the minimum of a (sub)modular function over a lattice family, which can be done in oracle polynomial time  (see \cite{Schrijver03}, (49.25)). Since $\{V\backslash U:U\in F\}$ is a well-provided crossing family as well, we can check whether $u$ is forced to be negative analogously. If we denote the best running time of an oracle polynomial time algorithm for finding the minimum of a (sub)modular function over a lattice family by $T$, we can find a forced element (or decide that no element is forced) in time at most $2\cdot n\cdot (n-1)\cdot T$. Over all $n$ iterations, this yields a running time of at most $2n^3\cdot T$ in the oracle model.

\section{$\cap\cup$-Closed Cosignings of a Crossing Family}\label{sec:strong-singing}
In this section, we prove \Cref{thm:strong-signing}, providing necessary and sufficient conditions for when a crossing family admits a $\cap\cup$-closed cosigning. As part of our proof, we devise a $\cap\cup$-Closed Cosigning Forcing Algorithm. We then analyze the algorithm's running time when the crossing family is given explicitly.

\subsection{The $\cap\cup$-Closed Cosigning Forcing Algorithm}\label{proof-strong-signing}

Let $V$ be a finite set, and 
let $F\subset {2^V}\backslash\{\emptyset,V\}$ be a crossing family without complementary sets. 
We prove that $F$ admits a $\cap\cup$-closed cosigning if and only if for all $Z,T\in{F}$ that intersect, $Z\cap{T}$ includes an element $u$ such that $V\backslash\{u\}\neq{B\cup{W}}$  for all $B,W\in{F}$ and for all $X,Y\in{F}$ that co-intersect, $X\cup{Y}$ excludes an element $v$ such that $\{v\}\neq{H\cap{G}}$ for all $G,H\in{F}$.

Necessity of the conditions is straight-forward. To show sufficiency, assume that the conditions hold. We closely follow the start of the proof for cosignings.
We shall apply the $\cap\cup$-Closed Cosigning Forcing Algorithm from the introduction to sign the elements sequentially; let us define `forcings' rigorously.

At any iteration of the algorithm, an unsigned element $v$ is \textit{forced to be positive} if there exist $Z,T\in F$, possibly equal, such that $v\in Z\cap T$ and all the other elements in $Z\cap T$ are negatively signed; the forcing is \textit{trivial} if $Z\cap T=\{v\}$.

Similarly, an unsigned element $v$ is \textit{forced to be negative} if there exist $X,Y\in F$, possibly equal, such that $v\notin X\cup Y$ and all the other elements outside $X\cup Y$ are positively signed; the forcing is \textit{trivial} if $X\cup Y=V\backslash\{v\}$.

The algorithm kicks off by first signing all the trivially forced elements. We claim that there is no element $w$ that is trivially forced to be both positive and negative, i.e., a \emph{conflict}. If not, then $\{w\}=Z\cap T$ and $V\backslash\{w\}=X\cup Y$ for some $X,Y,Z,T\in{F}$, thus our assumption is violated for $Z$ and $T$, as well as for $X$ and $Y$. 

The algorithm then proceeds as follows. If there is an unsigned forced element, sign it accordingly --- we prove that this can be done without a conflict. If not, pick an unsigned element, and sign it arbitrarily. Repeat until all elements are signed.

Suppose for a contradiction that the algorithm encounters a conflict. Take the first one, which is an element $v$ that is forced to be both positive and negative.

Then there exist sets $X_1,X_2,Y_1,Y_2\in{F}$ such that $v\in X_1\cap X_2=:U$ and all the other elements in the intersection are signed negatively, and $v\notin Y_1\cup Y_2=:W$ and all the other elements outside the union are signed positively. Observe that if one forcing is trivial, say $X_1\cap{X_2}=\{v\}$, then all signings so far have been trivially forced. In particular all elements outside of $Y_1\cup{Y_2}$ (including $v$) are trivially forced to be positive, a contradiction to our assumption. Therefore, $v$ is not trivially forced to any sign.  
There are four cases:

\textbf{Case 1:} $X_1\cup X_2\neq V$ and $Y_1\cap Y_2\neq \emptyset$. Then $U,W$ both belong to the crossing family $F$. The proof of \Cref{thm:weak-signing} applies directly here to obtain a contradiction.

\textbf{Case 2:} $X_1\cup X_2\neq V$ and $Y_1\cap Y_2=\emptyset$. 
Then we have $U\in{F}$, $v\in{U}$ and all elements of $U\backslash\{v\}$ are signed negatively. 
Since $v$ is not trivially forced, let $u\in {U\backslash\{v\}}$; $u$ must be negatively signed. 
Since no negative element exists in $V\backslash(Y_1\cup{Y_2})$, we have $u\in{Y_1\cup{Y_2}}$. We may assume that $u\in{Y_1}$, so $U\cap{Y_1}\neq{\emptyset}$. Since $v$ is not trivially forced, let $z\in{V\backslash(Y_1\cup{Y_2}\cup\{v\})}$; $z$ is positively signed. 
Since $U$ has no positive element, $z\notin{U}$, so $U\cup{Y_1}\neq{V}$. Then since $F$ is a crossing family, we have $U\cap{Y_1}\in{F}$. 
Observe that all the elements in $U\cap{Y_1}$ are negative. 
Let $w$ be the last element in $U\cap{Y_1}$ signed negatively, earlier in the process. Then $w$ must have been forced to be positive by $U\cap{Y_1}$. However, $w$ received a negative sign in the algorithm, a decision that must have been forced. Thus, $w$ was forced to both signs earlier, contradicting that $v$ was the first conflict. 

\textbf{Case 3:} $X_1\cup X_2=V$ and $Y_1\cap Y_2\neq \emptyset$. Then we have $W\in{F}$, $v\notin{W}$, and all elements in $V\backslash(W\cup\{v\})$ are signed positively. 
Since $v$ is not trivially forced, let $x\in{V\backslash(W\cup\{v\})}$; $x$ is positively signed. 
Since no positively signed element exists in $U=X_1\cap{X_2}$, we have $x\notin{X_1\cap{X_2}}$, thus we may assume $x\notin{X_1}$, and so $W\cup{X_1}\neq{V}$. 
Since $v$ is not trivially forced, let $y\in{(X_1\cap{X_2})\backslash\{v\}}$; $y$ is signed negatively. Since no negative element is in $V\backslash{W}$, we have $y\in{W}$, and so $W\cap{X_1}\neq{\emptyset}$. Since $F$ is a crossing family, we have $W\cup{X_1}\in{F}$. Observe that all the elements of $V\backslash(W\cup{X_1})$ are positive. 
Let $z$ be the last element of $V\backslash(W\cup{X_1})$ signed positively, earlier in the process. Then $z$ must have been forced to be negative by $W\cup{X_1}$. However, $z$ was signed positively in the algorithm, a decision that must have been forced. Thus $z$ was forced to both signs earlier, contradicting that $v$ was the first conflict. 

\textbf{Case 4:} $X_1\cup X_2=V$ and $Y_1\cap Y_2=\emptyset$. 
Since $v$ is not trivially forced, let $x\in{(X_1\cap{X_2})\backslash\{v\}}$, which is signed negatively, and let $y\in{V\backslash(Y_1\cup{Y_2}\cup\{v\})}$, which is signed positively. Since no negative element exists in $V\backslash(Y_1\cup{Y_2})$, we have $x\in{Y_1\cup{Y_2}}$, and we may assume $x\in{Y_1}$. Moreover, since no positive element exists in $X_1\cap{X_2}$, we have $y\notin{X_1\cap{X_2}}$ and so we can assume $y\notin{X_1}$. As $x\in{X_1\cap{Y_1}}$ and $y\notin{X_1,Y_1}$, we have $X_1\cap{Y_1}\neq{\emptyset}$ and $X_1\cup{Y_1}\neq{V}$. Then since $F$ is a crossing family, we have $X_1\cup{Y_1}\in{F}$. 
Since $y\notin{X_1,Y_1,Y_2}$, we have  $(X_1\cup{Y_1})\cup{Y_2}\neq{V}$. Observe that 
all the elements in $V\backslash(X_1\cup{Y_1}\cup{Y_2})$ are positive. Let $t$ be the last element from $V\backslash(X_1\cup{Y_1}\cup{Y_2})$ that was signed. Then $t$ must have been forced to be negative by $X_1\cup{Y_1}$ and $Y_2$. However, $t$ received a positive sign in the algorithm, a decision that must have been forced. Thus $t$ was forced to both signs earlier, contradicting that $v$ was the first conflict. 

We showed that the algorithm signs all the elements without a conflict. At the end, $X\cap Y$ must contain a positive element for all $X,Y\in F$ that intersect. If not, the element $l$ that was signed last from $X\cap Y$ must have been forced to be positive, yet the algorithm forced it to be negative, indicating a conflict in the algorithm, a contradiction. Similarly, $Z\cup T$ must exclude a negative element for all $Z,T\in F$ that co-intersect. Thus, the output is a $\cap\cup$-closed cosigning, thus finishing the proof.\qed

\subsection{Running Time Analysis}

Given the setup of \Cref{thm:strong-signing},
the $\cap\cup$-Closed Cosigning Forcing Algorithm from the previous subsection takes time at most $2\cdot {m \choose 2} \cdot {n+1 \choose 2}$, where 
$n=|V|$ and $m=|F|$. In each iteration, we go through each pair of sets in $F$ twice to identify a suitable unsigned forced element, thus taking time at most $2\cdot {m \choose 2} \cdot n'$ where $n'$ is the number of unsigned elements. Subsequently, the total running time of the algorithm is at most $2\cdot {m \choose 2} \cdot {n+1 \choose 2}$.

\section{The Circle Problem}\label{sec:circle-problem}
In this section, we define the circle problem, which, roughly speaking, is concerned with the problem of covering a crossing family of circular intervals by an outer-planar set of arcs. More precisely, given a set of vertices $V$ around a circle $C$, a crossing family $F$ of circular intervals without complementary sets, and a $\cap\cup$-closed cosigning $\sigma$, our goal will be to find an outer-planar set of arcs, oriented from the set $V^-$ of negative vertices to the set $V^+$ of positive vertices, such that each interval in $F$ receives an entering arc.
We will do that algorithmically thus proving \Cref{thm:circle}. 
Our algorithm proceeds as follows: Starting with an empty arc set, we iteratively add a suitable outer-planar set of arcs and delete all sets from the family $F$ that receive an incoming arc, and repeat. This approach is motivated by the following lemma (proven in ~\Cref{sec:circle-lemmas}):
\begin{restatable}{lemma}{LemmaDeleteCoveredSets}\label{lem:covering-arcs} Let $(V,\sigma,F)$ be an instance of the circle problem. Let $E$ be a set of arcs from $V^-$ to $V^+$. Let $F'$ consist of all the sets in $F$ that are not covered by $E$. Then $(V,\sigma, F')$ is a valid instance of the circle problem.
\end{restatable}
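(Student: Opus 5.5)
The plan is to check each defining condition of an instance of the circle problem for $(V,\sigma,F')$. Most of them pass from $F$ to the subfamily $F'\subseteq F$ for free. The one condition that needs an argument is that $F'$ is again a crossing family.

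\textbf{Inherited conditions.} Since $F'\subseteq F$, several properties hold automatically:
\begin{itemize}
\item every set of $F'$ is a nonempty proper subset of $V$;
\item every set of $F'$ is an interval around the circle;
\item $F'$ has no complementary sets, since such a pair would already lie in $F$.
\end{itemize}
The $\cap\cup$-closed cosigning conditions are also inherited. Each such condition is a requirement on a pair $X,Y$ of members of the family: a positive element in $X\cap Y$ when the two sets intersect, and a negative element outside $X\cup Y$ when they co-intersect. Every pair from $F'$ is a pair from $F$, so $\sigma$ remains a $\cap\cup$-closed cosigning of $F'$. The vertex set and its circular order are unchanged.

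\textbf{Crossing closure.} Let $U,W\in F'$ cross. Because $F$ is crossing, $U\cap W\in F$ and $U\cup W\in F$, and it remains to show that neither set is covered by $E$. Recall that an arc $(a,b)\in E$ covers a set $S$ when it enters $S$, that is, when $b\in S$ and $a\notin S$.
\begin{itemize}
\item Suppose $(a,b)$ covers $U\cap W$. Then $b\in U\cap W$ and $a\notin U\cap W$, so $a\notin U$ or $a\notin W$. In the first case the arc enters $U$, and in the second it enters $W$. Either way this contradicts $U,W\in F'$.
\item Suppose $(a,b)$ covers $U\cup W$. Then $a\notin U\cup W$, so $a$ lies outside both $U$ and $W$. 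Also $b$ lies in $U$ or in $W$, and the arc enters whichever of the two contains $b$. Again this is a contradiction.
\end{itemize}
Hence $U\cap W,U\cup W\in F'$, and $F'$ is a crossing family.

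I do not expect a real obstacle here. The only point needing care is to make explicit the list of conditions that define a valid instance, and to note that each one is either monotone under passing to subfamilies or is the crossing property handled above.
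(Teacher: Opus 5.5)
Your proposal is correct and matches the paper's proof: the paper likewise notes that \ref{P0} and \ref{P2}--\ref{P4} pass trivially to the subfamily $F'\subseteq F$. It then proves crossing closure by the same argument you give, namely that an arc entering $U\cap W$ must enter $U$ or $W$, and an arc entering $U\cup W$ enters whichever of $U,W$ contains its head.
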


The main obstacle in pursuing such a recursive strategy is how to choose the arc set $E$ in such a way that the new instance $(V,\sigma, F')$ admits a solution $E'$ that is compatible with $E$ in the sense that $E\cup E'$ remains outer-planar (and how to enforce this property without having to handle extra constraints in later iterations). To this end, we 
first observe that we can add arcs between adjacent vertices `for free'; these will never cross any other arcs. Adding such arcs and reducing the instance accordingly will be the first step of our algorithm. In general, we will of course not get away with only adding arcs between adjacent vertices and when adding an arc between non-adjacent vertices, this will separate the vertex sets into two parts, which can no longer be joined by any arcs. In the following, we will design a set of reduction rules that will, roughly speaking, add a set of arcs involving a small number of adjacent \textit{sign-blocks} (see Section~\ref{sec:preliminiaries-circle}), delete all covered sets, and then restrict the instance by deleting all endpoints of the added arcs except for the two outer ones. This will ensure that further arcs that we add do not cross any previously added one. The difficulty is to ensure that the deletion of elements produces a valid instance.

\subsection{Preliminaries and Notation\label{sec:preliminiaries-circle}}
For a set of $n$ vertices $V$ around a circle $C$ where each vertex is associated with a sign given by $\sigma:V\rightarrow\{+,-\}$, and a family $F$ of subsets of $V$, we say $(V,\sigma,F)$ is a (valid) instance of the circle problem if $F$ has the  following properties: 
\begin{enumerate}[label={\bfseries P\arabic*}, start=0]
    \item\label{P0} For every $U\in{F}$, $U$ is an interval of consecutive vertices around $C$.
    \item\label{P1} $F$ is a crossing family. 
    \item\label{P2} For every $U\in{F}$, there exists a positive vertex in $U$ and there exists a negative vertex in $V\backslash{U}$. 
    \item\label{P3} For every $U,W\in{F}$ such that $U\cap{W}=\emptyset$, there exists a negative vertex in $V\backslash(U\cup{W})$. 
    \item\label{P4} For every $U,W\in{F}$ such that $U\cup{W}={V}$, there exists a positive vertex in $U\cap{W}$.
\end{enumerate} 
Observe that properties~\ref{P2}-\ref{P4} are equivalent to stating that $\sigma$ is a $\cap\cup$-closed cosigning and $F$ does not contain a pair of complementary sets. ~\Cref{sec:circle-examples} provides an example and explains why each property is necessary for \Cref{thm:circle}. 

Let $(V,\sigma,F)$ be an instance of the circle problem. Denote by $V^+$ and $V^-$ the set of vertices of $V$ with $+$ and $-$ signs, respectively. 

For any set $U\subseteq{V}$, let $U^c=V\backslash{U}$. Denote by $U^+$ the set of $+$signed vertices in $U$ and by $U^-$ the set of $-$ signed vertices in $U$. Furthermore, let $U^{c+}$ be the set of $+$ signed vertices in $U^c$ and let $U^{c-}$ be the set of $-$ signed vertices in $U^c$. 

For any $V'\subseteq{V}$ and $U\subseteq V$, let $U[V']=U\cap{V'}$. We define the restriction of $F$ to the ground set $V'$ to be $F[V']=\{U[V']\mid U\in{F}, U[V']\ne \emptyset\}$. Furthermore, denote by $\sigma[V']$ the restriction of the sign function $\sigma$ to $V'$. 

Let $(m,p)$ be an arc from a $-$ signed vertex $m$ to a $+$ signed vertex $p$. We say $(m,p)$ \textit{covers} $U\in{F}$ if $m\notin{U}$ and $p\in{U}$. For a set of arcs $A$ from $V^-$ to $V^+$, let $F_A$ consist of all the sets $W\in{F}$ such that $W$ is covered by some arc in $A$. We say a set $A$ of arcs covers $F$ if $F_A=F$. 

For a set $U\in{F}$, a vertex $u\in{U}$ is an \textit{end vertex} if it is at the end of the interval $U$ forms on the circle. If $|U|=1$ then it has one end vertex and if $|U|\ge 2$, then it has two end vertices. If $v$ is an end vertex of $U$, an \textit{adjacent out-neighbor} of $v$ with respect to $U$ is a vertex $v'$ next to $v$ on the circle with $v'\notin{U}$. Note that by \ref{P2}, for any $U\in{F}$, we have $U\neq{V}$ and hence its end vertices have adjacent out-neighbors. 

We call a set of consecutive vertices with the same sign around the circle a \textit{block}. We say a set in $U\in{F}$ is a \textit{$1$-block} if it forms one block around the circle, i.e., all of its vertices have the same sign. Note that by \ref{P2}, the common sign is a $+$. We call a set $W\in{F}$ a \textit{$2$-block} if $W^+$ and $W^-$ are two nonempty blocks.  Moreover, we say a set $X\in{F}$ is a \textit{co-$2$-block} if $X^{c+}$ and $X^{c-}$ are two consecutive nonempty blocks. For a vertex $v$, the \textit{sign-block} of $v$ is the maximal block of vertices around the circle that contains $v$. 

We call a set of directed arcs $A$ from $V^-$ to $V^+$ \textit{outer-planar} if we can draw the arcs in $A$ inside the circle $C$ with no two arcs crossing each other. For an instance of the circle problem $(V,\sigma,F)$, we say that a set of directed arcs $E$ from $V^-$ to $V^+$ is a \textit{solution} to $(V,\sigma,F)$ if $E$ is outer-planar and covers $F$. 

For a given instance of the circle problem $(V,\sigma,F)$, we define its \textit{dual} $(V,\bar{\sigma},\bar{F})$ as follows: Let $\bar{\sigma}$ be the flipped signing, i.e., $\bar{\sigma}(v)=+$ if and only if $\sigma(v)=-$ and vice versa. Furthermore, let $\bar{F}=\{V\backslash{U}\mid U\in{F}\}$. 

\begin{remark}\label{rem:dual}
For a given instance of the circle problem $(V,\sigma,F)$, its dual $(V,\bar{\sigma}, \bar{F})$ is an instance of the circle problem as well, i.e., $\bar{F}$ has properties \ref{P0}-\ref{P4}. Furthermore, if $E$ is a solution to $(V,\sigma,F)$, then the arc set $\bar{E}$ that we obtain from $E$ by reversing the direction of each arc, is a solution to $(V,\bar{\sigma}, \bar{F})$. 
\end{remark}

\subsection{Necessity Examples}\label{sec:circle-examples}
In this section, we start by an example for \Cref{thm:circle}. After that we explain why the conditions~\ref{P0}-\ref{P4} are necessary and cannot be dropped. 

Let $V=\{v_1,v_2,...,v_9\}$ around a circle $C$ where the vertices $v_3,v_6,v_9$ have a negative sign and the rest of the vertices have a positive sign, as shown on the left in figure~\ref{fig:main-example}. Let $U_1=\{v_1,v_9\}, U_2=\{v_1, v_2, v_3\}, U_3=\{v_4, v_5, v_6\}, U_4 = \{v_1, v_4, v_5, v_6, v_7, v_8, v_9\}$ and let $F=\{U_1, U_2, U_3, U_4\}$. The sets of $F$ are shown in the middle in figure~\ref{fig:main-example}. 
It can readily checked that properties \ref{P0}-\ref{P4} hold for the vertices $V$ with the declared signing and the set $F$. Consider these set of arcs $\{(v_6,v_1), (v_3,v_5)\}$. This set is outer-planar, the arc $(v_6,v_1)$ is incoming to $U_1,U_2$ and the arc $(v_3,v_5)$ is incoming to $U_3, U_4$ as shown on the right in figure~\ref{fig:main-example}. 
\begin{figure}[hbt!]
    \centering
    \input{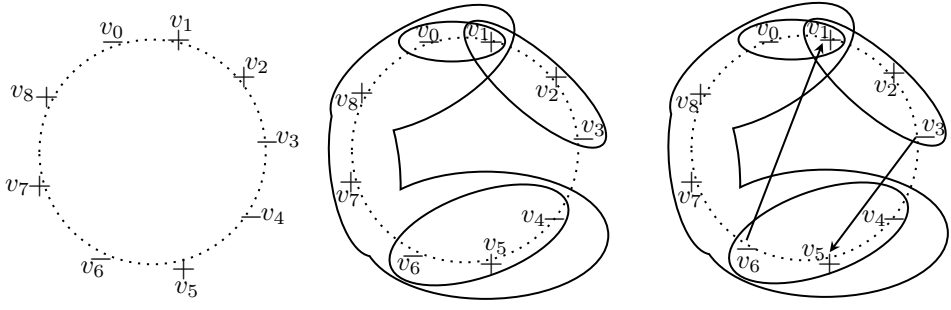}
    \caption[main-example]{An example for \Cref{thm:circle} with the outer-planar arc subset from $V^-$ to $V^+$}
    \label{fig:main-example}
\end{figure}

The reader might be curious about where the conditions of \Cref{thm:circle} come from and whether they are necessary for the outcome of the theorem. Condition~\ref{P2} is clearly necessary for the outcome. For the other conditions, we show that they cannot be dropped by providing multiple examples that have all the properties except one, and we show that the outcome of the theorem does not hold. For condition number $i\in\{0,1,3,4\}$, we use the notations $V^{(i)}$ as the vertex set, $\sigma^{(i)}$ as the sign function and $F^{(i)}$ as the family of subsets of the vertices.  

To show property~\ref{P0} cannot be dropped, let $V^{(0)}=\{v_1,v_2, v_3,v_4, v_5,v_6\}$ around a circle $C$ where $v_1,v_2,v_3$ have a positive sign and $v_4,v_5,v_6$ have a negative sign. Let $F^{(0)}=\{U_1,U_2,U_3\}$ where $U_1=\{v_1,v_6\}$, $U_2=\{v_2,v_5\}$ and $U_3=\{v_3,v_4\}$. The sets of $F$ are shown in figure~\ref{fig:drop-p0-example}. It is simple to check that the properties \ref{P1}-\ref{P4} hold for this example and \ref{P0} does not hold due to the set $U_2$. Now we argue that there is no outer-planar set of arcs from $V^{(0)-}$ to $V^{(0)+}$ for this example such that every set $U_i$ for $i\in\{1,2,3\}$ receives an arc. We try to construct the arc-set. 
For set $U_3$ to receive an arc, the arc should be from $v_5$ or $v_6$ to $v_3$. For set $U_1$ to receive an arc, the arc should be from $v_4$ or $v_5$ to $v_1$. If in the former, the arc comes from $v_6$, then $U_1$ cannot receive any arc while keeping the arc-set outer-planar. Hence, $U_3$ must receive an arc from $v_5$, forcing $U_1$ to also receive its arc from $v_5$. However, now the set $U_2$ cannot receive any arc to its only positive signed vertex $v_2$ while keeping the arc-set outer-planar. Therefore, the outcome of the theorem does not hold for this example. Observe that we can extend this example for any number of vertices by replacing vertex $v_2$ by any positive number of $+$ signed vertices, all belong only to $U_2$, and/or replacing vertex $v_5$ with any positive number of $-$ signed vertices, all belong only to $U_2$.   
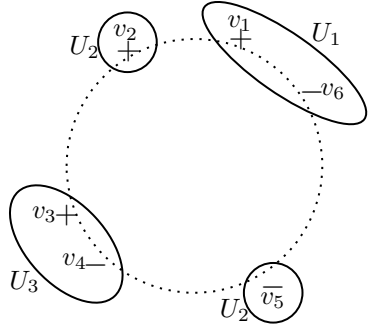
\begin{figure}
\centering
\tikzset{every picture/.style={line width=0.75pt}} 

\begin{tikzpicture}[x=0.75pt,y=0.75pt,yscale=-0.75,xscale=0.75]

\draw  [dash pattern={on 0.84pt off 2.51pt}] (62,175.91) .. controls (62,128.92) and (100.1,90.82) .. (147.09,90.82) .. controls (194.08,90.82) and (232.18,128.92) .. (232.18,175.91) .. controls (232.18,222.9) and (194.08,261) .. (147.09,261) .. controls (100.1,261) and (62,222.9) .. (62,175.91) -- cycle ;
\draw  [color={rgb, 255:red, 0; green, 0; blue, 0 }  ,draw opacity=1 ] (153.73,69.38) .. controls (160.1,60.35) and (189.09,69.87) .. (218.49,90.63) .. controls (247.89,111.39) and (266.56,135.54) .. (260.18,144.56) .. controls (253.81,153.58) and (224.82,144.07) .. (195.42,123.31) .. controls (166.02,102.54) and (147.35,78.4) .. (153.73,69.38) -- cycle ;
\draw  [color={rgb, 255:red, 0; green, 0; blue, 0 }  ,draw opacity=1 ] (83,93) .. controls (83,81.95) and (91.73,73) .. (102.5,73) .. controls (113.27,73) and (122,81.95) .. (122,93) .. controls (122,104.05) and (113.27,113) .. (102.5,113) .. controls (91.73,113) and (83,104.05) .. (83,93) -- cycle ;
\draw  [color={rgb, 255:red, 0; green, 0; blue, 0 }  ,draw opacity=1 ] (180,261) .. controls (180,249.95) and (188.73,241) .. (199.5,241) .. controls (210.27,241) and (219,249.95) .. (219,261) .. controls (219,272.05) and (210.27,281) .. (199.5,281) .. controls (188.73,281) and (180,272.05) .. (180,261) -- cycle ;
\draw  [color={rgb, 255:red, 0; green, 0; blue, 0 }  ,draw opacity=1 ] (29.8,193.18) .. controls (40.56,183.37) and (63.61,191.14) .. (81.28,210.53) .. controls (98.95,229.92) and (104.55,253.59) .. (93.78,263.4) .. controls (83.02,273.21) and (59.97,265.44) .. (42.3,246.05) .. controls (24.63,226.66) and (19.03,202.99) .. (29.8,193.18) -- cycle ;

\draw (168,82) node [anchor=north west][inner sep=0.75pt]   [align=left] {$\displaystyle +$}; 
\draw (215,117) node [anchor=north west][inner sep=0.75pt]   [align=left] {$\displaystyle -$};
\draw (189,248) node [anchor=north west][inner sep=0.75pt]   [align=left] {$\displaystyle -$};
\draw (52,200) node [anchor=north west][inner sep=0.75pt]   [align=left] {$\displaystyle +$};
\draw (71,233) node [anchor=north west][inner sep=0.75pt]   [align=left] {$\displaystyle -$};
\draw (93,90) node [anchor=north west][inner sep=0.75pt]   [align=left] {$\displaystyle +$};
\draw (167,73) node [anchor=north west][inner sep=0.75pt]  [font=\footnotesize] [align=left] {$\displaystyle v_{1}$};
\draw (91,79) node [anchor=north west][inner sep=0.75pt]  [font=\footnotesize] [align=left] {$\displaystyle v_{2}$};
\draw (37,201) node [anchor=north west][inner sep=0.75pt]  [font=\footnotesize] [align=left] {$\displaystyle v_{3}$};
\draw (57,233) node [anchor=north west][inner sep=0.75pt]  [font=\footnotesize] [align=left] {$\displaystyle v_{4}$};
\draw (189,259) node [anchor=north west][inner sep=0.75pt]  [font=\footnotesize] [align=left] {$\displaystyle v_{5}$};
\draw (229,119) node [anchor=north west][inner sep=0.75pt]  [font=\footnotesize] [align=left] {$\displaystyle v_{6}$};
\draw (22,244) node [anchor=north west][inner sep=0.75pt]  [font=\footnotesize] [align=left] {$\displaystyle U_{3}$};
\draw (223,78) node [anchor=north west][inner sep=0.75pt]  [font=\footnotesize] [align=left] {$\displaystyle U_{1}$};
\draw (62,87) node [anchor=north west][inner sep=0.75pt]  [font=\footnotesize] [align=left] {$\displaystyle U_{2}$};
\draw (162,263) node [anchor=north west][inner sep=0.75pt]  [font=\footnotesize] [align=left] {$\displaystyle U_{2}$};

\end{tikzpicture}
    \caption[example-drop-P0]{An example to show that \ref{P0} cannot be dropped from the conditions of  \Cref{thm:circle}}
    \label{fig:drop-p0-example}
\end{figure}

To show property~\ref{P1} cannot be dropped, let $V^{(1)}=\{v_1,v_2,v_3,v_4,v_5,v_6\}$ around a circle $C$ where $v_1,v_3,v_5$ have a negative sign and $v_2, v_4, v_6$ have a positive sign. Let $F^{(1)}=\{U_1,U_2\}$ where $U_1=\{v_1,v_2,v_3\}$ and $U_2=\{v_1,v_5,v_6\}$, as shown in figure~\ref{fig:drop-p1-example}. It can be readily checked that properties \ref{P0} and \ref{P2}-\ref{P4} hold but $F^{(1)}$ is not a crossing family. The only set of arcs from $V^{(1)-}$ to $V^{(1)+}$ such that both $U_1,U_2$ have an incoming arc, is the set $\{(v_5, v_2), (v_3,v_6)\}$, which is not outer-planar. Observe that we can extend this example for any number of vertices, by replacing vertex $v_1$ by any positive number of $-$ signed vertices, all belong to $U_1\cap{U_2}$, and/or replacing vertex $v_4$ by any positive number of $+$ signed vertices, all belong to $V\backslash(U_1\cup{U_2})$. 
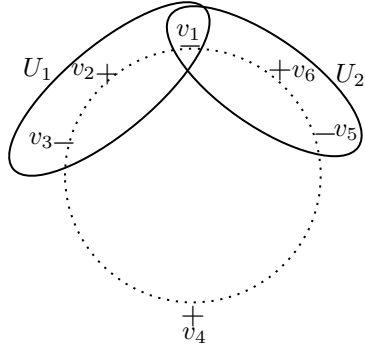
\begin{figure}
\centering
\tikzset{every picture/.style={line width=0.75pt}} 

\begin{tikzpicture}[x=0.75pt,y=0.75pt,yscale=-0.75,xscale=0.75]

\draw  [dash pattern={on 0.84pt off 2.51pt}] (401,151.91) .. controls (401,104.92) and (439.1,66.82) .. (486.09,66.82) .. controls (533.08,66.82) and (571.18,104.92) .. (571.18,151.91) .. controls (571.18,198.9) and (533.08,237) .. (486.09,237) .. controls (439.1,237) and (401,198.9) .. (401,151.91) -- cycle ;
\draw  [color={rgb, 255:red, 0; green, 0; blue, 0 }  ,draw opacity=1 ] (471.06,44.49) .. controls (480.44,31.2) and (516.06,40.21) .. (550.6,64.6) .. controls (585.15,89) and (605.55,119.55) .. (596.16,132.84) .. controls (586.78,146.13) and (551.16,137.13) .. (516.62,112.73) .. controls (482.07,88.33) and (461.67,57.78) .. (471.06,44.49) -- cycle ;
\draw  [color={rgb, 255:red, 0; green, 0; blue, 0 }  ,draw opacity=1 ] (494.48,40.19) .. controls (504.9,52.6) and (484.71,86.69) .. (449.39,116.34) .. controls (414.07,145.98) and (376.99,159.96) .. (366.58,147.55) .. controls (356.17,135.15) and (376.36,101.06) .. (411.68,71.41) .. controls (447,41.77) and (484.07,27.79) .. (494.48,40.19) -- cycle ;

\draw (534,72) node [anchor=north west][inner sep=0.75pt]   [align=left] {$\displaystyle +$};
\draw (564,115) node [anchor=north west][inner sep=0.75pt]   [align=left] {$\displaystyle -$};
\draw (474,56) node [anchor=north west][inner sep=0.75pt]   [align=left] {$\displaystyle -$};
\draw (476,237) node [anchor=north west][inner sep=0.75pt]   [align=left] {$\displaystyle +$};
\draw (390,121) node [anchor=north west][inner sep=0.75pt]   [align=left] {$\displaystyle -$};
\draw (419,75) node [anchor=north west][inner sep=0.75pt]   [align=left] {$\displaystyle +$};
\draw (473,49) node [anchor=north west][inner sep=0.75pt]  [font=\footnotesize] [align=left] {$\displaystyle v_{1}$};
\draw (403,75) node [anchor=north west][inner sep=0.75pt]  [font=\footnotesize] [align=left] {$\displaystyle v_{2}$};
\draw (375,121) node [anchor=north west][inner sep=0.75pt]  [font=\footnotesize] [align=left] {$\displaystyle v_{3}$};
\draw (477,252) node [anchor=north west][inner sep=0.75pt]  [font=\footnotesize] [align=left] {$\displaystyle v_{4}$};
\draw (578,116) node [anchor=north west][inner sep=0.75pt]  [font=\footnotesize] [align=left] {$\displaystyle v_{5}$};
\draw (550,76) node [anchor=north west][inner sep=0.75pt]  [font=\footnotesize] [align=left] {$\displaystyle v_{6}$};
\draw (579,75) node [anchor=north west][inner sep=0.75pt]  [font=\footnotesize] [align=left] {$\displaystyle U_{2}$};
\draw (371,71) node [anchor=north west][inner sep=0.75pt]  [font=\footnotesize] [align=left] {$\displaystyle U_{1}$};

\end{tikzpicture}
    \caption[example-drop-P1]{An example to show that \ref{P1} cannot be dropped from the conditions of  \Cref{thm:circle}}
    \label{fig:drop-p1-example}
\end{figure}

To show property~\ref{P3} cannot be dropped, we give multiple examples. For the first example, let $V^{(3)}=\{v_1,v_2,v_3,v_4,v_5\}$ around a circle $C$ where $v_1,v_5$ have negative sign and others have a positive sign. Let $F^{(3)}=\{U_1,U_2\}$ where $U_1=\{v_1,v_2\}, U_2=\{v_4,v_5\}$, as shown in figure~\ref{fig:drop-p3-example}. It can be readily checked that all properties except \ref{P3} hold for this example. Observe that the only set of arcs from $V^{(3)-}$ to $V^{(3)+}$ that both $U_1,U_2$ have an incoming arc, is the set $\{(v_1,v_4), (v_5,v_2)\}$, which is not outer-planar. 
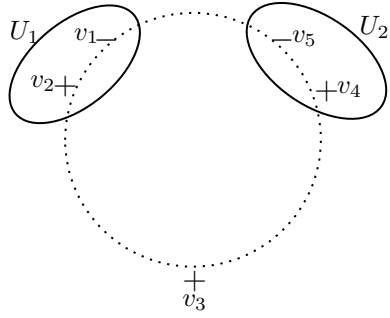
\begin{figure}
\centering
\tikzset{every picture/.style={line width=0.75pt}} 

\begin{tikzpicture}[x=0.75pt,y=0.75pt,yscale=-0.75,xscale=0.75]

\draw  [dash pattern={on 0.84pt off 2.51pt}] (100,139.91) .. controls (100,92.92) and (138.1,54.82) .. (185.09,54.82) .. controls (232.08,54.82) and (270.18,92.92) .. (270.18,139.91) .. controls (270.18,186.9) and (232.08,225) .. (185.09,225) .. controls (138.1,225) and (100,186.9) .. (100,139.91) -- cycle ;
\draw  [color={rgb, 255:red, 0; green, 0; blue, 0 }  ,draw opacity=1 ] (224.06,56.49) .. controls (233.44,43.2) and (260.39,46.09) .. (284.25,62.94) .. controls (308.11,79.79) and (319.85,104.22) .. (310.46,117.51) .. controls (301.08,130.8) and (274.13,127.91) .. (250.27,111.06) .. controls (226.41,94.21) and (214.67,69.78) .. (224.06,56.49) -- cycle ;
\draw  [color={rgb, 255:red, 0; green, 0; blue, 0 }  ,draw opacity=1 ] (145.79,56.18) .. controls (155.77,68.08) and (146.21,92.53) .. (124.44,110.81) .. controls (102.67,129.08) and (76.93,134.25) .. (66.95,122.36) .. controls (56.97,110.47) and (66.52,86.01) .. (88.29,67.74) .. controls (110.06,49.46) and (135.8,44.29) .. (145.79,56.18) -- cycle ;

\draw (264,98) node [anchor=north west][inner sep=0.75pt]   [align=left] {$\displaystyle +$};
\draw (234,64) node [anchor=north west][inner sep=0.75pt]   [align=left] {$\displaystyle -$};
\draw (117,64) node [anchor=north west][inner sep=0.75pt]   [align=left] {$\displaystyle -$};
\draw (90,95) node [anchor=north west][inner sep=0.75pt]   [align=left] {$\displaystyle +$};
\draw (104,63) node [anchor=north west][inner sep=0.75pt]  [font=\footnotesize] [align=left] {$\displaystyle v_{1}$};
\draw (75,95) node [anchor=north west][inner sep=0.75pt]  [font=\footnotesize] [align=left] {$\displaystyle v_{2}$};
\draw (176,242) node [anchor=north west][inner sep=0.75pt]  [font=\footnotesize] [align=left] {$\displaystyle v_{3}$};
\draw (279,99) node [anchor=north west][inner sep=0.75pt]  [font=\footnotesize] [align=left] {$\displaystyle v_{4}$};
\draw (249,65) node [anchor=north west][inner sep=0.75pt]  [font=\footnotesize] [align=left] {$\displaystyle v_{5}$};
\draw (61,58) node [anchor=north west][inner sep=0.75pt]  [font=\footnotesize] [align=left] {$\displaystyle U_{1}$};
\draw (294,55) node [anchor=north west][inner sep=0.75pt]  [font=\footnotesize] [align=left] {$\displaystyle U_{2}$};
\draw (176,226) node [anchor=north west][inner sep=0.75pt]   [align=left] {$\displaystyle +$};

\end{tikzpicture}
    \caption[example-drop-P3]{An example to show that \ref{P3} cannot be dropped from the conditions of \Cref{thm:circle}}
    \label{fig:drop-p3-example}
\end{figure}

The reader might think that the problem with this example is that the vertices can be decomposed into two intervals around the circle, one with all the negative signed vertices  $[v_5,v_1]$ and one with all the positive signed vertices $[v_2,v_3,v_4]$, and think that is the reason preventing us to make the desired outer-planar arc-set. Let us give another example, and in fact an infinite series of examples, that the vertices cannot be decomposed into such two intervals while all properties except \ref{P3} hold and still we don't have the desired arc-set.

To do so, this time let $V^{(3)}$ have exactly $k+1$ negative signed vertices $m_0,m_1,...,m_k$ for any $k\ge{2}$. Suppose the order of these vertices around the circle is $m_0,m_1,...,m_k$ counter clockwise. Going around the circle counter clockwise, suppose $p_0$ is a positive signed vertex immediately before $m_0$ and $p_k\neq{p_0}$ is a positive signed vertex immediately after $m_k$. Suppose in the interval from $p_k$ to $m_1$, there is at least one positive signed vertex other than $p_0,p_k$. For any $1\le{i}\le{k-1}$, put an arbitrary number, possibly $0$, of positive signed vertices in the interval going from $m_i$ to $m_{i+1}$ counter clockwise. Let $U_0=\{m_0,p_0\}$, and for $i\in\{1,2,...,k\}$ let $U_i$ contain the vertices from $m_i$ to $p_k$, both inclusive and going counter clockwise. Finally $F^{(3)}=\{U_0, U_1, ..., U_k\}$. Figure~\ref{fig:drop-p3-example-2} shows the general example on the left, as well as the ones with minimum number of vertices next to it.  \\
Observe that since there is at least one positive signed vertex other than $p_k,p_0$ going counter clockwise from $p_k$ to $m_1$, then $U_0\cup{U_1}\neq{V^{(3)}}$, and in fact there are no two sets whose their union is $V^{(3)}$, and hence \ref{P4} holds. The reader can check that the example has all the other properties except \ref{P3}. We try to construct the desired arc-set. Observe that for the set $U_0$, the arc must come from vertex $m_j$ for some $j\in\{1, 2, ..., k\}$ to $p_0$. However, then the set $U_j$ cannot receive any arc without crossing the arc $(m_j,p_0)$. 
Therefore, there is no outer-planar arc-set from $V^{(3)-}$ to $V^{(3)+}$ for this example such that every set in $F^{(3)}$ receives an arc.

\begin{figure}
\centering
\input{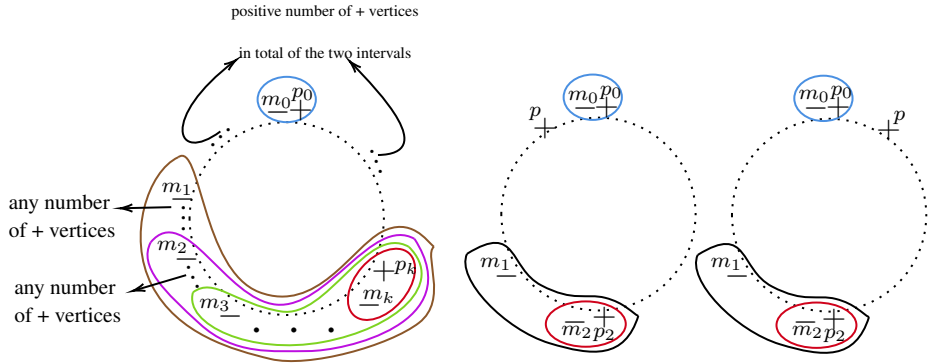}
    \caption[example-drop-P3-2]{On the left, An infinite series of examples to show that \ref{P3} cannot be dropped from conditions of \Cref{thm:circle}. Middle and right pictures show the example for $k=2$ with minimum number of vertices (6). Their only difference is the position of $p$. }
    \label{fig:drop-p3-example-2}
\end{figure}

Finally, to show that the property~\ref{P4} cannot be dropped, take any example $(V^{(3)},\sigma^{(3)},F^{(3)})$ we mentioned for dropping~\ref{P3} and do the following: Keep the same vertex set, that is $V^{(4)}=V^{(3)}$, but switch the sign of every vertex, that is  $\sigma^{(4)}=\bar{\sigma}^{(3)}$. Furthermore, let $F^{(4)}=\{V^{(4)}\backslash{U}|U\in{F^{(3)}}\}$, that is the sets in $F^{(4)}$ come from complementing the sets in $F^{(3)}$. We leave this to the reader to show that the example $(V^{(4)},\sigma^{(4)}, F^{(4)})$ has all the properties except~\ref{P4} and there is no outer-planar arc-set from $V^{(4)-}$ to $V^{(4)+}$ that each set in $F^{(4)}$ gets an incoming arc. 

\subsection{Algorithm}\label{sec:circle-algorithm}
In the following, we provide a full overview of our algorithm to solve the circle problem. We provide the lemmas (and proofs) that justify the different reduction steps our algorithm performs in~\Cref{sec:circle-lemmas}. \Cref{thm:circle} is proved in~\Cref{proof-of-circle-thm}. This algorithm can be executed in $O(n^2m^2)$ where $n=|V|$ and $m=|F|$. \\
\textbf{Algorithm}\\
\textbf{Input:} an instance of the circle problem $(V,\sigma,F)$ around a circle $C$ \\
\textbf{Output:} an outer-planar arc subset $E$ from $V^-$ to $V^+$ that covers $F$
\begin{enumerate}[label=\arabic*.]
        \item\label{Stepadjacent}
        Let $E$ be the set of all the arcs between adjacent vertices with opposite signs, oriented from the $-$ signed vertex to the $+$ signed one. Moreover, 
        let $F_1\subseteq{F}$  be the sets not covered by $E$ and let $V_1=V$ (\Cref{lem:adjacent-arcs}). 
        \item\label{Stepminusminus} While there exists $W^*\in{F_1}$ with a $-$ signed end-vertex whose adjacent out-neighbor has a $+$ sign: \\
        \begin{enumerate}
            \item If $W^*$ is not a co-$2$-block, let $m$ be its $-$ signed end-vertex with
            $+$ signed  adjacent out-neighbor $p_1$. Let $p_1, p_2, ..., p_t$ for some $t\ge{1}$ be the vertices of the sign-block of $p_1$ in this order and let $m'$ be the $-$ signed vertex adjacent to $p_t$  (\Cref{lem:ending-minus-out-neighbor} and figure~\ref{fig:alg-steps-combined}.a). Then update: $E \leftarrow E\cup\{(m, p_i)\mid i\in[t]\}\cup\{(m',p_t)\}$, $F_1\leftarrow\{U\in{F_1}\mid U\text{ not covered by }E\}$, $V_1\leftarrow{V_1\backslash\{p_1,p_2,...p_t\}}$ and $F_1\leftarrow F_1[V_1]$. 
            \item If $W^*$ is a co-$2$-block, let $m_1$ be its $-$ signed end-vertex with $+$ signed adjacent out-neighbor $p$. Let $m_1, m_2, ...,m_\ell$ for some $\ell\ge{1}$ be the vertices of the sign-block of $m_1$ in this order and let $p'$ be the $+$ signed vertex adjacent to $m_\ell$ (\Cref{lem:co-2-block-ending-minus} and figure~\ref{fig:alg-steps-combined}.b). Then update: $E\leftarrow E\cup\{(m_i, p)|i\in[\ell]\}\cup\{(m_\ell,p')\}$, $F_1\leftarrow\{U\in{F_1}|U\text{ not covered by }E\}$, $V_1\leftarrow V_1 \backslash \{m_1, m_2,...m_\ell\}$ and $F_1\leftarrow F_1[V_1]$.  
        \end{enumerate}
        \item\label{step-mmpp-all} If consecutive vertices $m_2,m_1,p_1,p_2\in{V_1}$ with this order on $C$ satisfy $\sigma(m_1)=\sigma(m_2)=-$ and $\sigma(p_1)=\sigma(p_2)=+$, there are three cases: 
        \begin{enumerate}
            \item\label{removablecase} One of $m_1$ or $p_1$ is \textit{removable}, i.e., either there are no sets $U,W\in{F_1}$, possibly equal, such that $m_1$ is the only negative vertex of $V_1\backslash{(U\cup{W})}$, or there are no sets $U,W\in{F_1}$, possibly equal, such that $p_1$ is the only positive vertex of $U\cap{W}$. Let $u\in\{m_1,p_1\}$ be the removable vertex. Update $V_1\leftarrow{V_1\backslash\{u\}}$ and $F_1\leftarrow{F_1[V_1]}$. Add a new arc to $E$ between the new adjacent opposite signed vertices ($m_2, p_1$ if $u=m_1$ or $m_1,p_2$ if $u=p_1$) and remove the covered sets from $F_1$ (Lemmas~\ref{lem:m-m-p-p-set-set}, \ref{lem:m-m-p-p-pair-pair} and \ref{lem:m-m-p-p-simple}). 
            \item\label{twoblockcase} $\{m_1,p_1\}\in{F_1}$, and there exist disjoint sets $U,W\in{F_1}$ where $m_1$ is the only $-$ signed vertex in $V_1\backslash(U\cup{W})$, let $T$ be the set of all such pairs $(U,W)$.  
         For each pair exactly one set, say $W$, excludes $m_2$ (\Cref{lem:m-m-p-p-2-block}).  
        Let $p_1, p_2, ..., p_t$ for some $t\ge{2}$ be the vertices of the sign-block of $p_1$ in this order. Then some $i\in\{2,...,t\}$ satisfies $p_i\in{W}$ and $p_{i-1}\notin{W}$ (\Cref{lem:m-m-p-p-2-block}).  Denote that as $f(U,W)=i$. Choose the pair maximizing $f(U,W)$ with value $j$. See  figure~\ref{fig:alg-steps-combined}.c. Then update: $E\leftarrow E\cup\{(m_2,p_k)|k\in[j]\}\cup\{(m_1,p_1)\}$, $F_1\leftarrow\{U\in{F_1}|U\text{ not covered by }E\}$, $V_1\leftarrow V_1 \backslash \{m_1, p_1, p_2, ..., p_{j-1}\}$, and $F_1\leftarrow F_1[V_1]$.
        \item\label{cotwoblockcase} $V_1\backslash\{m_1,p_1\}\in{F}$, and there exists sets $U,W\in{F_1}$ such that $U\cup{W}=V_1$ and $p_1$ is the only $+$ signed vertex in $U\cap{W}$, let $T$ be the set of all such pairs $(U,W)$. For each pair exactly one set, say $W$ contains $p_2$ (\Cref{lem:m-m-p-p-co-2-block}). Let $m_\ell, m_{\ell-1}, ...,m_2, m_1$ for some $\ell\ge{2}$ be the vertices of the sign-block of $m_1$ in this order. Then some $i\in\{2, ..., \ell\}$ satisfies $m_{i-1}\in{W}$ and $m_{i}\notin{W}$ (\Cref{lem:m-m-p-p-co-2-block}). Denote that as $f(U,W)=i$. Choose the pair minimizing  $f(U,W)$ with value $j$. See figure~\ref{fig:alg-steps-combined}.d. Then update: $E\leftarrow E\cup\{(m_k,p_2)|k\in[j]\}\cup\{(m_1,p_1)\}$, $F_1\leftarrow\{U\in{F_1}|U\text{ not covered by }E\}$, $V_1\leftarrow V_1 \backslash \{p_1, m_1, m_2, ..., m_{j-1}\}$ and $F_1\leftarrow F_1[V_1]$. 
        \end{enumerate}
        Go to Step~\ref{Stepminusminus}.
        \item\label{step-mmpm} If consecutive vertices $m_2, m_1, p,m_3\in{V_1}$ with this order on $C$ satisfy $\sigma(m_1)=\sigma(m_2)=\sigma(m_3)=-$ and $\sigma(p)=+$, then $m_1$ is removable (\Cref{lem:m-m-p-m}). Update $V_1\leftarrow{V_1\backslash\{m_1\}}$, $E\leftarrow{E\cup\{(m_2, p)\}}$, $F_1\leftarrow{F_1[V_1]}$ and remove the covered sets from $F_1$. Go to Step~\ref{Stepminusminus}.
        \item\label{step-ppmp} If consecutive vertices $p_2, p_1, m, p_3\in{V_1}$ with this order on $C$ satisfy $\sigma(p_1)=\sigma(p_2)=\sigma(p_3)=+$ and $\sigma(m)=-$, then $p_1$ is removable (\Cref{lem:p-p-m-p}). Update $V_1\leftarrow{V_1\backslash\{p_1\}}$, $E\leftarrow{E\cup\{(m, p_2)\}}$, $F_1\leftarrow{F_1[V_1]}$ and remove the covered sets from $F_1$. Go to Step~\ref{Stepminusminus}. 
        \item\label{Stepfinal} Output $E$. 
    \end{enumerate} 
\begin{figure}[!hbt]
    \centering
    \input{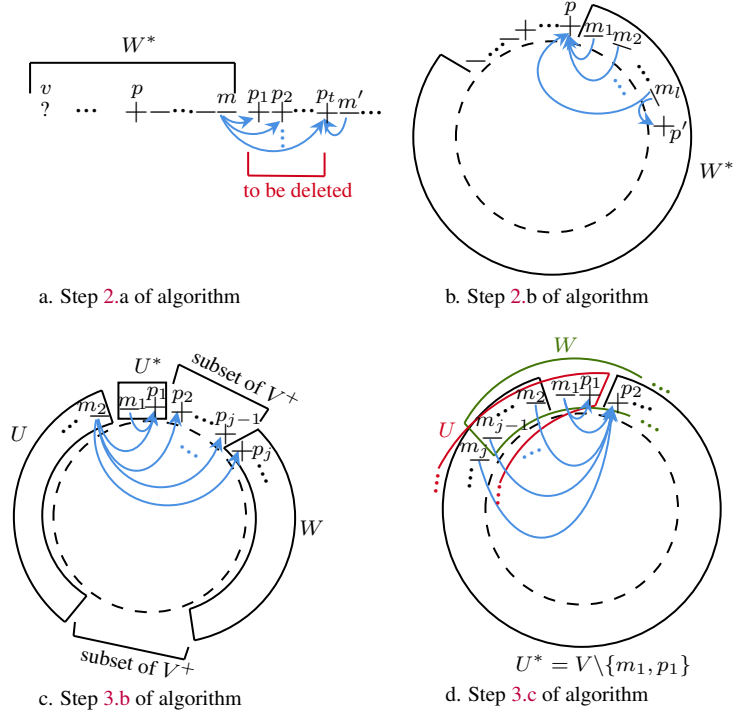}
    \caption[]{Steps of the Algorithm for \Cref{thm:circle}. 
    }
    \label{fig:alg-steps-combined}
\end{figure}
\paragraph{Running Time Analysis.} We analyze the running time based on $n=|V|$ and $m=|F|$, assuming $n<m$. Steps~\ref{Stepadjacent}, \ref{step-mmpm}, \ref{step-ppmp} takes $O(n+m)$ to find the desired consecutive vertices and to update the family $F$ or $F_1$. Step~\ref{Stepadjacent} preserves $V$, but Steps~\ref{step-mmpm} and \ref{step-ppmp} each remove one vertex, so they execute at most $n$ times. Therefore, the total time required for these three steps is $O(n(n+m))=O(nm)$. Step~\ref{Stepminusminus} takes $O(m+n)$ to find $W^*$, check if it is a co-$2$-block, and $O(m+n)$ to update $F$ and $V$. Since it removes at least one vertex, it executes at most $n$ times for the total time $O(n(n+m))=O(nm)$. For Step~\ref{step-mmpp-all}, it takes $O(n)$ to find the desired consecutive vertices. Then $O(m^2)$ to enumerate pairs of sets in the family, which finishes \ref{removablecase} or constructs $T$ for \ref{twoblockcase} and  \ref{cotwoblockcase}. Then it takes $|T|O(n)$ to find the pair with maximum/minimum $f$ where $|T|\le{m \choose 2}$; Hence it takes $O(nm^2)$ to finish any case of Step~\ref{step-mmpp-all}. Step~\ref{removablecase} removes one vertex, and Steps~\ref{twoblockcase},\ref{cotwoblockcase} remove at least two vertices $m_1,p_1$, hence Step~\ref{step-mmpp-all} can be executed at most $n$ times. Therefore, the total time required for this step is $O(n^2m^2)$.  Summing up, the algorithm can be executed in time $O(n^2m^2)$.

\subsection{Auxiliary Statements}\label{sec:circle-lemmas}
In this section, we state and prove the lemmas needed to prove the correctness of our algorithm to solve the circle problem and to derive \Cref{thm:circle}.

\begin{proposition}\label{rem:plus-in-intersection}
  Let $(V,\sigma,F)$ be an instance of the circle problem. Let $U,W\in{F}$ such that $U\cap{W}\neq{\emptyset}$. Then $(U\cap{W})^+\neq{\emptyset}$.   
\end{proposition}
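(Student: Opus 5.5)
The plan is a case split on whether $U$ and $W$ cross, using only properties \ref{P1}, \ref{P2} and \ref{P4} of a valid instance.

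\textbf{Case 1: $U\cup W\neq V$.} Since $U\cap W\neq\emptyset$ by hypothesis, the sets $U$ and $W$ cross. By \ref{P1}, $F$ is a crossing family, so $U\cap W\in F$. Applying \ref{P2} to the set $U\cap W$ gives a positive vertex in $U\cap W$, that is, $(U\cap W)^+\neq\emptyset$.

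\textbf{Case 2: $U\cup W=V$.} Here property \ref{P4} applies directly to the pair $U,W$ and yields a positive vertex in $U\cap W$.

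This also covers the degenerate case $U=W$. Then $U\cup W=U\neq V$ by \ref{P2}, so we are in Case 1, and \ref{P2} applied to $U$ itself gives the positive vertex.

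No step is a real obstacle. The only care needed is to check that the crossing hypothesis in \ref{P1} asks exactly for nonempty intersection and proper union, so that Case 1 applies, and that Case 2 is exactly the situation \ref{P4} is designed for.
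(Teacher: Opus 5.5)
Your proposal is correct and follows essentially the same argument as the paper: split on whether $U\cup W\neq V$ (use \ref{P1} to get $U\cap W\in F$, then \ref{P2}) or $U\cup W=V$ (apply \ref{P4} directly). Your remark about the degenerate case $U=W$ is a harmless extra check; it is already covered by the first case.
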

\begin{proof}
    If $U\cup{W}\neq{V}$, then by \ref{P1}, $U\cap{W}\in{F}$ and by \ref{P2}, $(U\cap{W})^+\neq{\emptyset}$. Otherwise, $U\cup{W}=V$ and by \ref{P4}, we have $(U\cap{W})^+\neq{\emptyset}$. 
\end{proof}

\begin{proposition}\label{rem:minus-out-union}
Let $(V,\sigma,F)$ be an instance of the circle problem. Let $U,W\in{F}$ such that $U\cup{W}\neq{V}$. Then $(U\cup{W})^{c-}\neq{\emptyset}$.
\end{proposition}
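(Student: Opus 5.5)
This is the dual of \Cref{rem:plus-in-intersection}, and I would prove it by the same two-case split, now according to whether $U$ and $W$ intersect. Let $U,W\in F$ with $U\cup W\neq V$.

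\textbf{Case $U\cap W\neq\emptyset$.} Together with $U\cup W\neq V$, this means $U$ and $W$ cross. By \ref{P1}, $U\cup W\in F$. Applying \ref{P2} to the set $U\cup W$ gives a negative vertex in $V\backslash(U\cup W)$, that is, $(U\cup W)^{c-}\neq\emptyset$.

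\textbf{Case $U\cap W=\emptyset$.} Property \ref{P3} applies directly and yields a negative vertex in $V\backslash(U\cup W)$.

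The case $U=W$ needs no separate treatment. Since $U\neq\emptyset$, it falls under the first case, where crossing gives $U\cup W=U\in F$; equally, \ref{P2} applies to $U$ directly.

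There is no real obstacle. The only thing to check is that the two cases are exhaustive and that the crossing hypothesis of \ref{P1} is met in the first case, which holds because both $U\cap W\neq\emptyset$ and $U\cup W\neq V$.

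Alternatively, one could apply \Cref{rem:plus-in-intersection} to the dual instance from \Cref{rem:dual}. Complements of $U$ and $W$ intersect exactly when $U\cup W\neq V$, and a positive vertex under $\bar\sigma$ is a negative vertex under $\sigma$. The direct argument above is simpler, so I would use that.
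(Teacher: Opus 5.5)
Your proposal is correct and matches the paper's proof: the same split on whether $U\cap W$ is empty, using \ref{P1} and then \ref{P2} when $U$ and $W$ cross, and \ref{P3} when they are disjoint. Your remarks on the case $U=W$ and on the duality alternative are also sound, though neither is needed.
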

\begin{proof}
    If $U\cap{W}\neq{\emptyset}$, then by \ref{P1}, $U\cup{W}\in{F}$ and by \ref{P2}, $(U\cup{W})^{c-}\neq{\emptyset}$. Otherwise, $U\cap{W}=\emptyset$ and by \ref{P3}, we have $(U\cup{W})^{c-}\neq{\emptyset}$. 
\end{proof}
\LemmaDeleteCoveredSets*
\begin{proof}
    Clearly $F'$ has properties \ref{P0}, \ref{P2}, \ref{P3} and \ref{P4}. So it remains to show that $F'$ is crossing. Let $U,W\in{F'}$ such that $U\cap{W}\neq\emptyset$ and $U\cup{W}\neq{V}$. Since $F$ is crossing, $U\cap{W}, U\cup{W}\in{F}$. Suppose $U\cap{W}\notin{F'}$. That means $U\cap{W}$ was covered by some arc $(m,p)\in{E}$ where $m\notin{(U\cap{W})}$ has a $-$ sign and $p\in{(U\cap{W})}$ has a $+$ sign. Then w.l.o.g. we may assume $m\notin{U}$. Since $p\in{U\cap{W}}\subseteq{U}$, then $U$ is also covered by $(m,p)\in{E}$ and hence $U\notin{F'}$, a contradiction. This proves $U\cap{W}\in{F'}$. \\
     Now, suppose $U\cup{W}\notin{F'}$. That means $U\cup{W}$ was covered by some arc $(m',p')\in{E}$ where $m'\notin{(U\cup{W})}$ has a $-$ sign and $p'\in{U\cup{W}}$ has a $+$ sign. Then w.l.o.g. we may assume $p'\in{W}$. Since $m'\notin{(U\cup{W})}$, we have $m'\notin{W}$. Hence $W$ is also covered by $(m',p')\in{E}$ and so $W\notin{F'}$, a contradiction. This shows $U\cup{W}\in{F'}$. \\
    As a consequence of the two arguments above, $F'$ is crossing.
\end{proof}

\begin{lemma}\label{lem:adjacent-arcs}
Let $(V,\sigma,F)$ be an instance of the circle problem. Let $A$ be the set of all directed arcs between adjacent opposite-signed vertices from the $-$ signed vertex to the $+$ signed one. Let $F'=F\backslash{F_A}$ consists of all the sets in $F$ that are not covered by $A$. Then $(V,\sigma,F')$ is an instance of the circle problem and if a set of outer-planar arcs $E'$ from $V^-$ to $V^+$ is a solution for $(V,\sigma,F')$, then $E'\cup{A}$ is a solution for $(V,\sigma,F)$. 
\end{lemma}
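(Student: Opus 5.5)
The lemma splits into two independent claims, and both follow quickly from what is already established. For the first claim, I will apply Lemma~\ref{lem:covering-arcs} with $E:=A$. By definition, $A$ is a set of arcs from $V^-$ to $V^+$, and $F'=F\backslash F_A$ is exactly the family of sets in $F$ not covered by $A$. Lemma~\ref{lem:covering-arcs} then says directly that $(V,\sigma,F')$ is a valid instance of the circle problem. Note that \ref{P0} and \ref{P2}--\ref{P4} are inherited by any subfamily, and crossing-closedness is exactly what that lemma handles.

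For the second claim, let $E'$ be a solution for $(V,\sigma,F')$, so $E'$ is outer-planar and covers $F'$. \emph{Covering:} every $U\in F$ lies either in $F'$, where it is covered by an arc of $E'$, or in $F_A$, where it is covered by an arc of $A$. Hence $E'\cup A$ covers $F$. \emph{Orientation:} all arcs of $E'\cup A$ go from $V^-$ to $V^+$.

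The remaining point, and the only one requiring an argument, is that $E'\cup A$ is outer-planar. Each arc of $A$ joins two vertices that are adjacent on the circle $C$, so it can be drawn as a chord hugging the boundary arc of $C$ between its endpoints. That boundary arc contains no other vertex of $V$.

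Two chords cross only if their endpoints interleave around the circle. An arc between two consecutive vertices $a,b$ cannot interleave with any chord $(x,y)$: interleaving would require exactly one of $x,y$ to lie strictly between $a$ and $b$ on one side, and there are no vertices there. Chords that share an endpoint, or coincide as unordered pairs, can be drawn without crossing. So I draw $E'$ in a planar way inside $C$ and then add the arcs of $A$ as thin chords very close to the boundary segments between consecutive vertices. These cannot meet any arc of $E'$, since every arc of $E'$ leaves the boundary only at vertices. A remark is needed for the degenerate case $|V|=2$, where the two boundary segments between the same pair of vertices both count as adjacencies; at most one arc (oriented $-\to+$) joins them, so planarity is trivial there.

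I expect no step to be a real obstacle. The mild care needed is in making the outer-planarity of the union rigorous, via the interleaving characterization of crossing chords (equivalently, by perturbing a planar drawing of $E'$), rather than in the covering or validity parts, which are immediate from Lemma~\ref{lem:covering-arcs}.
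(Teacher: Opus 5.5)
Your proposal is correct and follows the paper's proof. The paper likewise gets validity of $(V,\sigma,F')$ from Lemma~\ref{lem:covering-arcs} with $E:=A$, gets coverage of $F$ by splitting it into $F_A$ and $F'$, and gets outer-planarity because arcs in $A$ "do not separate $V$ around the circle"; your interleaving argument spells out that last one-line remark.
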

\begin{proof}
$(V,\sigma,F')$ is an instance of the circle problem as a direct consequence of \Cref{lem:covering-arcs}. Arcs in $A$ do not separate $V$ around the circle, therefore $E'\cup{A}$ is also outer-planar. Since $A$ covers $F_A$ and $E'$ covers $F'=F\backslash{F_A}$, then $E'\cup{A}$ covers $F$. 
\end{proof}

\begin{lemma}\label{lem:plus-end-plus-out}
    Let $(V,\sigma,F)$ be an instance of the circle problem. Then we may assume for any set $W\in{F}$ that ends with a $+$ signed vertex $p$, $p$'s adjacent out-neighbor(s) with respect to $W$ is also $+$ signed.
\end{lemma}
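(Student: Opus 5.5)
The "we may assume" should be read as a reduction, in the spirit of \Cref{lem:adjacent-arcs}. First apply \Cref{lem:adjacent-arcs}: add the set $A$ of all arcs between adjacent opposite-signed vertices, oriented from $-$ to $+$. The remaining family $F'=F\backslash F_A$ is again an instance of the circle problem by \Cref{lem:covering-arcs}. Moreover, any solution $E'$ of $(V,\sigma,F')$ extends to the solution $E'\cup A$ of $(V,\sigma,F)$. So it suffices to prove the claimed property for every $W\in F'$, i.e., for every set not covered by $A$.

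Take $W\in F'$ and suppose $W$ ends with a $+$ signed vertex $p$. By \ref{P2}, $W\neq V$, so $p$ has an adjacent out-neighbor $q\notin W$ next to $p$ on $C$. If $|W|\ge 2$, $q$ is the unique neighbor of $p$ on the side of the interval facing away from $W$. If $|W|=1$, both neighbors of $p$ are adjacent out-neighbors. Suppose some such $q$ has $\sigma(q)=-$. Then $p,q$ are adjacent with opposite signs, so $(q,p)\in A$. Since $q\notin W$ and $p\in W$, the arc $(q,p)$ covers $W$, i.e., $W\in F_A$. This contradicts $W\in F'$. Hence every adjacent out-neighbor of $p$ is $+$ signed.

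The only point needing care is the case $|W|=1$, where both circle neighbors are out-neighbors; the argument above handles each one separately and identically. A second subtlety is that when $|V|=2$ the two neighbors coincide, but this changes nothing. The statement is thus essentially immediate from \Cref{lem:adjacent-arcs}, and there is no real obstacle. It is worth remarking that the property is stable under the later steps of the algorithm as well: the algorithm always adds the arcs between newly adjacent opposite-signed vertices (as in Steps~\ref{step-mmpp-all}--\ref{step-ppmp}) and removes covered sets, so the same argument reapplies after each restriction $F_1[V_1]$.
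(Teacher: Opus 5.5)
Your proposal is correct and takes the same approach as the paper, whose proof is simply that this is ``a direct use of \Cref{lem:adjacent-arcs}.'' You spell out the step the paper leaves implicit: a set $W$ surviving that reduction cannot have a $+$ end vertex $p$ whose adjacent out-neighbor $q$ is $-$, because the arc $(q,p)$ would cover $W$. Your closing remark about stability under the later steps is not needed for this lemma, but it is consistent with the algorithm.
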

\begin{proof}
    This is a direct use of \Cref{lem:adjacent-arcs}. 
\end{proof}

\begin{definition}
We define an additional property for an instance $(V,\sigma,F)$ of the circle problem: 
\begin{enumerate}[leftmargin=3em, label={\bfseries AP\arabic*}, start=1]
    \item\label{AP1} For every $U\in{F}$ with a $+$ signed end vertex $v$, $v'$s adjacent out-neighbor(s) with respect to $U$ is $+$ signed.
\end{enumerate}
\end{definition}

\begin{lemma}\label{lem:P0-remains}
Let $(V,\sigma,F)$ be an instance of the circle problem. Let $V'$ be derived from $V$ by deleting a set of consecutive vertices and let $F'=F[V']$. Let $\sigma'=\sigma[V']$.Then \ref{P0} holds for $(V',\sigma',F')$. 
\end{lemma}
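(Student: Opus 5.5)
We need to show: if $V'$ is obtained from $V$ by deleting a set $D$ of consecutive vertices (an interval of the circle), then every nonempty set $U[V']=U\cap V'$ with $U\in F$ is an interval of consecutive vertices of $V'$ around the circle. On the reduced circle, the vertices of $V'$ keep their cyclic order, and two vertices of $V'$ are adjacent exactly when the open arc between them contains only vertices of $D$.

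The plan is to work with complements, which keeps the argument symmetric. A subset $S$ of a cycle is an interval (with $S\neq\emptyset$) if and only if its complement is either empty or an interval. Equivalently, $S$ is an interval if and only if $S$ is empty, all of the cycle, or the cycle splits into exactly two arcs, one equal to $S$ and the other to its complement. Fix $U\in F$ with $U\cap V'\neq\emptyset$. By \ref{P0}, $U$ is an interval of $V$; by \ref{P2}, $U\neq V$, so $U^c$ is also a nonempty interval. Then
$$U\cap V'=U\setminus D,\qquad V'\setminus U=U^c\setminus D.$$

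The key step is the following elementary fact about the circle: removing an interval $D$ from an interval $I$ leaves at most two pieces, and leaves two pieces only when $D$ lies strictly inside $I$ (that is, $D\subseteq I$ and $D$ contains neither end vertex of $I$). The argument splits into two cases.

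\textbf{Case 1: $D$ is not strictly inside $U$.} Then $U\setminus D$ is a single interval of $V$. Its vertices remain consecutive in $V'$, since deleting vertices can only make the remaining ones closer together. So $U\cap V'$ is an interval of $V'$.

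\textbf{Case 2: $D$ lies strictly inside $U$.} Then $D\cap U^c=\emptyset$, so $V'\setminus U=U^c$, which is a nonempty interval of $V$ and hence of $V'$. Its complement in $V'$, namely $U\cap V'$, is therefore an interval of the cycle $V'$. This is exactly the point of working with complements: in the $V'$ cycle, the two leftover pieces of $U$ glue together across the deleted block $D$.

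The only real subtlety, and the main obstacle, is handling this wrap-around correctly in Case 2. We also have to check the degenerate situations. If $D$ covers all of $U^c$, then $V'\subseteq U$ and $U\cap V'=V'$; we count the whole ground set as a (trivial) interval, consistent with how \ref{P0} is used. If $D$ covers all of $U$, then $U\cap V'=\emptyset$, and such sets are excluded from $F[V']$ by definition. With these checks made explicit, \ref{P0} holds for $(V',\sigma',F')$.
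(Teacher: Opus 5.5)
Your proof is correct and follows the paper's idea. The paper's proof only asserts that deleting a consecutive block keeps every set of $F'$ an interval; your case split supplies the justification it omits, in particular Case 2, where the two leftover pieces of $U$ glue together across $D$ on the reduced circle. (As an aside, the consecutiveness of $D$ is not needed for \ref{P0}: for any $V'\subseteq V$, the vertices of $V'$ met while traversing the arc $U$ are consecutive in the induced cyclic order on $V'$, so $U\cap V'$ is an interval directly, with no case analysis.)
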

\begin{proof}
    Since we are deleting a consecutive interval of vertices, every set in $F'$ remains an interval on the circle, therefore \ref{P0} holds for $(V',\sigma',F')$.
\end{proof}

\begin{lemma}\label{lem:crossing-family-subset}
Suppose $F$ is a crossing family on the ground set $V$. Then for any subset $V'\subseteq{V}$, $F[V']$ is also a crossing family. 
\end{lemma}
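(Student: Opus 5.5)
The plan is to verify the crossing property of $F[V']$ directly, by lifting any crossing pair in $F[V']$ to a crossing pair in $F$, uncrossing there, and restricting back to $V'$. The point is that intersection and union commute with restriction to $V'$: for any $U,W\subseteq V$ we have $(U\cap W)[V']=U[V']\cap W[V']$ and $(U\cup W)[V']=U[V']\cup W[V']$.

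\textbf{Lifting a crossing pair.} Take $A,B\in F[V']$ with $A\cap B\neq\emptyset$ and $A\cup B\neq V'$. By definition of the restriction, choose $U,W\in F$ with $A=U\cap V'$ and $B=W\cap V'$.

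\textbf{$U$ and $W$ cross in $V$.} Since $A\cap B\subseteq U\cap W$, we get $U\cap W\neq\emptyset$. Since $A\cup B\neq V'$, there is some $x\in V'\setminus(A\cup B)$. As $x\in V'$, $x\notin A$ and $x\notin B$, we get $x\notin U$ and $x\notin W$. Hence $x\notin U\cup W$, so $U\cup W\neq V$. Because $F$ is crossing, $U\cap W\in F$ and $U\cup W\in F$.

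\textbf{Restricting back.} We have $(U\cap W)\cap V'=A\cap B\neq\emptyset$, so $A\cap B\in F[V']$. Likewise $(U\cup W)\cap V'=A\cup B$, which is nonempty because it contains $A\cap B$, so $A\cup B\in F[V']$. Thus $F[V']$ is closed under intersection and union of crossing pairs, which is the claim.

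The only delicate point is the formal requirement $F[V']\subseteq 2^{V'}\setminus\{\emptyset,V'\}$. Emptiness is excluded by the definition of $F[V']$, but a restriction $U\cap V'$ could equal $V'$. I would handle this with a remark: the lemma is about the closure property, which the argument above establishes. In the intended applications $V'$ is the reduced vertex set of the algorithm, and the other properties (e.g.\ \ref{P2}) are checked separately there, which rules out $V'\in F[V']$. I do not expect any genuine obstacle beyond this bookkeeping.
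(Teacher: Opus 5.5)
Your proof is correct and is exactly the routine verification the paper means when it calls the lemma ``clear by definition''. You lift a crossing pair of $F[V']$ to $U,W\in F$, observe that they cross in $V$, uncross there, and restrict back using $(U\cap W)\cap V'=A\cap B$ and $(U\cup W)\cap V'=A\cup B$. Your caveat that $V'$ itself might lie in $F[V']$ is a fair point that the paper also glosses over. It does not affect the closure property, and in the applications it is ruled out by the separate verification of \ref{P2}.
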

\begin{proof}
    This is clear by definition of crossing families. 
\end{proof}

\begin{lemma}\label{lem:P0-P1-remains}
Let $(V,\sigma,F)$ be an instance of the circle problem. Let $V^d$ be a set of consecutive vertices and let $A$ be any set of arcs. Let $F'=(F\backslash{F_A})[V\backslash{V^d}]$. Then \ref{P0} and \ref{P1} hold for $F'$.   
\end{lemma}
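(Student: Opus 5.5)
This follows by composing three earlier results. Removing covered sets preserves validity, restricting a crossing family to a subset keeps it crossing, and deleting a consecutive interval keeps sets as intervals.

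First, by \Cref{lem:covering-arcs}, the triple $(V,\sigma,F\backslash F_A)$ is a valid instance of the circle problem. There is one subtlety. \Cref{lem:covering-arcs} is stated for arcs from $V^-$ to $V^+$, whereas here $A$ is "any set of arcs". I would handle this in one of two ways. If $A$ contains arcs not of that form, simply discard them; they cover nothing under the paper's definition of covering, which concerns arcs $(m,p)$ with $m$ negative and $p$ positive. Alternatively, re-run the short argument of \Cref{lem:covering-arcs} verbatim. That argument only uses that an arc covering $U\cap W$ or $U\cup W$ also covers $U$ or $W$, and this holds for any arc $(a,b)$ with the notion "$a\notin X$, $b\in X$". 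In either case $F\backslash F_A$ satisfies \ref{P0} and \ref{P1} on ground set $V$.

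Next, set $V'=V\backslash V^d$ and consider $F'=(F\backslash F_A)[V']$. For \ref{P1}, apply \Cref{lem:crossing-family-subset} to the crossing family $F\backslash F_A$ with the subset $V'$; this gives that $F'$ is crossing on $V'$.

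The one point to check here is the paper's convention for $F[V']$, which discards empty restrictions. I would verify directly that if $U[V'],W[V']$ cross in $V'$, then $U,W$ cross in $V$. A common element of $U[V'],W[V']$ lies in $U\cap W$, and an element of $V'$ outside $U[V']\cup W[V']$ lies outside $U\cup W$. Hence $U\cap W$ and $U\cup W$ are in $F\backslash F_A$, and their restrictions $(U\cap W)[V']=U[V']\cap W[V']$ and $(U\cup W)[V']=U[V']\cup W[V']$ are nonempty and not equal to $V'$, so they belong to $F'$.

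For \ref{P0}, I would argue as in \Cref{lem:P0-remains}. Each $U\in F\backslash F_A$ is an interval of the circle on $V$. Deleting the consecutive block $V^d$ from the cyclic order leaves $V'$ on a circle, and $U\cap V'$ is still a set of consecutive vertices in the induced cyclic order on $V'$. Indeed, any gap in $U\cap V'$ would have to come from an element of $V'\backslash U$ lying between two elements of $U$ along the arc that $U$ occupies, which is impossible. \Cref{lem:P0-remains} is stated for an instance, but its proof uses only \ref{P0}, so it applies to $F\backslash F_A$.

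No step is a real obstacle. The main care needed is that the cited lemmas are applied to $F\backslash F_A$, which is only guaranteed to be an instance via \Cref{lem:covering-arcs}, and that the empty-set convention in the restriction is handled as above.
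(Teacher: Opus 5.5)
Your proposal is correct and matches the paper's proof: \ref{P0} follows from \Cref{lem:P0-remains}, and \ref{P1} follows from combining \Cref{lem:covering-arcs} with \Cref{lem:crossing-family-subset}. The extra checks you add, namely the convention that $F[V']$ discards empty restrictions and the handling of arcs not directed from $V^-$ to $V^+$, are sound and make explicit what the paper's two-line proof leaves implicit.
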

\begin{proof}
    By \Cref{lem:P0-remains}, \ref{P0} holds for $F'$. Using Lemmas~\ref{lem:covering-arcs} and \ref{lem:crossing-family-subset}, $F'$ is a crossing family over the ground set $V\backslash{V^d}$. 
\end{proof}

\begin{lemma}\label{lem:union-covering}
  Let $(V,\sigma,F)$ be an instance of the circle problem. Let $V^d$ be a set of consecutive vertices and let $A$ be a set of arcs covering any set $W\in{F}$ with $W\subseteq{V^d}$. If $E$ covers $F'=(F\backslash{F_A})[V\backslash{V^d}]$, then $E\cup{A}$ covers $F$. 
\end{lemma}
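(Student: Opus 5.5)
We check directly that every $W\in F$ is covered by $E\cup A$, splitting according to whether $W$ is already covered by $A$, lies inside $V^d$, or meets $V\backslash V^d$.

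\emph{First case: $W\in F_A$.} Then $W$ is covered by $A\subseteq E\cup A$ and there is nothing to show.

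\emph{Second case: $W\notin F_A$ and $W\subseteq V^d$.} This cannot happen, because by hypothesis $A$ covers every set of $F$ contained in $V^d$, so such a $W$ would lie in $F_A$.

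\emph{Main case: $W\notin F_A$ and $W\not\subseteq V^d$.} Then $W[V\backslash V^d]=W\cap(V\backslash V^d)$ is nonempty, so by the definition of restriction $W':=W[V\backslash V^d]\in F'$. Since $E$ covers $F'$, some arc $(m,p)\in E$ covers $W'$, meaning $m,p\in V\backslash V^d$, $\sigma(m)=-$, $\sigma(p)=+$, $m\notin W'$ and $p\in W'$.

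The arc $(m,p)$ also covers $W$ in the original ground set. Indeed, $p\in W'\subseteq W$. Moreover $m\in V\backslash V^d$ and $m\notin W'=W\cap (V\backslash V^d)$, so $m\notin W$. Hence $W\in F_{E\cup A}$.

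The only point needing care is implicit in the setup: the arcs of $E$ join vertices of the reduced ground set $V\backslash V^d$, as they come from a solution on $F'$ over $V\backslash V^d$. This is exactly what lets us lift "$m\notin W'$" to "$m\notin W$". The consecutiveness of $V^d$ plays no role in the covering argument; it matters only for outer-planarity elsewhere.
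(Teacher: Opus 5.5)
Your proof is correct and follows the paper's argument essentially verbatim. Sets covered by $A$ are done; a set not covered by $A$ must meet $V\backslash V^d$, so its restriction lies in $F'$ and is covered by an arc of $E$, which then covers the original set. Your explicit remark that the arcs of $E$ have both endpoints in $V\backslash V^d$, which is what lifts $m\notin W'$ to $m\notin W$, is a point the paper leaves tacit, and it is the right justification.
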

\begin{proof}
    We need to show that $E\cup{A}$ covers any set $U\in{F}$. If $U$ is covered by some arc in $A$, then we are done. Otherwise, $U\in{F\backslash{F_A}}$. Since $U$ is not covered by $A$, then $U\backslash{V^d}\neq{\emptyset}$. Hence $U[V\backslash{V^d}]\in{F'}$ and is covered by an arc $e\in{E}$. Thus, $U$ is covered by $e$. 
    This shows that $E\cup{A}$ covers $F$. 
\end{proof}

\begin{lemma}\label{lem:ending-minus-out-neighbor}
Let $(V,\sigma,F)$ be an instance of the circle problem. Let $W^*\in{F}$ be a set with a $-$ signed end vertex $m$ whose adjacent out-neighbor $p_1$ with respect to $W^*$, is $+$ signed. Suppose $W^*$ is not a co-$2$-block. Traversing the vertices on the circle from $m$ to the next negative vertex $m'$ via $p_1$, let $p_2, ..., p_t$ be the sequence of positive vertices visited for some $t\ge 1$. Let $A=\{(m,p_i)|i\in[t]\}\cup\{(m',p_t)\}$. Let $V'=V\backslash\{p_1,p_2,..., p_t\}$ and $\sigma'=\sigma[V']$. Finally let $F'=(F\backslash{F_A})[V']$. Then $(V',\sigma',F')$ is an instance of the circle problem and if $E'$ is a solution for $(V',\sigma',F')$, then $E'\cup{A}$ is a solution for $(V,\sigma,F)$. 
\end{lemma}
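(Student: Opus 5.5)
The proof has two parts. First we show that $(V',\sigma',F')$ satisfies \ref{P0}--\ref{P4}. Then we show that gluing $A$ to any solution $E'$ gives a solution of the original instance.

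\textbf{The properties \ref{P0}, \ref{P1}.} The deleted set $V^d:=\{p_1,\dots,p_t\}$ is a consecutive block of vertices. Hence \Cref{lem:P0-P1-remains} gives \ref{P0} and \ref{P1} directly.

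\textbf{The properties \ref{P2}--\ref{P4}.} Since only positive vertices are deleted, every negative vertex survives. So the ``negative outside'' half of \ref{P2} and property \ref{P3} carry over from $F$, applied to the original sets $U,W\in F\setminus F_A$ whose restrictions we consider. The only danger is a set $U\in F\setminus F_A$ whose positive elements all lie in $V^d$, or a pair whose intersection has positive elements only in $V^d$. Similarly, \ref{P4} fails only when a pair $U[V'],W[V']$ covers $V'$ but the original pair did not cover $V$.

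The key observation is a description of the uncovered sets near $V^d$. An uncovered set $U$ containing some $p_i$ must contain $m$, since otherwise $(m,p_i)$ covers it. If it contains $p_t$, it must also contain $m'$. Using that $U$ is an interval, we will show:
\begin{itemize}
\item any uncovered $U$ meeting $V^d$ either contains all of $m,p_1,\dots,p_t,m'$, or is an interval starting at $m$ on that side and ending at some $p_i$ with $i<t$;
\item in the latter case $U\cap W^*=\{m\}$ or similar. Here we use \ref{P1} together with \Cref{rem:plus-in-intersection} and \Cref{rem:minus-out-union} to uncross $U$ with $W^*$ and derive a contradiction.
\end{itemize}
This is the step where the hypothesis that $W^*$ is not a co-$2$-block must enter. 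We expect it to rule out exactly the configuration in which $U\cup W^*$ or $U\cap W^*$ would need its required positive or negative element to lie inside $V^d$.

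In the first case $U$ still contains $m$ and $m'$, and we need a positive element of $U$ outside $V^d$. If none existed, $U=\{\dots,m,p_1,\dots,p_t,m',\dots\}$ with all other elements negative. Uncrossing $U$ with $W^*$ would then produce a set, or a pair, violating \ref{P2} or \Cref{rem:plus-in-intersection}. The same argument applied to pairs $U,W$ gives \ref{P4} and the intersection half of \ref{P2}. Deleting positive vertices cannot newly create $U\cup W=V'$ unless the missing elements were positives in $V^d$; we handle this by the same case analysis using $m\in U\cup W$.

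\textbf{Gluing.} Each set $W\in F$ with $W\subseteq V^d$ consists of positives only. Such a set excludes $m$, so it is covered by $(m,p_i)$ for any $p_i\in W$. Thus \Cref{lem:union-covering} shows that $E'\cup A$ covers $F$.

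For outer-planarity, the arcs of $A$ form a fan from $m$ to $p_1,\dots,p_t$ together with the arc $(m',p_t)$. All of these lie within the region bounded by the chord $mm'$ and the arc of the circle through $V^d$. Since $V^d$ is removed from $V'$, no arc of $E'$ has an endpoint strictly inside this region, so no arc of $E'$ crosses an arc of $A$. The arcs of $A$ pairwise do not cross, hence $E'\cup A$ is outer-planar.

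\textbf{Main obstacle.} We expect the hardest step to be verifying \ref{P2} and \ref{P4} for sets and pairs that lose all their positive intersection elements in $V^d$. This requires a careful uncrossing with $W^*$, and it is precisely where the non-co-$2$-block assumption on $W^*$ has to be used.
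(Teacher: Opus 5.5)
\textbf{There is a genuine gap, at \ref{P4}.} Several parts of your proposal are fine: \ref{P0}, \ref{P1}, the negative half of \ref{P2}, and the gluing step (your outer-planarity argument is in fact more explicit than the paper's ``by construction''). The positive half of \ref{P2} needs no case split. Any uncovered $U$ meeting $V^d$ contains $m$, so \Cref{rem:plus-in-intersection} applied to $U,W^*$ gives a positive in $U\cap W^*$, which avoids $V^d$ because $W^*\cap V^d=\emptyset$.

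The gap is the case $U\cup W=V$ with $\emptyset\neq(U\cap W)^+\subseteq V^d$. (The subcase $U\cup W\neq V$ is immediate from \Cref{rem:minus-out-union}.) This is the only place the non-co-$2$-block hypothesis is needed, and you dispose of it with ``the same argument applied to pairs.'' That argument does not transfer. Uncrossing $U$ and $W$ separately with $W^*$ gives a positive in $U\cap W^*$ and another in $W\cap W^*$, but nothing forces a common positive in $U\cap W$ outside $V^d$.

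Your second bullet also points the wrong way: the configuration you hope to rule out can occur. Take the circle $q,m,p_1,p_2,m',x,r$ with $q,p_1,p_2,r$ positive, $F=\{\{q,m\},\{q,m,p_1\}\}$ and $W^*=\{q,m\}$. This is a valid instance satisfying all the hypotheses, and $\{q,m,p_1\}$ is uncovered by $A$ and ends at $p_1$ with $1<t=2$. So no contradiction can come from that configuration alone.

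\textbf{The missing idea is an extremal choice followed by a double uncrossing.}
\begin{enumerate}
\item Replace $W^*$ by an inclusion-wise minimal set of $F$ that has $m$ as an end vertex and excludes $p_1$. This is harmless: its complement contains $W^{*c}$, so it is still not a co-$2$-block.
\item Take a bad pair $U,W$; both contain $m$. Let $p$ be the first positive vertex of $W^*$ beyond the negative block of $m$. Since $p\notin V^d$, we may assume $p\notin W$.
\item If $W\cup W^*\neq V$, then $W\cap W^*\in F$ is an interval through $m$ avoiding $p_1$ and $p$. It is therefore all negative, contradicting \ref{P2}. Hence $W\cup W^*=V$.
\item By \Cref{rem:plus-in-intersection} there is a positive $p'\in W\cap W^*$. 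Since $p'\notin V^d$, we get $p'\notin U$, so $U\cap W^*\subsetneq W^*$.
\item If $U\cup W^*\neq V$, then $U\cap W^*\in F$ is a strictly smaller set with end vertex $m$ excluding $p_1$, contradicting minimality. Hence $U\cup W^*=V$.
\item Now $W^{*c}\subseteq U\cap W$, which forces $(W^{*c})^+=V^d$. So $W^*$ is a co-$2$-block, the desired contradiction.
\end{enumerate}

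\textbf{A smaller point on \ref{P3}.} It does not simply ``carry over,'' because $C[V']\cap D[V']=\emptyset$ still allows $\emptyset\neq C\cap D\subseteq V^d$. In that case one of $C,D$ misses $m$ and contains some $p_k$, so it is covered by $(m,p_k)$. After that, \ref{P3} for $F$ applies.
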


\begin{proof}
Figure~\ref{fig:alg-steps-combined}.a  shows $W^*$ and $A$. W.l.o.g we may assume $W^*$ is the smallest set in $F$ with the end vertex $m$ such that it does not include $p_1$.  
Using \Cref{lem:P0-P1-remains}, properties \ref{P0} and \ref{P1} hold for $(V',\sigma',F')$. 

To show \ref{P2}, let $W\in{F}$ that is not covered by $A$ such that $W'=W[V']\in{F'}$. Since we are only deleting $+$ signed vertices from $V$, then $W'^{c-}=W^{c-}\neq{\emptyset}$. Now, for the sake of contradiction, assume $W'^+=\emptyset$. This means $W^+\subseteq\{p_1, p_2, ..., p_{t}\}$. Since $W$ is not covered by $A$, we have $m\in{W}$. Then $W\cap{W^*}\neq{\emptyset}$ and by \Cref{rem:plus-in-intersection}, there exists a $+$ signed vertex in $W\cap{W^*}$. This means that $W^*\cap\{p_1, p_2, ..., p_t\}\neq\emptyset$, a contradiction. This concludes \ref{P2} for $(V',\sigma',F')$. 

To show \ref{P3} for $F'$, let $C,D\in{F}$ that are not covered by $A$ such that $C'=C[V']\in{F'}, D'=D[V']\in{F'}$, and $C'\cap{D'}=\emptyset$. First suppose $C\cap{D}\neq{\emptyset}$. Since $C'\cap{D'}=\emptyset$, then $C\cap{D}\subseteq\{p_1,p_2, ..., p_{t}\}$. Thus $m\notin{C\cap{D}}$ and w.l.o.g we may assume $m\notin{C}$. Then $C$ would be covered by arc $(m, p_k)\in{A}$ for some $k\in[t]$ such that $p_k\in{C}$ which is not the case. Hence $C\cap{D}=\emptyset$. Then since we only deleted $+$ signed vertices from $V$, we have $(C'\cup{D'})^{c-}= (C\cup{D})^{c-}$ which is non-empty by \ref{P3} for $F$. This concludes \ref{P3} for $F'$.

To prove \ref{P4} for $F'$, let $U,W\in{F}$ that are not covered by $A$ such that $U'=U[V']\in{F'}, W'=W[V']\in{F'}$ and $U'\cup{W'}=V'$. First suppose $U\cup{W}\neq{V}$. Then $\emptyset\neq(U\cup{W})^c\subseteq\{p_1, p_2, ..., p_t\}$, a contradiction to \Cref{rem:minus-out-union}. Hence $U\cup{W}=V$. Now for the sake of contradiction, assume $(U'\cap{W'})^+=\emptyset$. Thus $(U\cap{W})^+\subseteq\{p_1, p_2, ..., p_t\}$ and is non-empty by \ref{P4} for $F$. Note that since $U,W$ are not covered by $A$, $m\in{U,W}$. 
Let $p$ be the first $+$ signed vertex inside $W^*$ after the sign-block of $m$. Note that $p$ exists by \ref{P2} for $F$. Since $(U\cap{W})^+\subseteq\{p_1, p_2, ..., p_t\}$, we may assume $p\notin{W}$. Note that $W\cap{W^*}$ is non-empty as $W,W^*$ include $m$. If $W\cup{W^*}\neq{V}$, since $F$ is a crossing family, $W\cap{W^*}\in{F}$ and forms an interval on the circle. Since $m\in{W,W^*}$ and $p\notin{W}$ and $p_1\notin{W^*}$, then $W\cap{W^*}\subseteq{\text{sign-block}[m]}$, a contradiction to \Cref{rem:plus-in-intersection}. Hence $W\cup{W^*}=V$. 

Since $W\cap{W^*}\neq{\emptyset}$ and by \Cref{rem:plus-in-intersection}, there exist a $+$ signed vertex $p'\notin\{p_1, p_2, ..., p_t\}$ such that $p'\in{W\cap{W^*}}$. Note that $p'\notin{U}$ because $(U\cap{W})^+\subseteq\{p_1, p_2, ..., p_t\}$. Since $p'\notin{U}$, $W^*$ is not a subset of $U$. Recall that $W^*\cap{U}\neq{\emptyset}$ as $U,W^*$ include $m$. Suppose $W^*\cup{U}\neq{V}$. Since $F$ is crossing, $W^*\cap{U}\in{F}$. Then $W^*\cap{U}$ is a smaller set compared to $W^*$ that contains $m$ and not $p_1$, a contradiction to the assumption we had for $W^*$ at the beginning of the proof. Therefore $W^*\cup{U}=V$. 

Let $v\neq{m}$ be the other end vertex of $W^*$. Let $I=W^{*c}$. Since $W^*\cup{U}=V$ and $W\cup{W^*}=V$, then $I\subset{W,U}$. Since $(W\cap{U})^+\subseteq\{p_1, p_2, ..., p_t\}$, we have $I^+=\{p_1, p_2, ..., p_t\}$. Hence $I$ consists only of two blocks: the plus signed block $\{p_1, p_2, ..., p_t\}$ and the sign-block of $m'$. This means $W^*$ is a co-$2$-block, a contradiction to the lemma's assumption. This concludes the proof of \ref{P4} for $F'$.  

We showed that $(V',\sigma',F')$ is also an instance of the circle problem. Then if $E'$ is a solution to $(V',\sigma',F')$, by construction $E'\cup{A}$ is outer-planar. 
For any set $W\in{F}$ such that $W\subseteq{V\backslash{V'}}$, $W$ is covered by an arc $(m, p_k)\in{A}$ for some $k\in[j]$ such that $p_k\in{W}$. Hence by \Cref{lem:union-covering}, $E'\cup{A}$ covers $F$. Therefore, $E'\cup{A}$ is a solution to $(V,\sigma, F)$. 
\end{proof}

\begin{lemma}\label{lem:co-2-block-ending-minus}
Let $(V,\sigma,F)$ be an instance of the circle problem with additional properties \ref{AP1}. Let $W^*\in{F}$ be a co-$2$-block ending with a $-$ signed vertex $m_1$. Suppose $m_1$'s adjacent out-neighbor with respect to $W^*$ is a $+$ signed vertex $p$. Let $m_1, m_2, ..., m_\ell\in{W^*}$ be the sign-block of $m_1$ and let $p'$ be the $+$ signed vertex next to $m_\ell$. Let $A=\{(m_i, p)|i\in[\ell]\}\cup{(m_\ell, p')}$. Let $V'=V\backslash\{m_1, m_2, ..., m_\ell\}$ and $\sigma'=\sigma[V']$. Finally let $F'=(F\backslash{F_A})[V']$. Then $(V',\sigma',F')$ is an instance of the circle problem and if $E'$ is a solution for $(V',\sigma',F')$, then $E'\cup{A}$ is a solution for $(V,\sigma,F)$. 
\end{lemma}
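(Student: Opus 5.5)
The plan is to mirror the proof of \Cref{lem:ending-minus-out-neighbor} with the roles of signs exchanged: this time only negative vertices $m_1,\dots,m_\ell$ are deleted. First I would fix the local picture. Since $W^*$ is a co-$2$-block and $p\notin W^*$ is adjacent to $m_1$, the complement $W^{*c}$ consists of a $+$ block starting at $p$ followed by a $-$ block. Inside $W^*$ we have the block $m_1,\dots,m_\ell$ followed by $p'$.

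\ref{P0} and \ref{P1} for $F'$ follow at once from \Cref{lem:P0-P1-remains}, because $\{m_1,\dots,m_\ell\}$ is consecutive. For the solution part:
\begin{itemize}
\item $E'\cup A$ is outer-planar, since all arcs of $A$ lie in the deleted interval together with its two neighbours $p,p'$.
\item No set $W\in F$ satisfies $W\subseteq\{m_1,\dots,m_\ell\}$, as such a set would have no positive element, contrary to \ref{P2}. Hence \Cref{lem:union-covering} applies vacuously and gives that $E'\cup A$ covers $F$.
\end{itemize}
So the work is in \ref{P2}--\ref{P4}.

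\textbf{\ref{P2}.} Deleting negatives leaves $W'^+=W^+$. Suppose $W'^{c-}=\emptyset$, i.e.\ $W^{c-}\subseteq\{m_1,\dots,m_\ell\}$, and pick some $m_i\notin W$. If $p\in W$, then $(m_i,p)$ covers $W$, which is excluded; so $p\notin W$. Then $p\notin W\cup W^*$, so by \Cref{rem:minus-out-union} there is a negative vertex outside $W\cup W^*$. That vertex lies outside $W^*$, hence is not one of the $m_j$, contradicting $W^{c-}\subseteq\{m_1,\dots,m_\ell\}$.

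The same observation is used repeatedly below: for an uncovered set $X$, either $p\notin X$, or $\{m_1,\dots,m_\ell\}\subseteq X$. Symmetrically, via the arc $(m_\ell,p')$, either $p'\notin X$ or $m_\ell\in X$.

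\textbf{\ref{P3}.} Take uncovered $C,D$ with $C'\cap D'=\emptyset$. If $C\cap D\neq\emptyset$, then $C\cap D\subseteq\{m_1,\dots,m_\ell\}$ has no positive element, contradicting \Cref{rem:plus-in-intersection}. So $C\cap D=\emptyset$. By \ref{P3} there is a negative vertex outside $C\cup D$, and I must show one exists outside $\{m_1,\dots,m_\ell\}$. Suppose instead that every negative vertex outside $C\cup D$ is some $m_i$. Then the $-$ block of $W^{*c}$ lies inside $C\cup D$, and $p\notin C,D$ by the observation above. Now compare $C$ (respectively $D$) with $W^*$ using crossing and \Cref{rem:minus-out-union}: the union $C\cup W^*$ still misses $p$, so a negative vertex must lie outside it. Because both sets are intervals on the circle, the co-$2$-block structure forces that negative vertex into the $-$ block of $W^{*c}$, which is covered by $C\cup D$. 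This should yield the contradiction; \ref{AP1} is used to pin down where the end vertices of $C$ and $D$ can sit next to the $m$-block.

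\textbf{\ref{P4}.} Take uncovered $U,W$ with $U'\cup W'=V'$. Since positives are untouched, $(U'\cap W')^+=(U\cap W)^+$. If $U\cap W\ne\emptyset$, \Cref{rem:plus-in-intersection} gives a positive element there. If $U\cap W=\emptyset$, then $U\cup W\ne V$ and all negatives outside $U\cup W$ are among the $m_i$. The covering observation then forces $p\notin U\cup W$, which contradicts $U'\cup W'=V'$ because $p\in V'$.

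I expect \ref{P3} to be the main obstacle: it is the only place where the co-$2$-block structure of $W^*$ and \ref{AP1} are really needed. If the direct interval argument stalls, I would first choose $W^*$ extremal (for instance, the largest set with end vertex $m_1$ excluding $p$), as was done in \Cref{lem:ending-minus-out-neighbor}, and derive a contradiction from $C\cup W^*$ or $D\cup W^*$ being a larger such set.
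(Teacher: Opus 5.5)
\textbf{There is a genuine gap at \ref{P3}.} Your arguments for \ref{P0}, \ref{P1}, \ref{P2}, \ref{P4} and for the solution part are correct. But \ref{P3}, which you rightly identify as the crux, is not proved, and the sketched argument cannot close it. From $p\notin C\cup W^*$ and \Cref{rem:minus-out-union} you only obtain a negative vertex outside $C\cup W^*$. It lies in the $-$ block of $W^{*c}$, hence in $D$. Symmetrically, there is a negative vertex in $C\setminus(D\cup W^*)$. Both facts are perfectly consistent with your standing assumption $(C\cup D)^{c-}\subseteq\{m_1,\dots,m_\ell\}$, so no contradiction arises. Also, \ref{AP1} plays no role in locating the end vertices of $C$ and $D$.

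\textbf{The missing idea is that $W^*$ is not a $2$-block.} Otherwise its other end vertex $v$ is positive. Since $W^{*c}$ is a $+$ block starting at $p$ followed by a $-$ block, the out-neighbor of $v$ would then be negative, contradicting \ref{AP1}. This is the only use of \ref{AP1} and of the co-$2$-block hypothesis.

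With this fact your fallback can be completed, though maximality alone does not suffice.
\begin{itemize}
\item Replace $W^*$ by the largest set of $F$ with end vertex $m_1$ excluding $p$. These sets are nested, so the new $W^*$ contains the old one and is still not a $2$-block (it may stop being a co-$2$-block, which no longer matters). The sets $A$, $V'$, $F'$ are unchanged.
\item Let $q$ be the first negative vertex met walking from $p$ away from $m_1$. Then $q\in W^{*c}$, so $q\in C\cup D$; say $q\in D$.
\item If $D\cap W^*\neq\emptyset$, then $D\cup W^*\in F$ (both sets miss $p$). Its complement is an interval containing $p$ but neither $m_1$ nor $q$, hence entirely positive, contradicting \ref{P2}. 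So $D\cap W^*=\emptyset$.
\item \Cref{rem:minus-out-union} now gives a negative vertex $r\notin D\cup W^*$, which must lie in $C$. Thus $C\not\subseteq W^*$. If $C\cap W^*\neq\emptyset$, then $C\cup W^*$ would be a strictly larger set of $F$ with end vertex $m_1$ excluding $p$, contradicting maximality. So $C\cap W^*=\emptyset$.
\item Hence $W^*\subseteq(C\cup D)^c$, so $W^{*-}\subseteq\{m_1,\dots,m_\ell\}$ and $W^*$ is a $2$-block, a contradiction.
\end{itemize}
Note that the contradiction for $D$ comes from \ref{P2}, not from maximality, so ``a larger such set'' alone would not have sufficed.

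\textbf{The paper's shortcut.} The paper avoids redoing all this. Once $W^*$ is not a $2$-block, $W^{*c}\in\bar F$ is not a co-$2$-block in the dual instance $(V,\bar\sigma,\bar F)$. There its end vertex $p$ is negative with positive out-neighbor $m_1$. So the lemma is exactly \Cref{lem:ending-minus-out-neighbor} applied to this dual instance, and reversing the arcs it produces gives $A$. Your \ref{P2}, \ref{P3}, \ref{P4} are the duals of that lemma's \ref{P2}, \ref{P4}, \ref{P3}.
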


\begin{proof}
Figure~\ref{fig:alg-steps-combined}.b shows $W^*$ and $A$. We first show that $W^*$ is not a $2$-block. Suppose otherwise and let $v\neq{m_1}$ be the other end vertex of $W^*$. Then $v$ has a $+$ sign. Since $W^*$ is a co-$2$-block and $p$, the adjacent out-neighbor of $m_1$ with respect to $W^*$, has a $+$ sign, then the adjacent out-neighbor of $v$ with respect to $W^*$ has a $-$ sign, a contradiction to property \ref{AP1}. Hence $W^*$ is not a $2$-block and so $W^{*c}$ is not a co-$2$-block.  Then the lemma is a consequence of applying \Cref{lem:ending-minus-out-neighbor} for the dual instance $(V, \bar{\sigma}, \bar{F})$ and set $W^{*c}\in{\bar{F}}$. 
\end{proof}

\begin{lemma}\label{lem:ends-minus-minus-out}
Let $(V,\sigma,F)$ be an instance of the circle problem with additional property \ref{AP1}. Then we may assume for any set $W\in{F}$ that ends with a $-$ signed vertex $m$, $m$'s adjacent out-neighbor with respect to $W$ is also $-$ signed.
\end{lemma}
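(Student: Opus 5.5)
This is a "we may assume" statement, in the same style as \Cref{lem:plus-end-plus-out}. The plan is to show that whenever some $W\in F$ has a $-$ signed end vertex $m$ whose adjacent out-neighbor is $+$ signed, we can reduce to a strictly smaller valid instance in which \ref{AP1} still holds (after re-applying \Cref{lem:adjacent-arcs}). A solution of the smaller instance then lifts to one of the original instance. This is exactly the loop in Step~\ref{Stepminusminus} of the algorithm. We argue by induction on $|V|$: once the loop can no longer fire, every set ending in a $-$ vertex has a $-$ signed adjacent out-neighbor, which is what the statement asserts.

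\textbf{Case split.} Given such a $W^*$ and $m$, we split on whether $W^*$ is a co-$2$-block.
\begin{itemize}
\item If $W^*$ is not a co-$2$-block, we invoke \Cref{lem:ending-minus-out-neighbor}. It deletes the sign-block $\{p_1,\dots,p_t\}$ of $p_1$, adds the arcs $A$, and returns a valid instance $(V',\sigma',F')$ such that any solution $E'$ of it yields the solution $E'\cup A$ of the original instance.
\item If $W^*$ is a co-$2$-block, we first use \Cref{lem:plus-end-plus-out} (that is, \Cref{lem:adjacent-arcs}) to ensure that \ref{AP1} holds. We then invoke \Cref{lem:co-2-block-ending-minus}, which deletes the sign-block of $m$ and again gives a valid reduced instance with the same lifting property.
\end{itemize}
In both cases at least one vertex is removed, so $|V'|<|V|$.

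\textbf{Restoring \ref{AP1} and applying induction.} Deleting vertices may create new adjacencies between opposite-signed vertices, and so may destroy \ref{AP1}. We therefore apply \Cref{lem:adjacent-arcs} to the reduced instance, which removes all sets covered by adjacent opposite-sign arcs. This does not change $V'$, and outer-planarity is preserved because arcs between adjacent vertices never cross anything. After this step \ref{AP1} holds again. Since $|V'|<|V|$, induction lets us assume the desired property on the reduced instance, and we chain the solution-lifting statements back to $(V,\sigma,F)$.

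\textbf{Main obstacle: outer-planarity of the combination.} The delicate point is checking that $E'\cup A$ remains outer-planar when $E'$ lives on $V'$. The arcs of $A$ join $m$ (or $p$) to a deleted consecutive block and to one endpoint across it. Any arc of $E'$ has both endpoints in $V'$, so it cannot separate the deleted block from its two surviving neighbours. This is the claim "by construction $E'\cup A$ is outer-planar" already asserted in the cited lemmas, so we only need to confirm that it composes correctly across repeated reductions and the interleaved applications of \Cref{lem:adjacent-arcs}.

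The process terminates because each reduction strictly shrinks $V$. When it stops, no $W\in F$ has a $-$ end vertex with a $+$ adjacent out-neighbor, so the assumption in the statement is justified.
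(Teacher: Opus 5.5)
Your proposal is correct and follows the paper's proof, which splits on whether $W$ is a co-$2$-block and invokes \Cref{lem:ending-minus-out-neighbor} or \Cref{lem:co-2-block-ending-minus} accordingly. The bookkeeping you add is exactly what the paper leaves implicit in its minimal-counterexample argument for \Cref{thm:circle}: re-establishing \ref{AP1} via \Cref{lem:adjacent-arcs}, termination because $|V|$ strictly drops, and composing outer-planarity across reductions. Your preliminary appeal to \Cref{lem:plus-end-plus-out} in the co-$2$-block case is redundant, since \ref{AP1} is already a hypothesis.
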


\begin{proof}
This is a consequence of applying \Cref{lem:ending-minus-out-neighbor} if $W$ is not a co-$2$-block and \Cref{lem:co-2-block-ending-minus} otherwise. 
\end{proof}

\begin{definition} We define the following additional property for an instance $(V,\sigma,F)$ of the circle problem:
\begin{enumerate}[leftmargin=3em, label={\bfseries AP\arabic*}, start=2]
    \item\label{AP2} For every $U\in{F}$ with a $-$ signed end vertex $v$, $v'$s adjacent out-neighbor(s) with respect to $U$ is $-$ signed.
\end{enumerate} 
\end{definition}

\begin{definition}\label{dangerous-sets}
Let $(V,\sigma,F)$ be an instance of the circle problem.
For a $+$ signed vertex $v\in{V}$, we say $U\in{F}$ is a \normalfont{$v$-dangerous-set} if $U^+=\{v\}$. Analogously, for a $-$ signed vertex $u\in{V}$, $W\in{F}$ is a $u$-dangerous-set if $W^{c-}=\{u\}$. 
\end{definition}

\begin{definition}\label{dangerous-pairs}
Let $(V,\sigma,F)$ be an instance of the circle problem. For a $+$ signed vertex $v\in{V}$, we say $U,W\in{F}$ is a \normalfont{$v$-dangerous-pair} if $U\cup{W}=V$ and $(U\cap{W})^+=\{v\}$. \\
Moreover, for a $-$ signed vertex $u\in{V}$. $A,B\in{F}$ is a $u$-dangerous-pair if $U\cap{W}=\emptyset$ and $(A\cup{B})^{c-}=\{u\}$. 
\end{definition}

Observe that dangerous sets are related to property \ref{P2} and dangerous pairs are corresponding to properties \ref{P3} and \ref{P4}. 

\begin{remark}\label{rem:deletable-vertex}
Let $(V,\sigma,F)$ be an instance of the circle problem. Let $v\in{V}$ such that there is no $v$-dangerous set or pair in $F$. Then $(V\backslash\{v\},\sigma[V\backslash\{v\}], F[V\backslash\{v\}])$ is an instance of the circle problem and if $E$ covers $F[V\backslash\{v\}]$, $E$ covers $F$.     
\end{remark}

\begin{definition}\label{deletable-vertex}
For an instance of the circle problem $(V,\sigma,F)$, we call $v\in{V}$ \normalfont{removable} if there is no $v$-dangerous set or pair in $F$. 
\end{definition}

\begin{lemma}\label{lem:m-m-p-p-set-set}
    Let $(V,\sigma,F)$ be an instance of the circle problem with additional property \ref{AP1}. Suppose four consecutive vertices $m_2,m_1,p_1,p_2$ have signs $-, -,$\\$ +, +$ respectively. Then there do not exist sets $U_1, U_2\in{F}$ such that $U_1$ is a $m_1$-dangerous set and $U_2$ is a $p_1$-dangerous set.
\end{lemma}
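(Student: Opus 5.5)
The plan is to derive a contradiction from the existence of both sets, using the circle order $m_2,m_1,p_1,p_2$, \Cref{rem:minus-out-union}, \Cref{rem:plus-in-intersection} and property \ref{AP1}. Suppose $U_1\in F$ is an $m_1$-dangerous set, so $U_1^{c-}=\{m_1\}$, and $U_2\in F$ is a $p_1$-dangerous set, so $U_2^+=\{p_1\}$.

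First I record the membership of the four named vertices:
\begin{itemize}
\item Since $U_1^{c-}=\{m_1\}$, the set $U_1$ contains every negative vertex except $m_1$. In particular $m_2\in U_1$ and $m_1\notin U_1$.
\item Since $U_2^+=\{p_1\}$, we have $p_1\in U_2$ and $p_2\notin U_2$.
\item By \ref{P2}, $U_2^{c-}\neq\emptyset$, but I will not need this.
\end{itemize}

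Next I show $U_1\cup U_2=V$. If $m_1\notin U_2$, then $m_1\in(U_1\cup U_2)^c$, so the union is not $V$; otherwise $U_1\cup U_2$ contains every negative vertex. In the first case I argue directly: $U_2$ is an interval containing $p_1$ but neither $p_2$ nor $m_1$, so $U_2=\{p_1\}$. Then $p_1$ is an end vertex of $U_2$ whose adjacent out-neighbor $m_1$ is negative, contradicting \ref{AP1}. Hence $m_1\in U_2$, and $U_1\cup U_2$ contains all negative vertices. If $U_1\cup U_2\neq V$, \Cref{rem:minus-out-union} gives a negative vertex outside the union, which is impossible. So $U_1\cup U_2=V$.

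Then I locate $p_1$ in the intersection. The sets $U_1$ and $U_2$ cannot be disjoint, because by \ref{P4} their intersection must contain a positive vertex. So $U_1\cap U_2\neq\emptyset$, and \Cref{rem:plus-in-intersection} gives $\emptyset\neq(U_1\cap U_2)^+\subseteq U_2^+=\{p_1\}$. Thus $p_1\in U_1$.

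Finally, $U_1$ is an interval containing $p_1$ but not its neighbor $m_1$. Hence $p_1$ is a $+$ signed end vertex of $U_1$ whose adjacent out-neighbor $m_1$ is $-$ signed, contradicting \ref{AP1}.

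The only delicate points are the two places where \ref{AP1} is applied: ruling out $U_2=\{p_1\}$, and the final step. In each case I must check that the relevant vertex really is an end vertex, which follows from the interval property \ref{P0} together with the consecutive order $m_2,m_1,p_1,p_2$ on $C$.
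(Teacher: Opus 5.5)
Your proof is correct and uses the same ingredients as the paper's: \ref{AP1} forces $m_1\in U_2$ and forbids $p_1\in U_1$, so $U_1\cup U_2\supseteq V^-$, and the crossing/\ref{P3}/\ref{P4} consequences then give the contradiction. The only difference is the case split: the paper derives $p_1\notin U_1$ first and splits on whether $U_1\cap U_2=\emptyset$ (decided by $m_2\in U_2$), using \ref{P3} or \Cref{rem:plus-in-intersection}, whereas you split on whether $U_1\cup U_2=V$ via \Cref{rem:minus-out-union} and \ref{P4}, and apply \ref{AP1} to $U_1$ only at the end.
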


\begin{proof}
    Suppose otherwise. Then $U_1^{c-}=\{m_1\}$ and by \ref{AP1}, $p_1\notin{U_1}$. Moreover, $U_2^+=\{p_1\}$, and so $p_2\notin{U_2}$ and by \ref{AP1}, $m_1\in{U_2}$. If $m_2\notin{U_2}$, then $U_1\cap{U_2}=\emptyset$, and since $U_1\cup{U_2}$ covers all $-$ signed vertices, hence $(U_1\cup{U_2})^c\subset{V^+}$, a contradiction to property \ref{P3} of $F$. Therefore, we have $m_2\in{U_2}$. Then $U_1\cap{U_2}\neq{\emptyset}$. Hence by \Cref{rem:plus-in-intersection}, there exist a $+$ signed vertex $v\in{U_1\cap{U_2}}$. However, since $p_1\notin{U_1}$, we have a plus signed $v\neq{p_1}\in{U_2}$, a contradiction.
\end{proof}

\begin{lemma}\label{lem:m-m-p-p-pair-pair}
    Let $(V,\sigma,F)$ be an instance of the circle problem with additional property \ref{AP1}. Suppose four consecutive vertices $m_2, m_1, p_1, p_2$ have signs $-,-,$\\$+,+$ respectively. Then there do not exist sets $W_1, W_2, U_1, U_2\in{F}$ such that $(W_1, W_2)$ is a $m_1$-dangerous pair and $(U_1,U_2)$ is a $p_1$-dangerous pair.
\end{lemma}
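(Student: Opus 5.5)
The plan is to argue by contradiction. Suppose $(W_1,W_2)$ is an $m_1$-dangerous pair and $(U_1,U_2)$ is a $p_1$-dangerous pair. We first pin down the shape of these four intervals near $m_1,p_1$, and then combine one $U_i$ with one $W_j$ via crossing to produce a pair violating \Cref{rem:minus-out-union} (or, dually, \Cref{rem:plus-in-intersection}). This mirrors the proof of \Cref{lem:m-m-p-p-set-set}.

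\emph{Shape of the sets.} Since $W_1\cap W_2=\emptyset$ and $(W_1\cup W_2)^{c-}=\{m_1\}$, the negative vertex $m_2$ lies in one of them, say $m_2\in W_1$. As $m_1\notin W_1$, the vertex $m_2$ is an end vertex of $W_1$ with adjacent out-neighbor $m_1$. Moreover $W_1\neq V\backslash\{m_1\}$, since otherwise $W_2$ would have to be empty. Hence $W_1$ avoids $p_1$ as well, so $W_1$ is an interval starting at $m_2$ and running away from $m_1$, and $W_2$ lies in the remaining arc.

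Dually, $p_1\in U_1\cap U_2$, and $p_2\notin U_1\cap U_2$, say $p_2\notin U_1$. Then $p_1$ is an end vertex of $U_1$ with out-neighbor $p_2$, which is consistent with \ref{AP1}. Also $|U_1|\ge 2$: if $U_1=\{p_1\}$, then $U_2\supseteq V\backslash\{p_1\}$ and $p_1\in U_2$ would give $U_2=V$. So $U_1$ contains $m_1$ and extends from $p_1$ in the direction of $m_1,m_2$. Consequently $U_1$ contains every negative vertex: $m_1\in U_1$, and the other negatives lie in $W_1\cup W_2$, which we try to control.

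\emph{Combining.} First consider $X:=U_1\cup W_1$. Since $m_2\in U_1\cap W_1$ (if $U_1$ reaches past $m_1$; otherwise handle $U_1\cap W_1=\emptyset$ separately using \ref{P3}), \Cref{rem:minus-out-union} or \ref{P1} gives that either $X=V$ or $X\in F$. In the latter case, look at $X$ together with $W_2$. Every negative vertex lies in $X\cup W_2$, because $m_1\in U_1$. So \Cref{rem:minus-out-union} forces $X\cup W_2=V$. Thus in all cases we reach a situation where $U_1\cup W_1\cup W_2=V$, that is, $U_1\supseteq (W_1\cup W_2)^c$, which contains $m_1$ and possibly some positives.

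Now run the symmetric argument on the $U$-side. Intersect $U_2$ with $W_1$ or $W_2$, using \Cref{rem:plus-in-intersection}: any nonempty intersection of $U_2$ with a set of $F$ contained in the relevant arc must contain a positive vertex other than $p_1$. Since $p_1\notin W_1$, this positive vertex also lies outside $U_1$ or inside $U_1\cap U_2$. In the latter case it contradicts $(U_1\cap U_2)^+=\{p_1\}$.

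\emph{Main obstacle.} The hard part is the interval bookkeeping in this final step. One has to track which of $W_1,W_2$ meets $U_2$, and where the second component of $U_1\cap U_2$ (two covering arcs can overlap in two pieces) sits relative to $W_1$ and $W_2$. I expect a short case split on whether $W_2$ meets $U_1\cap U_2$. If it does, apply \Cref{rem:plus-in-intersection} to $W_2$ and $U_1\cap U_2$, after first checking via \ref{P1} that $U_1\cap U_2$ may be treated through the pair $U_1,U_2$. If it does not, then $W_2\subseteq V\backslash U_1$ or $W_2\subseteq V\backslash U_2$. Combining this with $U_1\cup W_1\cup W_2=V$ then yields a pair of disjoint sets whose union misses only positive vertices, contradicting \ref{P3}.
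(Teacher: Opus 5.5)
Your proposal has a genuine gap. The first phase is on the right track, but you stop one step short of the fact the whole argument turns on, and the second phase does not work as sketched.

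\textbf{First phase.} Using $m_2\in U_1\cap W_1$ and the fact that all negative vertices lie in $(U_1\cup W_1)\cup W_2$, you correctly reach $U_1\cup W_1\cup W_2=V$. What you do not extract is the following.
Note first that $W_2$ avoids $W_1$, $m_1$, and (by \ref{AP1}) also $p_1$. In particular $p_2\notin W_1$: otherwise $W_1=V\backslash\{m_1,p_1\}$ leaves no room for $W_2$. So $X=U_1\cup W_1\neq V$ and $X\in F$.
If $X$ were disjoint from $W_2$, \ref{P3} would already be violated, so $U_1\cap W_2\neq\emptyset$.
The arcs $U_1\backslash\{m_1,p_1\}$ and $W_1$ both end at $m_2$ and run away from $m_1$, so they are nested. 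Since $W_2$ meets $U_1$ but avoids $W_1\cup\{m_1,p_1\}$, this forces $W_1\subseteq U_1$.
Hence $U_1\cup W_2=V$, and so $p_2\in W_2\cap U_2$. Your proposal never establishes $W_1\subseteq U_1$.
A smaller point: the degenerate case $U_1\cap W_1=\emptyset$, i.e.\ $U_1=\{m_1,p_1\}$, is not settled by \ref{P3}. It is excluded because \ref{AP1} forces $m_1\in U_2$, whence $U_2=V$.

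\textbf{Second phase.} You rightly flag this as the hard part, but the sketch has two invalid steps.
You cannot apply \Cref{rem:plus-in-intersection} to $W_2$ and $U_1\cap U_2$. Since $U_1\cup U_2=V$, the sets $U_1,U_2$ do not cross, \ref{P1} gives nothing, and $U_1\cap U_2$ need not belong to $F$.
Nor does $W_2\cap U_1\cap U_2=\emptyset$ imply $W_2\subseteq V\backslash U_1$ or $W_2\subseteq V\backslash U_2$. Here $U_1\cap U_2$ can be a single arc around $m_1,p_1$, with $V\backslash U_1$ and $V\backslash U_2$ adjacent. An interval like $W_2$, which contains $p_2\in U_2\backslash U_1$ and also meets $U_1$, can then straddle both complements while missing $U_1\cap U_2$.
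The disjoint pair that is supposed to contradict \ref{P3} is also never identified.

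\textbf{The missing idea} is to cross $U_2$ with $W_2$.
By \ref{P2}, pick a positive $p\in W_1$. Then $p\neq p_1$ and $p\in U_1$, so $p\notin U_2$; also $p\notin W_2$. Thus $U_2\cup W_2\neq V$, and together with $p_2\in U_2\cap W_2$ this gives $U_2\cup W_2\in F$.
If $U_2\cap W_1\neq\emptyset$, \Cref{rem:plus-in-intersection} yields a positive vertex of $U_2\cap W_1\subseteq U_1\cap U_2$ other than $p_1$, a contradiction.
Otherwise $U_2\cup W_2$ and $W_1$ are disjoint. Their union contains $W_1\cup W_2\cup\{m_1\}$ (as $m_1\in U_2$ by \ref{AP1}), hence every negative vertex, contradicting \ref{P3}.
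This is the route the paper's proof takes.
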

\begin{proof}
Suppose otherwise. In particular, $W_1\cap{W_2}=\emptyset$ and $(W_1\cup{W_2})^{c-}=\{m_1\}$. Then we may assume $m_2\in{W_1}$ and by \ref{AP1}, we have $p_1\notin{W_1, W_2}$. If $p_2\in{W_1}$, since $W_1$ is an interval on the circle not including $\{m_1, p_1\}$, we have $W_1=V\backslash\{m_1, p_1\}$ and since $m_1, p_1\notin{W_2}$, we get $W_2\subseteq{W_1}$, a contradiction to $W_1\cap{W_2}=\emptyset$. Hence $p_2\notin{W_1}$.

We have $U_1,U_2$ as a $p_1$-dangerous pair, i.e. $U_1\cup{U_2}=V$ and $(U_1\cap{U_2})^+=\{p_1\}$. By \ref{AP1}, we have $m_1\in{U_1, U_2}$. 
Then we may assume $p_2\notin{U_1}$ and $p_2\in{U_2}$. 
Since $p_2\notin{W_1}$, we have $U_1\cup{W_1}\neq{V}$. If $m_2\notin{U_1}$, then $U_1=\{m_1, p_1\}$ and since $U_1\cup{U_2}=V$ and $m_1, p_1\in{U_2}$, we get $U_2=V$, a contradiction to \ref{P2}. Hence $m_2\in{U_1}$
and so $W_1\cap{U_1}\neq{\emptyset}$. Thus, since $F$ is crossing, $W_1\cup{U_1}\in{F}$. 

First assume $W_1$ is not a subset of $U_1$. Then $W_1\cup{U_1}=W_1\cup\{m_1, p_1\}$ as they are both intervals. Since $W_2\cap{W_1}=\emptyset$ and $m_1, p_1\notin{W_2}$, we have $W_2\cap{(W_1\cup{U_1})}=\emptyset$. However, $(W_2\cup{W_1\cup{U_1}})^c=(W_2\cup{W_1\cup\{m_1,p_1\}})^c\subset{V^+}$ because $(W_1\cup{W_2})^{c-}=\{m_1\}$, a contradiction to \ref{P3} for $W_2\in{F}$ and $W_1\cup{U_1}\in{F}$. This means $W_1\subset{U_1}$. 

Since $W_1\subset{U_1}$, we have $(W_2\cup{U_1})^c\subseteq{(W_2\cup{W_1})^c\backslash\{m_1, p_1\}}\subset{V^+}$; thus by \ref{P3}, $W_2\cap{U_1}\neq{\emptyset}$ and by \Cref{rem:minus-out-union}, $W_2\cup{U_1}=V$. Therefore, since $p_2\notin{U_1}$, we have $p_2\in{W_2}$ and since $p_2\in{U_2}$, we get $U_2\cap{W_2}\neq{\emptyset}$. 

By \ref{P2}, there exist a $+$ signed vertex $p\in{W_1}$ and since $p_1\notin{W_1}$, $p\neq{p_1}$. Since $W_1\subset{U_1}$, we have $p\in{U_1}$ and since $(U_1\cap{U_2})^+=\{p_1\}$, we have $p\notin{U_2}$. As $p\in{W_1}$ and $W_1\cap{W_2}=\emptyset$, we have $p\notin{W_2}$ and hence $U_2\cup{W_2}\neq{V}$. 

Since $F$ is crossing, $U_2\cup{W_2}\in{F}$. Now suppose $(U_2\cup{W_2})\cap{W_1}\neq{\emptyset}$. Then by \Cref{rem:plus-in-intersection}, there exist a $+$ signed vertex $p'\in{(U_2\cup{W_2})\cap{W_1}}$ and since $p_1\notin{W_1}$, we have $p'\neq{p_1}$. Since $W_1\cap{W_2}=\emptyset$, $p'\in{U_2}$. Moreover, $p'\in{W_1}\subset{U_1}$. Hence $p'\in{U_1\cap{U_2}}$, a contradiction to $(U_1\cap{U_2})^+=\{p_1\}$. Therefore we have $(U_2\cup{W_2})\cap{W_1}={\emptyset}$. However, $((U_2\cup{W_2})\cup{W_1})^c\subseteq{(W_1\cup{W_2}\cup\{m_1, p_1\})^c\subset{V^+}}$, a contradiction to \ref{P3}. As a result, an $m_1$-dangerous pair and an $p_1$-dangerous pair cannot exist simultaneously. 
\end{proof}

\begin{lemma}\label{lem:m-m-p-p-simple}
    Let $(V,\sigma,F)$ be an instance of the circle problem with additional property \ref{AP1}. Suppose four consecutive vertices $m_2,m_1, p_1,p_2$ have signs $-, -,$\\$+, +$ respectively, and $\{m_1,p_1\}, \{m_1, p_1\}^c\notin{F}$. Then at least one of $m_1$ or $p_1$ is removable.
\end{lemma}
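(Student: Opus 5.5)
The plan is to argue by contradiction. Suppose neither $m_1$ nor $p_1$ is removable. By Definition~\ref{deletable-vertex}, $F$ then contains an $m_1$-dangerous set or an $m_1$-dangerous pair, and also a $p_1$-dangerous set or a $p_1$-dangerous pair. Lemma~\ref{lem:m-m-p-p-set-set} excludes the combination set/set, and Lemma~\ref{lem:m-m-p-p-pair-pair} excludes pair/pair. Two mixed cases remain:
\begin{itemize}
\item[(i)] an $m_1$-dangerous set $U$ together with a $p_1$-dangerous pair $(W_1,W_2)$;
\item[(ii)] an $m_1$-dangerous pair together with a $p_1$-dangerous set.
\end{itemize}
I would treat (i) in detail and then handle (ii) symmetrically.

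\textbf{Case (i).} The first step collects positional facts, as in the proofs of Lemmas~\ref{lem:m-m-p-p-set-set} and~\ref{lem:m-m-p-p-pair-pair}.
\begin{itemize}
\item From $U^{c-}=\{m_1\}$ we get $m_1\notin U$. If $p_1\in U$, then $p_1$ would be an end vertex of $U$ whose out-neighbour $m_1$ is negative, violating \ref{AP1}. Hence $p_1\notin U$, and since $U$ contains every other negative vertex, $m_2\in U$.
\item For the pair we have $W_1\cup W_2=V$ and $(W_1\cap W_2)^+=\{p_1\}$. Then $p_2$ lies in exactly one of the two sets, say $p_2\in W_2$ and $p_2\notin W_1$. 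So $p_1$ is an end vertex of $W_1$, and \ref{AP1} forces $m_1\in W_1$.
\item If $m_2\notin W_1$, then $W_1=\{m_1,p_1\}$, contradicting the hypothesis $\{m_1,p_1\}\notin F$. Hence $m_2\in W_1$, so $U\cap W_1\neq\emptyset$.
\end{itemize}

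The second step uncrosses. Since $p_2\in W_2$, $p_2\notin W_1$ and $p_1\in W_1\cap W_2$, we have $(W_1\cap W_2)^+=\{p_1\}$. If $U\cup W_1\neq V$, then \ref{P1} gives $U\cap W_1\in F$. This set is an interval avoiding $p_1$ and containing $m_2$. By Proposition~\ref{rem:plus-in-intersection} it contains a positive vertex $q\neq p_1$, and $q\notin W_2$.

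I would then look at $W_2$ together with $U$ or with $U\cap W_1$:
\begin{itemize}
\item If these sets cross, the intersection is a nonempty set of $F$ (or falls under \ref{P4}), so it has a positive element. That element lies in $W_1\cap W_2$ and differs from $p_1$, a contradiction.
\item If they do not cross, the union misses only positive vertices or $m_1$. Then \ref{P3} or Proposition~\ref{rem:minus-out-union} yields a contradiction, using that $U$ already contains every negative vertex except $m_1$ and that $m_1\in W_1\cap W_2$.
\end{itemize}
The subcase $U\cup W_1=V$ is handled similarly: then $W_1\supseteq U^c\ni m_1,p_1$, and one applies \ref{P4} to $U$ and $W_1$ and compares with $W_2$. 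Throughout, the hypothesis $V\setminus\{m_1,p_1\}\notin F$ should be what excludes the degenerate possibility $U=V\setminus\{m_1,p_1\}$, in which $U$ and $W_2$ fail to interact properly.

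\textbf{Case (ii).} Here I would mirror the argument, exchanging the roles of intersections and unions and of \ref{P3} and \ref{P4}. I would redo this case directly rather than invoke Remark~\ref{rem:dual}, because \ref{AP1} is not obviously preserved under taking duals. In this case the hypothesis $V\setminus\{m_1,p_1\}\notin F$ plays the role that $\{m_1,p_1\}\notin F$ played in Case (i).

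\textbf{Main obstacle.} The step I expect to be hardest is the bookkeeping in the uncrossing: deciding, in each subcase, which two sets to cross (for instance $U$ with $W_1$ or with $W_2$, and then the result with the other member of the pair) so that \ref{P3}, \ref{P4} or Proposition~\ref{rem:plus-in-intersection} actually yields a contradiction, and making sure the two excluded sets $\{m_1,p_1\}$ and $V\setminus\{m_1,p_1\}$ are exactly the degenerate configurations that remain.
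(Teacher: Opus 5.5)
Your overall structure matches the paper's. You assume neither vertex is removable, discard the set/set and pair/pair combinations via Lemmas~\ref{lem:m-m-p-p-set-set} and~\ref{lem:m-m-p-p-pair-pair}, and refute the two mixed cases. Your positional facts in Case (i) are also correct: $p_1\notin U$, $m_2\in U$, $p_2\in W_2\setminus W_1$, $m_1\in W_1$.

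The gap is that Case (i) is never actually closed. Your subcase $U\cup W_1\neq V$ is vacuous. Since $U\supseteq V^-\setminus\{m_1\}$ and $m_1\in W_1$, we get $(U\cup W_1)^{c-}=\emptyset$, so \Cref{rem:minus-out-union} forces $U\cup W_1=V$ outright. Neither the uncrossing with $W_2$ nor the fact $m_2\in W_1$ is needed. The whole case therefore lives in the subcase $U\cup W_1=V$. There, the tool you propose (\ref{P4} for $U,W_1$, compared with $W_2$) cannot produce a contradiction. Take the circle with cyclic order $m_3,q,m_2,m_1,p_1,p_2,m_4$, where $q,p_1,p_2$ are positive. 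Set $U=V\setminus\{m_1,p_1\}$, $W_1=\{m_3,q,m_2,m_1,p_1\}$, $W_2=\{m_1,p_1,p_2,m_4\}$. This family satisfies \ref{P0}--\ref{P4} and \ref{AP1}; $U$ is $m_1$-dangerous and $(W_1,W_2)$ is a $p_1$-dangerous pair. So only the hypothesis $\{m_1,p_1\}^c\notin F$ can finish the case.

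You do name $U=V\setminus\{m_1,p_1\}$ as a ``degenerate possibility''. What is missing is that this subcase \emph{is} that configuration. Since $p_2\notin W_1$, $U\cup W_1=V$ gives $p_2\in U$. An interval containing $m_2$ and $p_2$ but neither $m_1$ nor $p_1$ equals $V\setminus\{m_1,p_1\}$. That three-line argument is exactly the paper's Case (i).

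Case (ii) is just as short and uses only $\{m_1,p_1\}\notin F$. A $p_1$-dangerous set $X$ contains $m_1$ by \ref{AP1}, and hence $m_2$, since $X\neq\{m_1,p_1\}$. The member of the $m_1$-dangerous pair that contains $m_2$ avoids $p_1$ by \ref{AP1}. Then \Cref{rem:plus-in-intersection} applied to this member and $X$ yields a positive vertex of $X$ other than $p_1$, a contradiction.

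Your reason for avoiding Remark~\ref{rem:dual} is also mistaken. \ref{AP1} says precisely that no set of $F$ is entered by an arc (negative to positive) joining two adjacent vertices. That condition is invariant under complementing the sets, flipping the signs and reversing arcs. So Case (ii) is literally Case (i) for the dual instance, with the two excluded sets exchanged.

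In short, the ``main obstacle'' you anticipate does not exist. What your plan lacks is the observation that $U\cup W_1$ already contains every negative vertex.
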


\begin{proof} First suppose there exists an $m_1$-dangerous set $U_1$, i.e. $U_1^{c-}=\{m_1\}$. Note that by \ref{AP1}, $p_1\notin{U_1}$. By \Cref{lem:m-m-p-p-set-set}, there is no $p_1$-dangerous set. Suppose, for the sake of contradiction, there exists a $p_1$-dangerous pair $U,W$, i.e., $U\cup{W}=V$ and $(U\cap{W})^+=\{p_1\}$. Hence we may assume $p_2\notin{U}$. Since $p_1\in{U}$ and by \ref{AP1}, we have $m_1\in{U}$. 
If $p_2\in{U_1}$, then $U_1^c=\{m_1, p_1\}$, a contradiction to the assumption that $\{m_1, p_1\}^c\notin{F}$. Hence $p_2\notin{U_1}$. As $p_2\notin{U_1\cup{U}}$, we have $U\cup{U_1}\neq{V}$, however since all the $-$ signed vertices are covered by either $U$ or $U_1$, we have $(U\cup{U_1})^c\subset{V^+}$, a contradiction to \Cref{rem:minus-out-union}. Therefore, we cannot have a $p_1$-dangerous pair in $F$. 
This shows that if there exist a $m_1$-dangerous set, then $p_1$ is removable.

Now assume there is no $m_1$-dangerous set but we have a $m_1$-dangerous pair $W_1, W_2$, i.e. $W_1\cap{W_2}=\emptyset$ and $(W_1\cup{W_2})^{c-}=\{m_1\}$. Then we may assume $m_2\in{W_1}$ and by \ref{AP1}, we have $p_1\notin{W_1, W_2}$. If $p_2\in{W_1}$, since $W_1$ is an interval on the circle not including $\{m_1, p_1\}$, we have $W_1=V\backslash\{m_1, p_1\}$ and since $m_1, p_1\notin{W_2}$, we get $W_2\subseteq{W_1}$, a contradiction to $W_1\cap{W_2}=\emptyset$. Hence $p_2\notin{W_1}$. 

By \Cref{lem:m-m-p-p-pair-pair}, there is no $p_1$-dangerous pair. Now for the sake of contradiction, assume there exist a $p_1$-dangerous set $U_2$, i.e. $U_2^+=\{p_1\}$. By \ref{AP1}, $m_1\in{U_2}$. Since $\{m_1, p_1\}\notin{F}$, $m_2\in{U_2}$. 
Since $m_2\in{W_1}$, we have $U_2\cap{W_1}\neq{\emptyset}$. Hence by \Cref{rem:plus-in-intersection}, there exist a $+$ signed vertex $v\in{U_2\cap{W_1}}$. Since $p_1\notin{W_1}$, we have found a $+$ signed vertex $v\neq{p_1}$ in $U_2$, a contradiction. 

We proved that with the conditions of the lemma, if there exist a $m_1$-dangerous set or pair, then there is no $p_1$-dangerous set or pair and hence $p_1$ is removable. Furthermore, if there is no $m_1$-dangerous set or pair, we can remove $m_1$. This completes the proof. 
\end{proof}

\begin{lemma}\label{lem:m-m-p-p-2-block}
   Let $(V,\sigma,F)$ be an instance of the circle problem with additional properties \ref{AP1} and \ref{AP2}. 
   Suppose four consecutive vertices $m_2,m_1, p_1, p_2$ have signs $-, -, +, +$ respectively. Suppose $\{m_1, p_1\}\in{F}$ and $m_1$ is not removable. Let $p_1, p_2, ..., p_t$ be the vertices of the sign-block of $p_1$ in this order for some $t\ge 2$. For any $i\in[t]$, let $A_i=\{(m_2, p_s)|s\in\{1,2,..., i\}\}\cup\{(m_1, p_1)\}$ and $V_i=V\backslash\{m_1, p_1, p_2, ..., p_{i-1}\}$ and $\sigma_i=\sigma[V_i]$. Then there exist a $j\in\{1, 2, ..., t\}$ such that $(V_j,\sigma_j, (F\backslash{F_{A_j}})[V_j])$ is an instance of the circle problem and if $E_j$ is a solution to $(V_j,\sigma_j, (F\backslash{F_{A_j}})[V_j])$, then $E_j\cup{A_j}$ is a solution to $(V,\sigma,F)$. 
\end{lemma}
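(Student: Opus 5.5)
We follow the template of \Cref{lem:ending-minus-out-neighbor}. Since $m_1$ is not removable and $\{m_1,p_1\}\in F$, \Cref{lem:m-m-p-p-set-set} rules out a $p_1$-dangerous set other than $\{m_1,p_1\}$ coexisting with an $m_1$-dangerous set. We first show there is no $m_1$-dangerous set, so there is an $m_1$-dangerous pair. Suppose $U_1^{c-}=\{m_1\}$. By \ref{AP1}, $p_1\notin U_1$. Then $U_1$ and $\{m_1,p_1\}$ are disjoint, and their union misses only $+$ vertices, contradicting \ref{P3}. Hence an $m_1$-dangerous pair $(U,W)$ exists, with $U\cap W=\emptyset$ and $(U\cup W)^{c-}=\{m_1\}$.

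\textbf{Key structural claim about such pairs.} Exactly one of $U,W$, say $U$, contains $m_2$, and then $p_1\notin U\cup W$ by \ref{AP1}/\ref{AP2}. The set $W$ lies on the $p$-side of the circle. Since $W\cap\{m_1,p_1\}=\emptyset$ and \ref{P3} applied to $W$ and $\{m_1,p_1\}$ needs a negative vertex outside, $W$ does not contain $p_2,\dots$ all the way around. More precisely, we show that $W$ starts inside the block $p_2,\dots,p_t$: there is an $i\ge 2$ with $p_{i-1}\notin W$ and $p_i\in W$. If $W$ started after $p_t$, then by \ref{AP2} its left out-neighbour would be negative; otherwise \ref{AP1} forces the out-neighbour of a $+$ end to be $+$. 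So $W$ begins at some $p_i$, and using \ref{P2} together with the interval structure, $W$ meets $\{p_2,\dots,p_t\}$. Write $f(U,W)=i$. We take $j$ to be the maximum of $f$ over all $m_1$-dangerous pairs, realised by $(U^*,W^*)$.

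\textbf{Verification for $j$.} \ref{P0} and \ref{P1} follow from \Cref{lem:P0-P1-remains}, since $\{m_1,p_1,\dots,p_{j-1}\}$ is consecutive. Outer-planarity of $E_j\cup A_j$ holds because $A_j$ is a fan from $m_2$ to $p_1,\dots,p_j$ plus the edge $(m_1,p_1)$, and in $V_j$ the vertices $m_2$ and $p_j$ are adjacent. Every set contained in the deleted block contains some $p_s$ with $s<j$ and not $m_2$, so it is covered by $A_j$. Covering then follows from \Cref{lem:union-covering}.

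For \ref{P2}--\ref{P4} we argue as in \Cref{lem:ending-minus-out-neighbor}: an uncovered set $X$ containing some $p_s$ with $s\le j$ must contain $m_2$. Then:
\begin{itemize}
\item \ref{P2}: the negative side is unchanged except for $m_1$. If $X^{c-}$ became empty, then $X$ would be an $m_1$-dangerous set, which we excluded.
\item \ref{P2} (positive side) and \ref{P4}: a positive witness lying only in $\{p_1,\dots,p_{j-1}\}$ forces $X\ni m_2$, so $X\cap U^*\ne\emptyset$. We then uncross with $U^*$ and $W^*$ and use \Cref{rem:plus-in-intersection} to locate a positive vertex outside the deleted block.
\item \ref{P3}: deleting $m_1$ could create a new $m_1$-dangerous pair $(X,Y)$ restricted to $V_j$ whose restrictions are disjoint. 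Since $X,Y$ are uncovered, we lift them to a genuine $m_1$-dangerous pair in $F$ and show $f(X,Y)>j$, contradicting maximality. Alternatively, a positive element of $X\cap Y$ inside the deleted block contradicts coverage by $(m_2,p_s)$.
\end{itemize}

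\textbf{Main obstacle.} The hardest step is the \ref{P3}/\ref{P4} verification. There we must uncross a candidate violating pair with $(U^*,W^*)$ to produce either a dangerous pair with larger $f$ or a violation of \ref{P3} or \ref{P4} in $F$. We would first attempt the uncrossing $W^*\cup Y$, together with $U^*$, mirroring the proof of \Cref{lem:m-m-p-p-pair-pair}.
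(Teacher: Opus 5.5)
Your overall plan matches the paper's. You rule out an $m_1$-dangerous set via \ref{P3} applied to $\{m_1,p_1\}$, and you pick the $m_1$-dangerous pair $(U^*,W^*)$ maximizing $f$. Your treatment of \ref{P0}, \ref{P1}, \ref{P2}, \ref{P3}, covering and outer-planarity is essentially the paper's. The genuine gap is \ref{P4}. You label it the ``main obstacle'' and give only a tentative plan, so the lemma is not proved. Moreover, the concrete steps you suggest (uncrossing $W^*\cup Y$, or exhibiting a positive vertex via \Cref{rem:plus-in-intersection}) are not what closes it.

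Two cases must be handled for uncovered $X,Y$ with $X[V_j]\cup Y[V_j]=V_j$ and no positive vertex of $V_j$ in $X\cap Y$.

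\emph{Case $X\cup Y\neq V$.} Your sketch does not address this case, and \ref{P4} in $F$ supplies no positive witness here. \Cref{rem:minus-out-union} forces $m_1\notin X\cup Y$, hence $p_1\notin X\cup Y$ by \ref{AP1}. If $X\cap Y\neq\emptyset$, then $X\cup Y\in F$ and $\{m_1,p_1\}\in F$ are disjoint with no negative vertex outside their union, violating \ref{P3}. If $X\cap Y=\emptyset$, then $(X,Y)$ is an $m_1$-dangerous pair, and maximality of $j$ forces one of them to be covered.

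\emph{Case $X\cup Y=V$.} Here $(X\cap Y)^+\subseteq\{p_1,\dots,p_{j-1}\}$ is nonempty, so both sets contain $m_2$. One of them, say $X$, misses $p_j$. Hence $\{m_2,m_1,p_1,\dots,p_k\}\subseteq X$ and $p_{k+1}\notin X$ for some $k<j$. The uncrossing that works is $X$ with $U^*$, not $W^*\cup Y$:
\begin{itemize}
\item $m_2\in X\cap U^*$ and $p_j\notin X\cup U^*$, so $X\cup U^*\in F$.
\item A positive vertex of $Y\cap U^*$ lies outside $\{p_1,\dots,p_j\}$, so it cannot be in $X$; otherwise it would be a positive vertex of $V_j$ in $X\cap Y$.
\item Hence $U^*\not\subseteq X$, and $X\cup U^*=U^*\cup\{m_1,p_1,\dots,p_k\}$.
\item This set is disjoint from $W^*$, and $W^*\cup X\cup U^*$ contains every negative vertex, contradicting \ref{P3}.
\end{itemize}
So the contradiction comes from \ref{P3} applied with $W^*$, the other half of the maximizing pair. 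Applying \Cref{rem:plus-in-intersection} to $X\cap U^*$ or $Y\cap U^*$ alone only yields positive vertices outside $X\cap Y$, which contradicts nothing.
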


\begin{proof}
Let $U^*=\{m_1, p_1\}$. Since $U^*$ is a $p_1$-dangerous set, by \Cref{lem:m-m-p-p-set-set}, there is no $m_1$-dangerous set. Therefore, since $m_1$ is not removable, there exist a pair of sets $U,W\in{F}$ such that $U\cap{W}=\emptyset$ and $(U\cup{W})^{c-}=\{m_1\}$. By \ref{AP1}, $p_1\notin{W\cup{U}}$. 
For any such $m_1$-dangerous pair $(U,W)$, exactly one of them does not contain $m_2$, say $W$. Since $p_1\notin{W}$ and $(U\cup{W})^{c-}=\{m_1\}$, it follows from \ref{AP2} that there exist an $j\in\{2, 3, ..., t\}$ such that $p_j\in{W}$ and $p_{j-1}\notin{W}$. Denote as $f(U,W)=j$. W.l.o.g we assume $(U,W)$ has the maximum $f$ among all $m_1$-dangerous pairs. Consider $A=A_j$, $V'=V\backslash\{m_1, p_1, p_2, ..., p_{j-1}\}$, $\sigma'=\sigma[V']$ and $F'=(F\backslash{F_A})[V']$. We first show that $(V',\sigma', F')$ is an instance of the circle problem. 

As $m_2\in{U}$, $m_1\notin{U}$, $U\cap{W}=\emptyset$, and $U$ is an interval on the circle, we have $p_1, p_2, ..., p_{j-1}, p_j\notin{U}$. 
Figure~\ref{fig:alg-steps-combined}.c shows the sets $U^*, U, W$ around the circle and $A$. Using \Cref{lem:P0-P1-remains}, properties \ref{P0} and \ref{P1} hold for $(V',\sigma',F')$. 

To show \ref{P2} for $F'$, let $C\in{F}$ that is not covered by $A$ and let $C'=C[V']\in{F'}$. If $C'^+=\emptyset$, then $C^+\subseteq\{p_1, p_2, ..., p_{j-1}\}$ and is $C^+$ non-empty by \ref{P2} for $F$. Since $C$ is not covered by $A$, $m_2\in{C}$. Since $m_2\in{U}$, then $U\cap{C}\neq{\emptyset}$. However since $C^+\subseteq\{p_1, p_2, ..., p_{j-1}\}$ and ${p_1, p_2, ..., p_{j-1}}\notin{U}$, we get $(U\cap{C})^+=\emptyset$, a contradiction to \Cref{rem:plus-in-intersection}. This shows $C'^+\neq{\emptyset}$.\\
Now suppose $C'^-=\emptyset$. Then $C^{c-}=\{m_1\}$. By \ref{AP1}, $p_1\notin{C}$. Hence $U^*\cap{C}=\emptyset$. However, $U^*\cup{C}$ includes all $V^-$, a contradiction to \ref{P3}. This means $C'^-\neq\emptyset$. Thus \ref{P2} holds for $F'$. 

Before showing \ref{P3} and \ref{P4}, we prove that for any $m_1$-dangerous pair $C,D\in{F}$, at least one of them is covered by $A$. From the pair let $D$ be the set such that $m_2\notin{D}$. By the choice of $(U,W)$, $f(C,D)\leq{f(U,W)}$. Hence there exists an $i\in\{2,3, ..., j\}$ for $j=f(U,W)$ such that $p_i\in{D}$. Hence $D$ is covered by the arc $(m_2, p_i)$. 

To show \ref{P3} for $F'$, let $C,D\in{F}$ that are not covered by $A$ such that $C'=C[V']\in{F'}, D'=D[V']\in{F'}$ and $C'\cap{D'}=\emptyset$. Suppose for the sake of contradiction, $(C'\cup{D'})^{c-}=\emptyset$. If $C\cap{D}=\emptyset$, then we must have $(C\cup{D})^{c-}=\{m_1\}$ by \ref{P3} for $F$. Hence $(C,D)$ is a $m_1$-dangerous pair and so one of them is covered by $A$, a contradiction. Therefore, $C\cap{D}\neq{\emptyset}$. Since $C'\cap{D'}=\emptyset$, we must have $C\cap{D}\subseteq\{m_1, p_1, p_2, ..., p_{j-1}\}$. Hence we may assume $m_2\notin{C}$ and by \Cref{rem:plus-in-intersection}, there is a $p_i\in{C\cap{D}}$ for $i\in[j-1]$. Hence $C$ is covered by the arc $(m_2, p_i)\in{A}$, a contradiction. This concludes \ref{P3} for $F'$. 

To show \ref{P4} for $F'$, let $C,D\in{F}$ that are not covered by $A$ such that $C'=C[V']\in{F'}, D'=D[V']\in{F'}$ and $C'\cup{D'}=V'$. Suppose for the sake of contradiction, $(C'\cap{D'})^+=\emptyset$. 
\\
First suppose $C\cup{D}\neq{V}$. Since $C'\cup{D'}=V'$, we must have $(C\cup{D})^c\subseteq\{m_1, p_1, p_2, ..., p_{j-1}\}$. Using \Cref{rem:minus-out-union}, 
$(C\cup{D})^{c-}\neq{\emptyset}$ so $m_1\notin{C\cup{D}}$. Thus, by \ref{AP1}, $p_1\notin{C\cup{D}}$. If $C\cap{D}\neq{\emptyset}$, then by \ref{P1},  $C\cup{D}\in{F}$ and $((C\cup{D})\cup{U^*})^c\subset{V^+}$, a contradiction to \ref{P3}. Then $C\cap{D}=\emptyset$. However that means $(C,D)$ is an $m_1$-dangerous pair and so one of them is covered by $A$. Therefore $C\cup{D}=V$. \\
By \ref{P4} for $F$, $(C\cap{D})^+\neq{\emptyset}$ and since $(C'\cap{D'})^+=\emptyset$, we have $(C\cap{D})^+\subseteq\{p_1, p_2, ..., p_{j-1}\}$. Since $C,D$ are not covered by $A$, we have $m_2\in{C,D}$. We also may assume $p_j\notin{C}$. Then since $C$ is an interval and $C\cap\{p_1, p_2, ..., p_{j-1}\}\neq\emptyset$, there exist some $k\in[j-1]$ such that $\{m_2, m_1\}\cup\{p_1, p_2, ..., p_k\}\subseteq{C}$ and $p_{k+1}\notin{C}$. 
Since $p_j\notin{U}$, then $C\cup{U}\neq{V}$ and since $m_2\in{U}$, we have $C\cap{U}\neq{\emptyset}$. Then since $F$ is crossing, we have $C\cup{U}\in{F}$. We show that $U$ is not a subset of $C$ and therefore $C\cup{U}=U\cup\{m_1, p_1, p_2, ..., p_k\}$. 
This is because $m_2\in{D\cap{U}}$ and hence $D\cap{U}\neq{\emptyset}$ and by \Cref{rem:plus-in-intersection}, there exists a $+$ signed vertex $p\in{D\cap{U}}$ where $p\notin\{p_1, p_2, ..., p_j\}$ as $p_1, p_2, ..., p_j\notin{U}$, and since $(C\cap{D})^+\subseteq\{p_1, p_2, ..., p_{j-1}\}$, we have $p\notin{C}$. This shows that $U$ is not a subset of $C$. \\
We have $C\cup{U}=U\cup\{m_1, p_1, p_2, ..., p_k\}$. Observe that $W\cap{(C\cup{U})}=\emptyset$ and we have $(W\cup{(C\cup{U})})^c\subseteq{(W\cup{U}\cup\{m_1, p_1\})^c}\subset{V^+}$, a contradiction to \ref{P3} for $F$. This concludes proof of \ref{P4} for $F'$. 

We showed that $(V',\sigma',F')$ is an instance of the circle problem. 
Then by construction if $E'$ is a solution to $(V',\sigma', F')$, 
$E'\cup{A}$ is outer-planar. For every set $C\in{F}$ such that $C\subseteq{V\backslash{V'}}$, $C$ is covered by arc $(m_2, p_i)\in{A}$ for some $i\in[j-1]$ that $p_i\in{C}$. Hence by \Cref{lem:union-covering}, $E'\cup{A}$ covers $F$. Therefore,  
$E'\cup{A}$ will be a solution to $(V,\sigma,F)$. 
\end{proof}

\begin{lemma}\label{lem:m-m-p-p-co-2-block}
Let $(V,\sigma,F)$ be an instance of the problem with additional properties \ref{AP1} and \ref{AP2}. Suppose four consecutive vertices $m_2,m_1,p_1,p_2$ have signs $-, -, +, +$ respectively. Suppose $\{m_1, p_1\}^c\in{F}$ and $p_1$ is not removable. Let $m_\ell, m_{\ell-1}, ..., m_2, m_1$ be the consecutive vertices of sign-block of $m_1$ for some $\ell\ge 2$. For any $i\in{\ell}$, let $A_i=\{(m_s, p_2)|s\in\{1, 2, ..., i\}\}\cup\{(m_1, p_1)\}$ and $V_i=V\backslash\{p_1, m_1, m_2, ..., m_{i-1}\}$ and $\sigma_i=\sigma[V_i]$. Then there exist a $j\in{1, 2, ..., \ell}$ such that $(V_j,\sigma_j, (F\backslash{F_{A_j}})[V_j])$ is an instance of the circle problem and if $E_j$ is a solution to $(V_j,\sigma_j, (F\backslash{F_{A_j}})[V_j])$, then $E_j\cup{A_j}$ is a solution to $(V,\sigma,F)$. 
\end{lemma}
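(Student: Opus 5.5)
The plan is to derive this lemma from \Cref{lem:m-m-p-p-2-block} by passing to the dual instance, exactly as \Cref{lem:co-2-block-ending-minus} was derived from \Cref{lem:ending-minus-out-neighbor}. By \Cref{rem:dual}, $(V,\bar\sigma,\bar F)$ is an instance of the circle problem. The first step is to check that it inherits \ref{AP1} and \ref{AP2}. The end vertices of $V\backslash U$ are precisely the adjacent out-neighbors of the end vertices of $U$ with respect to $U$, and vice versa, while all signs are flipped. So \ref{AP1} for $\bar F$ says: whenever an end vertex $v$ of $U$ has a $-$ signed out-neighbor $v'$, then $v$ is $-$ signed; equivalently, a $+$ signed end vertex has a $+$ signed out-neighbor. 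This is \ref{AP1} for $F$. In the same way, \ref{AP2} for $\bar F$ is equivalent to \ref{AP2} for $F$.

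Next I read the configuration in the dual, traversing the circle in the reverse direction. The consecutive vertices $p_2,p_1,m_1,m_2$ then have $\bar\sigma$-signs $-,-,+,+$, so they play the roles of $m_2,m_1,p_1,p_2$ in \Cref{lem:m-m-p-p-2-block}. The remaining hypotheses translate as follows.
\begin{itemize}
\item The hypothesis $\{m_1,p_1\}^c\in F$ becomes $\{p_1,m_1\}\in\bar F$.
\item A $p_1$-dangerous set $U$ in $F$ (meaning $U^+=\{p_1\}$) corresponds to $V\backslash U\in\bar F$ whose complement contains exactly one $\bar\sigma$-negative vertex, namely $p_1$. So it is a $p_1$-dangerous set in the dual.
\item A $p_1$-dangerous pair $(U,W)$ with $U\cup W=V$ and $(U\cap W)^+=\{p_1\}$ becomes a disjoint pair of complements whose union misses exactly the $\bar\sigma$-negative vertex $p_1$.
\end{itemize}
Hence ``$p_1$ not removable'' in $F$ is exactly ``the dual $m_1$-role vertex $p_1$ is not removable'' in $\bar F$. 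Finally, the sign-block $m_1,m_2,\dots,m_\ell$ in $F$ is the $\bar\sigma$-positive block $p_1,\dots,p_t$ of the dual lemma, with $t=\ell\ge2$.

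Applying \Cref{lem:m-m-p-p-2-block} to the dual gives an index $j$, an arc set $\bar A_j=\{(p_2,m_s)\mid s\le j\}\cup\{(p_1,m_1)\}$, and a vertex set $V_j=V\backslash\{p_1,m_1,\dots,m_{j-1}\}$. For these, $(V_j,\bar\sigma_j,(\bar F\backslash\bar F_{\bar A_j})[V_j])$ is an instance of the circle problem, and any solution $\bar E_j$ of it extends to the solution $\bar E_j\cup\bar A_j$ of the dual. Reversing all arcs yields precisely $A_j$ from the statement. An arc $(a,b)$ covers $U$ if and only if the reversed arc covers $V\backslash U$, so $\overline{\bar F_{\bar A_j}}=F_{A_j}$. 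Restricting to $V_j$ also commutes with complementation: $(V\backslash U)[V_j]=V_j\backslash U[V_j]$. The one caveat is that sets whose restriction becomes empty, or becomes all of $V_j$, are dropped on one side. Such a set would violate \ref{P2} in the reduced instance, and the dual lemma already establishes \ref{P2} there, so no such sets occur. Therefore the reduced dual instance is exactly the dual of $(V_j,\sigma_j,(F\backslash F_{A_j})[V_j])$, and by \Cref{rem:dual} the latter is an instance of the circle problem.

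For the second conclusion, let $E_j$ be a solution of the primal reduced instance. Its reversal is a solution of the dual reduced instance by \Cref{rem:dual}. Adding $\bar A_j$ and reversing everything back gives the solution $E_j\cup A_j$ of $(V,\sigma,F)$. The main obstacle is the bookkeeping in the correspondence above: confirming that ``removable'' and restriction behave correctly under complementation. In particular, I need to check that $F[V']$ does not lose or create sets asymmetrically relative to $\bar F[V']$. If that translation becomes awkward, the fallback is to redo the proof of \Cref{lem:m-m-p-p-2-block} directly in dual form, choosing the pair minimizing $f$.
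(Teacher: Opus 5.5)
Your route is the paper's. Its proof is the single sentence that this lemma is \Cref{lem:m-m-p-p-2-block} applied to the dual instance. Your translation of the hypotheses is correct: \ref{AP1} and \ref{AP2} are self-dual, the reversed reading $p_2,p_1,m_1,m_2$ gives signs $-,-,+,+$, non-removability of $p_1$ carries over, $t=\ell$, and reversing the dual arc set gives $A_j$.

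The one step that does not go through as written is how you resolve the caveat about restrictions, because it handles only one of the two asymmetric cases.
\begin{itemize}
\item The case you handle correctly is $U\in F\backslash F_{A_j}$ with $U\cap V_j=\emptyset$. Then $(V\backslash U)\cap V_j=V_j$ survives in the dual reduced family and would violate \ref{P2} there.
\item The case that matters for validity of the primal reduced instance is $U\in F\backslash F_{A_j}$ with $U\supseteq V_j$. Such a $U$ puts $V_j$ itself into $(F\backslash F_{A_j})[V_j]$. Its complement restricts to $\emptyset$ and is discarded on the dual side, so \ref{P2} for the dual reduced instance says nothing about it. Invoking \ref{P2} for the primal reduced instance instead would be circular.
\end{itemize}
Without excluding the second case, you only get that the dual of the dual reduced instance is a subfamily of $(F\backslash F_{A_j})[V_j]$. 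That suffices for the ``solution'' conclusion but not for showing the primal reduced instance is valid.

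The fix is one line. Suppose $U\in F\backslash F_{A_j}$ and $U\supseteq V_j$. Then $\emptyset\neq V\backslash U\subseteq\{p_1,m_1,\dots,m_{j-1}\}$. By \ref{P2} for $F$, some $m_s$ with $s\le j-1$ lies outside $U$, while $p_2\in V_j\subseteq U$. Hence $(m_s,p_2)\in A_j$ covers $U$, contradicting $U\notin F_{A_j}$. For $j=1$ no such $U$ exists at all, since $\{p_1\}$ contains no negative vertex. This is just the dual of the final paragraph of the proof of \Cref{lem:m-m-p-p-2-block}, which shows that sets contained in the deleted vertices are covered. With this added, your argument is complete, and more explicit than the paper's one-line proof, which leaves this bookkeeping implicit.
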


\begin{proof}
    This is equivalent to \Cref{lem:m-m-p-p-2-block} for the dual instance. See figure~\ref{fig:alg-steps-combined}.d.
\end{proof}

\begin{lemma}\label{lem:no-m-m-p-p}
Let $(V,\sigma,F)$ be an instance of the circle problem with additional properties \ref{AP1} and \ref{AP2}. Then we may assume there is no four consecutive vertices $m_2, m_1, p_1, p_2$ around the circle where $m_1, m_2$ have a $-$ sign and $p_1, p_2$ have a $+$ sign. 
\end{lemma}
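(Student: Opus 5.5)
The plan is to treat \Cref{lem:no-m-m-p-p} as a reduction statement in the same spirit as \Cref{lem:plus-end-plus-out} and \Cref{lem:ends-minus-minus-out}. Whenever four consecutive vertices $m_2,m_1,p_1,p_2$ with signs $-,-,+,+$ exist, we replace $(V,\sigma,F)$ by a strictly smaller instance $(V',\sigma',F')$ together with an arc set $A$ placed among consecutive vertices. Any solution $E'$ of the smaller instance then lifts to the solution $E'\cup A$ of the original one. Since $|V'|<|V|$, we may iterate (re-establishing \ref{AP1} and \ref{AP2} via \Cref{lem:plus-end-plus-out} and \Cref{lem:ends-minus-minus-out} after each step, which only deletes further vertices). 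The process terminates, and at the end no such configuration remains. The work is to check that in every situation one of the earlier lemmas supplies such a reduction.

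The case split is on whether $U^*:=\{m_1,p_1\}$ or its complement lies in $F$.

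\textbf{Case (i):} neither $U^*$ nor $U^{*c}$ is in $F$. By \Cref{lem:m-m-p-p-simple}, some $u\in\{m_1,p_1\}$ is removable. By \Cref{rem:deletable-vertex}, deleting $u$ yields a valid instance, and any cover of $F[V\setminus\{u\}]$ covers $F$. Outer-planarity is preserved because the new instance lives on the circle with $u$ omitted. Adding the arc between the newly adjacent opposite-signed pair ($m_2p_1$ or $m_1p_2$) costs nothing, by \Cref{lem:adjacent-arcs}.

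\textbf{Case (ii):} $U^*\in F$. Then $U^{*c}\notin F$, since $F$ has no complementary sets. If $m_1$ is removable, we proceed as in Case (i). Otherwise \Cref{lem:m-m-p-p-2-block} applies; it requires $t\ge2$, which holds because $p_2$ is positive. It provides $j$ and the reduced instance on $V_j$, which deletes $m_1,p_1,\dots,p_{j-1}$. One must check that $E_j\cup A_j$ stays outer-planar. The arcs of $A_j$ run from $m_2$ to $p_1,\dots,p_j$ plus the arc $m_1p_1$, all inside the interval from $m_2$ to $p_j$. The vertices strictly between $m_2$ and $p_j$ are deleted, so any arc of $E_j$ between vertices of $V_j$ does not cross them. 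That lemma asserts exactly this lifting.

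\textbf{Case (iii):} $U^{*c}\in F$. If $p_1$ is removable, we proceed as in Case (i). Otherwise \Cref{lem:m-m-p-p-co-2-block} applies; it is the dual of Case (ii) via \Cref{rem:dual}, and $\ell\ge2$ holds because $m_2$ is negative.

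The main obstacle is guaranteeing that the hypotheses \ref{AP1} and \ref{AP2} hold at each invocation. The reductions of Cases (ii) and (iii), and even the removal of a single vertex in Case (i), can create new sets whose end vertex has an opposite-signed out-neighbor. The plan is to handle this by returning to Step~\ref{Stepadjacent}/\ref{Stepminusminus}-type reductions before re-examining the configuration. This is legitimate because each of those reductions is itself justified by \Cref{lem:adjacent-arcs}, \Cref{lem:ending-minus-out-neighbor}, and \Cref{lem:co-2-block-ending-minus}, and each strictly decreases $|V|$ or $|F|$. Hence a potential-function argument on $(|V|,|F|)$ ensures termination. Composing all the lifted arc sets, which are nested because each is drawn within deleted intervals, gives the required solution. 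This justifies assuming that no $m_2,m_1,p_1,p_2$ pattern exists.
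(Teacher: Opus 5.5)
Your proposal is correct and follows the paper's proof. It uses the same three-way split on whether $\{m_1,p_1\}$, its complement, or neither lies in $F$, invoking \Cref{lem:m-m-p-p-2-block}, \Cref{lem:m-m-p-p-co-2-block}, and \Cref{lem:m-m-p-p-simple} respectively. You also spell out two points the paper leaves implicit in its minimal-counterexample framing: the sub-case where $m_1$ (resp.\ $p_1$) is removable, which those two lemmas exclude by hypothesis, and the re-establishment of \ref{AP1} and \ref{AP2}.
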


\begin{proof}
This is a consequence of \Cref{lem:m-m-p-p-simple} when $\{m_1,p_1\},\{m_1,p_1\}^c\notin{F}$, of \Cref{lem:m-m-p-p-2-block} if $\{m_1,p_1\}\in{F}$ and \Cref{lem:m-m-p-p-co-2-block} otherwise. 
\end{proof}

\begin{lemma}\label{lem:m-m-p-m}
Let $(V,\sigma,F)$ be an instance of the circle problem with additional properties \ref{AP1} and \ref{AP2}. Suppose four consecutive vertices $m_2,m_1,p,m_3$ have signs $-,-,+,- $ respectively. Then $m_1$ is removable. 
\end{lemma}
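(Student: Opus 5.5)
The plan is to argue directly from Definition~\ref{deletable-vertex}: I show that $F$ contains neither an $m_1$-dangerous set nor an $m_1$-dangerous pair. The only tools needed are \ref{AP1} and \ref{AP2}, applied at the two ends of the $+$ signed vertex $p$, together with \ref{P0}. Throughout I use that $m_2,m_1,p,m_3$ are four distinct consecutive vertices in this order on $C$. In particular $p$ is adjacent to both $m_1$ and $m_3$.

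\textbf{No $m_1$-dangerous set.} Suppose $U\in F$ satisfies $U^{c-}=\{m_1\}$. Then $m_1\notin U$, and every other negative vertex lies in $U$, so $m_3\in U$. Split on whether $p\in U$.
\begin{itemize}
\item If $p\in U$, then $p$ is an end vertex of the interval $U$, because its neighbour $m_1$ is outside $U$. So $m_1$ is an adjacent out-neighbour of $p$ with respect to $U$. A $+$ signed end vertex with a $-$ signed adjacent out-neighbour contradicts \ref{AP1}.
\item If $p\notin U$, then $m_3$ is an end vertex of $U$ with adjacent out-neighbour $p$. A $-$ signed end vertex with a $+$ signed adjacent out-neighbour contradicts \ref{AP2}.
\end{itemize}

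\textbf{No $m_1$-dangerous pair.} Suppose $U,W\in F$ are disjoint with $(U\cup W)^{c-}=\{m_1\}$. Then $m_1\notin U\cup W$ and $m_3\in U\cup W$. Run the same dichotomy on $p$.
\begin{itemize}
\item If $p$ lies in one of the sets, say $U$, then $p$ is an end vertex of $U$ whose adjacent out-neighbour $m_1$ is negative. This contradicts \ref{AP1}.
\item If $p\notin U\cup W$, then $m_3$ lies in one of the sets, say $U$. In that set $m_3$ is an end vertex whose adjacent out-neighbour $p$ is positive. This contradicts \ref{AP2}.
\end{itemize}

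Hence $m_1$ is removable. Note that $m_2$ plays no real role beyond making the pattern match the algorithm's Step~\ref{step-mmpm}. The only point requiring care is that the endpoints are genuinely ``end vertices with adjacent out-neighbours''. This follows from \ref{P0}: an interval containing a vertex but not its neighbour must end at that vertex. There is no real obstacle here; the argument is a short local case check.
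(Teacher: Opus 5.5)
Your proof is correct and is essentially the paper's argument. Both rule out an $m_1$-dangerous set or pair the same way: $m_3$ must lie in the (relevant) set, \ref{AP1} at $p$ (whose adjacent out-neighbour would be $m_1$) forces $p$ outside that set, and then $m_3$ is a $-$ signed end vertex with $+$ signed adjacent out-neighbour $p$, contradicting \ref{AP2}.
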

\begin{proof}
    Suppose otherwise. Let $U_1$ be an $m_1$-dangerous set, i.e. $U_1^{c-}=\{m_1\}$. Then $m_3\in{U_1}$ and by \ref{AP1}, $p\notin{U_1}$. However this is a contradiction to \ref{AP2}. Now suppose there exist a $m_1$-dangerous pair $(U,W)$. Then we may assume $m_3\in{W}$. Similarly by \ref{AP1}, we have $p\notin{W}$ and together with $m_3\in{W}$, we get a contradiction to \ref{AP2}. 
\end{proof}

\begin{lemma}\label{lem:p-p-m-p}
Let $(V,\sigma,F)$ be an instance of the circle problem with additional properties \ref{AP1} and \ref{AP2}. 
Suppose four consecutive vertices $p_2,p_1,m,p_3$ have signs $+,+,-,+$ respectively. Then $p_1$ is removable. 
\end{lemma}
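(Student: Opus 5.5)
The plan is to obtain this as the dual of \Cref{lem:m-m-p-m}. By \Cref{rem:dual}, $(V,\bar{\sigma},\bar{F})$ is an instance of the circle problem. We first check that \ref{AP1} and \ref{AP2} are exchanged under duality. Suppose $U\in F$ has a $+$ signed end vertex $v$ with adjacent out-neighbor $v'$. Then in $\bar F$ the set $U^c$ has end vertex $v'$, whose adjacent out-neighbor with respect to $U^c$ is $v$, and all signs are flipped. So \ref{AP1} for $(V,\sigma,F)$ says exactly that every $-$ signed end vertex of a set of $\bar F$ has a $-$ signed adjacent out-neighbor, which is \ref{AP2} for the dual; the converse is symmetric. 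Hence the dual also satisfies \ref{AP1} and \ref{AP2}.

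The four consecutive vertices $p_2,p_1,m,p_3$ with signs $+,+,-,+$ become, under $\bar\sigma$, vertices with signs $-,-,+,-$. This is precisely the configuration $m_2,m_1,p,m_3$ of \Cref{lem:m-m-p-m}, with $p_1$ playing the role of $m_1$. Therefore $p_1$ is removable in the dual.

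It remains to transfer removability back. A $p_1$-dangerous set $U\in F$ has $U^+=\{p_1\}$. Its complement $U^c\in\bar F$ satisfies $(U^c)^{c-}=\{p_1\}$ under $\bar\sigma$, so it is a $p_1$-dangerous set in the dual. Likewise, a $p_1$-dangerous pair $U,W$ with $U\cup W=V$ and $(U\cap W)^+=\{p_1\}$ gives disjoint complements $U^c,W^c$ whose union has complement $U\cap W$, and the only $\bar\sigma$-negative vertex there is $p_1$. So this is a $p_1$-dangerous pair in the dual. The correspondence is a bijection, so $p_1$ is removable in $(V,\sigma,F)$.

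If a direct argument is preferred, mirror \Cref{lem:m-m-p-m}. For a dangerous set $U$ with $U^+=\{p_1\}$, we have $p_3\notin U$ and $p_2\notin U$. By \ref{AP1}, since $p_1$ would be an end vertex of $U$ with $+$ signed out-neighbor $p_2$ unless $U$ extends, $p_1$ and $m$ both lie in $U$. Then $m$ is a $-$ signed end vertex with out-neighbor $p_3$ of sign $+$, contradicting \ref{AP2}. The pair case is analogous: pick the set of the pair not containing $p_3$. The only delicate step is this bookkeeping of which end is which, and the duality route avoids it.
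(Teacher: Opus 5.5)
Your proposal is correct and follows the paper's proof, which simply invokes \Cref{lem:m-m-p-m} on the dual instance. Your explicit check that dangerous sets and dangerous pairs correspond under complementation fills in what the paper leaves implicit.

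One slip needs fixing. \ref{AP1} and \ref{AP2} are not exchanged by duality; each is preserved. A boundary pair of $U$ with a $+$ end vertex and a $-$ out-neighbor becomes, in $U^c$ with flipped signs, again a $+$ end vertex with a $-$ out-neighbor. This still gives that the dual satisfies both properties, so nothing breaks.

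Similarly, in your direct argument, $m\in U$ holds because otherwise $p_1$ would be a $+$ end vertex with the $-$ out-neighbor $m$, contradicting \ref{AP1}. It does not follow from anything about $p_2$.
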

\begin{proof}
    This is equivalent to \Cref{lem:m-m-p-m} for the dual instance.
\end{proof}

\subsection{Proof of \Cref{thm:circle}}\label{proof-of-circle-thm}
This is a proof by contradiction. Consider an instance $(V,\sigma, F)$ of the circle problem with the smallest value of $|V|+|F|$ such that there is no outer-planar subset of arcs from $V^-$ to $V^+$ that covers $F$. Note that $F\neq{\emptyset}$. By Lemmas~\ref{lem:plus-end-plus-out} and \ref{lem:ends-minus-minus-out}, we may assume \ref{AP1} and \ref{AP2} hold for $F$. By Lemmas~\ref{lem:no-m-m-p-p}, \ref{lem:m-m-p-m} and \ref{lem:p-p-m-p}, the signs of vertices of $V$ are alternating around the circle. Then by properties \ref{AP1} and \ref{AP2}, $F=\emptyset$, a contradiction. Therefore, \Cref{thm:circle} holds for all instances of the circle problem.

\section{Bridge Lemma}\label{sec:bridge-lemma-with-proof}
In this section we formally state the Bridge Lemma that connects \Cref{thm:circle} and \Cref{thm:main}.

Let $(D=(V,A), w)$ be a $0,1$-weighted digraph. For any vertex $v\in{V}$, denote by $N_D^-(v)$ or simply $N^-(v)$ the set of vertices that have an outgoing arc to $v$. Furthermore, denote by $N_D^+(v)$ or simply $N^+(v)$ the set of vertices that get an incoming arc from $v$. A vertex $u$ is a source if $|N^-(u)|=0$ and $|N^+(u)|\ge 1$. Furthermore, a vertex $v$ is a \textit{near-source} if $|N^-(v)|=1$ and $|N^+(v)|\ge 1$. Analogously, a vertex $w$ is a sink if $|N^+(w)|=0$ and $|N^-(w)|\ge 1$ and is a \textit{near-sink} if $|N^+(w)|=1$ and $|N^-(w)|\ge 1$. 
We say that $v$ is a \textit{weight-$0$} vertex if all the arcs incident to $v$ have weight $0$. Denote by $D\backslash\{v\}$ the graph obtained by removing vertex $v$ with all its edges from $D$. 

Let $\delta^+(U)$ be a dicut in $D$. We say that $\delta^+(U)$ is \textit{trivial} if either $|U|=1$ or $|V\backslash{U}|=1$, i.e. $U$ only consist of a source or $V\backslash{U}$ only consist of a sink.

Denote by a \textit{proper} $0,1$-weighted plane digraph $(D,w)$, when every dicut has weight at least $2$ and the weight-1 arcs form a connected component, not necessarily spanning.

We say that a $0,1$-weighted digraph $(D,w)$ is \textit{super-proper} if it is proper and has the following properties:

\begin{enumerate}[label={\bfseries A\arabic*}, start=1]
    \item\label{A1} $D$ is acyclic.
    \item\label{A2} Every minimum weight dicut in $D$ is trivial.
    \item\label{A3} If $x$ is a weight-$0$ vertex in $D$, then $x$ is not a cut vertex.
    \item\label{A4} If $x$ is a weight-$0$ vertex in $D$, then $x$ is not in a $2$-cut-set with a source/sink $v$ of weighted degree $2$ where the weight-$1$ arcs of $v$ go to two different components of $D\backslash{\{x,v\}}$. 
    \item\label{A5} $D$ has no weight-$0$ near-source or near-sink vertex.
\end{enumerate}

We now state the Bridge Lemma.

\begin{lemma}\label{lem:bridge-lemma}
    Let $(D,w)$ be a super-proper $0,1$-weighted plane digraph. 
    Let $v$ be a weight-$0$ vertex. Let $N^-(v)=\{l_1, l_2, ..., l_k\}$ and $N^+(v)=\{r_1,r_2, ..., r_t\}$ for some $k,t\ge 1$. Let $A=\{(l_i, r_j)|\forall\; i\in[k], j\in[k]\}$ be a set of weight-$0$ arcs. Then there exists $A'\subseteq{A}$ such that the obtained weighted digraph $(D',w')$ by removing $v$ and adding $A'$ is a proper $0,1$-weighted plane digraph. Furthermore, if the weight-$1$ arcs can be decomposed into two dijoins $J_1, J_2$ of $D'$, then $J_1,J_2$ are also dijoins of $D$. 
\end{lemma}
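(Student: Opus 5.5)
Our plan is to apply \Cref{thm:circle} to the crossing family
$$F=\{U\subseteq V\backslash\{v\}\mid \delta^+(U)\text{ is a dicut of weight at most }1\text{ in } D\backslash\{v\}\},$$
restricted to the ground set $N:=N^-(v)\cup N^+(v)$. The vertices of $N$ are placed around a circle in the cyclic order in which their arcs leave $v$ in the plane embedding, and we sign $N^-(v)$ negatively and $N^+(v)$ positively. Acyclicity (\ref{A1}) makes $N^-(v)\cap N^+(v)=\emptyset$, so the signing is well defined. The family we feed to the theorem is $F_N=\{U\cap N: U\in F\}$.

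The first step is to check the hypotheses of \Cref{thm:circle}.

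Crossing. Dicut shores of $D\backslash\{v\}$ form a crossing family. For crossing $U,W\in F$, submodularity of the weight $w(\delta^+(\cdot))$ on dicut shores gives $w(U\cap W)+w(U\cup W)\le 2$. In $D$, every dicut of $D\backslash\{v\}$ extends to a dicut of $D$ (add $v$ or not) of the same weight, and every dicut of $D$ has weight at least $2$. So both $U\cap W$ and $U\cup W$ have weight exactly $1$, which keeps them in $F$; restriction to $N$ then preserves crossing by \Cref{lem:crossing-family-subset}.

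Cosigning. For $U\in F$, neither $U$ nor $U\cup\{v\}$ can be a dicut shore in $D$, since its weight would be at most $1$. Hence $U$ contains an out-neighbour of $v$, and its complement contains an in-neighbour. The $\cap\cup$-conditions follow the same way. If $U\cap W\neq\emptyset$ but contains no out-neighbour, then $U\cap W$ either lies in $F$ (the crossing case) or, when $U\cup W=V\backslash\{v\}$, adding $v$ to the complement still gives a dicut of $D$ of weight at most $w(U)+w(W)-\ldots$. We handle the latter by counting weight-$1$ arcs, using that two weight-$\le1$ dicuts with union everything force low weight on the intersection. Symmetric arguments give the co-intersection condition.

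No complementary sets. If $U$ and $(V\backslash\{v\})\backslash U$ were both dicut shores, the graph would split, contradicting \ref{A3}, and \ref{A4} rules out the degenerate source/sink case.

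Intervals. Planarity plus \ref{A3}--\ref{A5} should show each $U\cap N$ is a cyclic interval: a dicut of $D\backslash\{v\}$ corresponds to a closed curve in the dual passing through the face left by $v$. That curve meets the boundary of this face in two points, so it splits the neighbours of $v$ into two arcs. Where the curve passes through the region twice we use \ref{A3} and \ref{A4}. We also need $U\cap N\neq\emptyset$ and $\neq N$, which the cosigning property already gives.

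Next we apply \Cref{thm:circle} to obtain an outer-planar $A'\subseteq A$ covering $F_N$. Deleting $v$ leaves a face containing all of $N$ on its boundary in the circular order, so $A'$ can be drawn inside it and $D'$ is plane. The weight-$1$ arcs are unchanged, so they remain connected. For a dicut $\delta^+(U)$ of $D'$, the set $U$ is a dicut shore of $D\backslash\{v\}$ not entered by $A'$. If its weight is at most $1$, then $U\in F$ and $U\cap N$ is covered by $A'$, a contradiction. So every dicut of $D'$ has weight at least $2$, and $D'$ is proper.

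Finally we transfer dijoins. A dicut $\delta^+_D(S)$ of $D$ with $v\notin S$ (the case $v\in S$ is symmetric) has $N^+(v)\cap S=\emptyset$. Then $S$ has no incoming arc in $D'$ from $A'$, since every arc of $A'$ ends in $N^+(v)$. So $S$ is a dicut shore of $D'$, and its weight-$1$ arcs coincide with those of $\delta^+_D(S)$ because arcs at $v$ have weight $0$. Hence $J_1$ and $J_2$ meet it.

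The main obstacle is proving the interval property and the $\cap\cup$-closedness rigorously from the planar embedding together with \ref{A2}--\ref{A5}. For the interval property we would first try the dual-cycle argument.
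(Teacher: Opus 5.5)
Your overall plan matches the paper's: the same family $F$, the same signing of $N(v)$ in circular order around $v$, and an application of \Cref{thm:circle}. Your last two steps (properness of $D'$ and transfer of dijoins, including the remark that \ref{A1} gives $N^-(v)\cap N^+(v)=\emptyset$) are correct, and more careful than the paper's. The gap is in verifying the hypotheses of \Cref{thm:circle}, starting with \ref{P1}.

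You claim that every dicut of $D\backslash\{v\}$ extends, by adding $v$ or not, to a dicut of $D$ of the same weight. This is false. A shore $U$ extends only if $U\cap N^+(v)=\emptyset$ or $N^-(v)\subseteq U$, and your own \ref{P2} paragraph shows that every $U\in F$ fails both; if the claim held, $F$ would be empty. What the crossing step needs is that $D\backslash\{v\}$ has no weight-$0$ dicut. Then $w(\delta^+(U\cap W))+w(\delta^+(U\cup W))=w(\delta^+(U))+w(\delta^+(W))\le 2$ forces both terms to equal $1$. This is where \ref{A1} and \ref{A5} enter, and you never use \ref{A5}. The weight-$1$ arcs are connected and avoid $v$, so a weight-$0$ dicut of $D\backslash\{v\}$ has a side all of whose incident arcs have weight $0$. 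By acyclicity that side contains a source (or sink) $u$ whose only other possible in-neighbour (out-neighbour) in $D$ is $v$. Then $u$ either gives a weight-$0$ dicut of $D$ or is a weight-$0$ near-source (near-sink), contradicting \ref{A5}.

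The same fact, not \ref{A3} or \ref{A4}, is what gives \ref{P0}. Suppose a side of a weight-$1$ dicut of $D\backslash\{v\}$ were disconnected. The weight-$1$ subgraph could meet only one of its components, since passing between two components needs two weight-$1$ arcs in the cut. So another component is itself a weight-$0$ dicut shore of $D\backslash\{v\}$. Hence both sides are weakly connected, the cut is a bond, and your dual cycle passes through the face of $v$ at most once. The paper argues in the primal instead: two disjoint paths outside the circle, one in each side, joining alternating intervals would have to cross.

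For \ref{P3} and \ref{P4}, your counting gives, in the \ref{P4} case, only that $U\cap W$ is a dicut shore of $D$ of weight $w(\delta^+(U))+w(\delta^+(W))\le 2$. That does not contradict properness. The missing ingredient is \ref{A2}:
\begin{itemize}
\item this dicut has minimum weight, hence is trivial;
\item since $U\backslash W$ and $W\backslash U$ are nonempty, $U\cap W=\{u\}$ is a source of weighted degree $2$ sending one weight-$1$ arc into each of them;
\item no arc joins $U\backslash W$ and $W\backslash U$, so $\{u,v\}$ is exactly the $2$-cut-set excluded by \ref{A4}.
\end{itemize}
\ref{P3} is the dual argument, with the single sink $V\backslash(U\cup W\cup\{v\})$. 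As written, \ref{A2} and \ref{A5} are never used, and \ref{P0}, \ref{P1}, \ref{P3}, \ref{P4} are not established.
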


\subsection{Proof of \Cref{lem:bridge-lemma} (Bridge Lemma)}
Let ${F}=\{U\in{V\backslash{\{v\}}}|\delta^+(U)\text{ is a dicut of weight at most 1 in }{D\backslash\{v\}}\}$. 
To obtain a proper $0,1$-weighted plane digraph from $D\backslash\{v\}$, we need to remove all dicuts in ${F}$. We do that by finding a set of arcs $A'\subset{A}$ such that every set in ${F}$ has an incoming arc from $A'$ while preserving the planarity. To do so, we show that $(N(v),\sigma, {F})$ is an instance of the circle problem where the circle $C$ is obtained by the circular ordering of $N(v)$ around $v$ in a plane drawing of $D$. The function $\sigma:N(v)\rightarrow{\{+,-\}}$ is defined as follows: For every vertex in $x\in{N(v)}$, $x$ gets a $-$ sign if $x\in{N^-(v)}$ and a $+$ sign if $x\in{N^+(v)}$. 

\begin{claim}\label{clm:no-weight-0-dicut-in-D-v}
$D\backslash\{v\}$ has no weight-$0$ dicut. 
\end{claim}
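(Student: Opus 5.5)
We argue by contradiction: suppose $U\subseteq V\backslash\{v\}$, $U\neq\emptyset$, $U\neq V\backslash\{v\}$, is the shore of a dicut $\delta^+(U)$ of weight $0$ in $D\backslash\{v\}$. The plan is to reinsert $v$ on one side and obtain a dicut of $D$ of weight at most $1$, or a nontrivial minimum-weight dicut, contradicting properness, \ref{A2}, or one of \ref{A3}--\ref{A5}. The first observation is structural. Every weight-$1$ arc of $D$ lies in $D\backslash\{v\}$ because $v$ is a weight-$0$ vertex, and these arcs form a connected component. Since $\delta^+(U)$ has no weight-$1$ arc and $\delta^-(U)=\emptyset$ in $D\backslash\{v\}$, no weight-$1$ arc crosses between $U$ and $U^c:=(V\backslash\{v\})\backslash U$. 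Hence the whole weight-$1$ component lies on one side, say inside $U$; the case inside $U^c$ is symmetric by reversing arcs.

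\textbf{Step 1: placing $v$.} In $D$, consider the two candidate shores $U$ and $U\cup\{v\}$. The set $U$ is a dicut shore of $D$ exactly when $v$ has no out-neighbour in $U$ (no arc $v\to U$). The set $U\cup\{v\}$ is a dicut shore exactly when $v$ has no in-neighbour in $U^c$. Every dicut $\delta^+_D(\cdot)$ obtained this way consists only of arcs of $\delta^+_{D\backslash v}(U)$ together with arcs at $v$, so it has weight $0$. This contradicts properness (weight at least $2$) whenever one of the two shores is valid and is a proper nonempty subset of $V$, which holds automatically.

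\textbf{Step 2: the remaining case.} Both shores fail exactly when $v$ has an out-neighbour in $U$ and an in-neighbour in $U^c$. Then $v$ separates: every path from $U^c$ to $U$ in the underlying graph avoiding $v$ would have to use an arc of $\delta^+(U)$. If $\delta^+_{D\backslash v}(U)=\emptyset$, then $U$ and $U^c$ are disconnected in $D\backslash\{v\}$, so $v$ is a cut vertex, contradicting \ref{A3}. Otherwise there are weight-$0$ arcs from $U$ to $U^c$, together with the path $U^c\to v\to U$. Using \ref{A1} acyclicity, I would follow these arcs to get a directed cycle $u\to x \to\dots$. Concretely, pick an in-neighbour $a\in U^c$ of $v$ and an out-neighbour $b\in U$ of $v$. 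In $D$ all arcs between $U$ and $U^c$ go from $U$ to $U^c$, so acyclicity alone does not immediately contradict. Instead, I would consider the weight-$1$ component $K\subseteq U$ and the set $X$ of vertices of $U^c$. Every vertex of $U^c$ is then incident only to weight-$0$ arcs.

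For $U^c$ I take a source or sink of $D[U^c]$, via acyclicity, and show it is a weight-$0$ vertex that is a near-source or near-sink, or a source or sink whose singleton dicut has weight $0$. Both outcomes are contradictions, to \ref{A5} or to properness respectively. More directly: a sink $s$ of $D$ inside $U^c$, which exists by acyclicity restricted to the downward closure, gives the trivial dicut $\delta^-(s)$ of weight $0$. So I would argue that $U^c\cup\{v\}$, or an appropriate down-closed subset of it, contains a sink of $D$ distinct from $v$. That sink has weight $0$ and is a contradiction.

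\textbf{Main obstacle.} The delicate point is Step 2, guaranteeing a genuine sink or source of $D$ with weight $0$ (or a weight-$0$ near-source or near-sink). Arcs from $U^c$ may lead back into $v$ and then into $U$, so a sink of $D[U^c]$ need not be a sink of $D$. I would handle this by choosing the sink of $D[U^c]$ carefully. If its only out-arcs go to $v$, it is a weight-$0$ near-sink, contradicting \ref{A5}. If it has several out-arcs, all of them go to $v$, which is impossible since $D$ is simple. This gives the contradiction in every case.
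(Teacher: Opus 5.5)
Your final argument is correct and is essentially the paper's proof in mirror image. The paper takes the weight-$0$ side to be the source side $U$ and picks a source of $(D\backslash\{v\})[U]$, whose only possible in-neighbour is $v$. You take the weight-$0$ side to be $U^c$ and pick a sink $s$ of $D[U^c]$, whose only possible out-neighbour is $v$; then either properness or \ref{A5} gives the contradiction. Step~1, the cut-vertex case and the appeal to $D$ being simple are not needed: the sink argument works in every case, and near-sinks are defined via $|N^+(s)|=1$, so parallel arcs to $v$ do no harm.
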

\begin{cproof}
    Suppose otherwise. Let $\delta^+(U)$ be a weight-$0$ dicut in $D\backslash\{v\}$ and let $\bar{U}=V\backslash{(\{v\}\cup{U})}$. Since the weight-$1$ arcs form one connected component then one of $(D\backslash\{v\})[U]$ or $(D\backslash\{v\})[\bar{U}]$ has only weight-$0$ arcs. W.l.o.g let that be $(D\backslash\{v\})[U]$. By property \ref{A1}, $D$ is acyclic, hence there exist a vertex $u$ that is a source in $(D\backslash\{v\})[U]$. $u$ has an incoming arc from $v$ in $D$, otherwise $\delta^+\{u\}$ would be a weight-$0$ dicut in $D$. However this means $u$ is a near-source in $D$, a contradiction. Hence $D\backslash\{v\}$ has no weight-$0$ dicut. 
\end{cproof}

\begin{claim}\label{clm:P1-holds}
    \ref{P1} holds for ${F}$, i.e, ${F}$ is a crossing family. 
\end{claim}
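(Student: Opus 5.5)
We need to show that if $U,W\in F$ cross in $D\backslash\{v\}$, i.e.\ $U\cap W\neq\emptyset$ and $U\cup W\neq V\backslash\{v\}$, then $U\cap W$ and $U\cup W$ both belong to $F$. Here $F$ is the family of dicut shores of weight at most $1$. The argument is the standard uncrossing of dicuts combined with the modularity of arc weights; the only paper-specific input is \Cref{clm:no-weight-0-dicut-in-D-v}.

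Write $D_v:=D\backslash\{v\}$. First, dicut shores in any digraph form a crossing family: if $\delta^-(U)=\delta^-(W)=\emptyset$, then every arc entering $U\cap W$ or $U\cup W$ would enter $U$ or $W$. Hence $\delta^-(U\cap W)=\delta^-(U\cup W)=\emptyset$. Since $U\cap W$ and $U\cup W$ are nonempty proper subsets of $V\backslash\{v\}$, both $\delta^+(U\cap W)$ and $\delta^+(U\cup W)$ are dicuts of $D_v$.

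Next I compare weights using the standard identity
\[
w(\delta^+(U))+w(\delta^+(W)) = w(\delta^+(U\cap W))+w(\delta^+(U\cup W))+w(U\backslash W, W\backslash U)+w(W\backslash U, U\backslash W).
\]
It holds because each arc is counted equally on both sides, except arcs between $U\backslash W$ and $W\backslash U$, which appear only on the left. Since $U$ and $W$ have no entering arcs, there are in fact no arcs between $U\backslash W$ and $W\backslash U$ in either direction. (An arc from $U\backslash W$ to $W\backslash U$ would enter $W$, and the reverse arc would enter $U$.) So the identity reduces to
\[
w(\delta^+(U))+w(\delta^+(W))=w(\delta^+(U\cap W))+w(\delta^+(U\cup W)).
\]

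The left-hand side is at most $2$. By \Cref{clm:no-weight-0-dicut-in-D-v}, each term on the right is at least $1$. Therefore both terms on the right equal exactly $1$ (and incidentally both $U$ and $W$ have weight exactly $1$). Thus $U\cap W$ and $U\cup W$ are shores of dicuts of weight at most $1$ in $D_v$, so both lie in $F$, and $F$ is crossing.

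The main obstacle is that without \Cref{clm:no-weight-0-dicut-in-D-v} one of the two new dicuts could have weight $0$ and the other weight $2$, so that one of them would not lie in $F$. That claim is exactly what makes the weight-sum argument go through.
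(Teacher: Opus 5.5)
Your proof is correct and follows the paper's argument: uncross the two dicuts, use the identity $w(\delta^+(U))+w(\delta^+(W))=w(\delta^+(U\cap W))+w(\delta^+(U\cup W))$, and use Claim~\ref{clm:no-weight-0-dicut-in-D-v} to force both new dicuts to have weight exactly $1$. You also justify the identity (there are no arcs between $U\backslash W$ and $W\backslash U$), and you state the crossing condition relative to the ground set $V\backslash\{v\}$; the paper leaves both of these implicit.
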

\begin{cproof}
    By Claim~\ref{clm:no-weight-0-dicut-in-D-v}, ${F}$ only contains dicuts of weight $1$ in $D\backslash\{v\}$ and minimum weight of a dicut in $D\backslash\{v\}$ is $1$. Let $U,W\in{{F}}$ such that $U\cap{W}\neq{\emptyset}$ and $U\cup{W}\neq{V}$. Then $\delta^+(U\cap{W})$ and $\delta^+(U\cup{W})$ are also dicuts. Since $w(\delta^+(U\cap{W}))+w(\delta^+(U\cup{W}))=w(\delta^+(U))+w(\delta^+(W))=2$ and minimum weight of a dicut in $D\backslash\{v\}$ is $1$, we have $w(\delta^+(U\cap{W}))=w(\delta^+(U\cup{W}))=1$, hence $U\cap{W}, U\cup{W}\in{{F}}$. Therefore, ${F}$ is a crossing family.
\end{cproof}

\begin{claim}\label{clm:weight-1-dicut-both-sides-connected} 
Let $\delta^+(U)$ be a weight-$1$ dicut in $D\backslash\{v\}$ for non-empty set $U\subset{V\backslash\{v\}}$. Then $D[U]$ and $D[\bar{U}]$ are both weakly connected where $\bar{U}=V\backslash(U\cup\{v\})$. 
\end{claim}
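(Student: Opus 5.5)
We argue by contradiction, using the properness of $D$. The first point is that $D\backslash\{v\}$ has no weight-$0$ dicut (Claim~\ref{clm:no-weight-0-dicut-in-D-v}), so every dicut of $D\backslash\{v\}$ has weight at least $1$. Suppose $D[U]$ is not weakly connected; the case of $\bar U$ is symmetric after reversing all arcs. Write $U=U_1\cup U_2$ with $U_1,U_2$ nonempty and no arcs of $D\backslash\{v\}$ between them. Since $\delta^-(U)=\emptyset$ in $D\backslash\{v\}$, each $U_i$ has no arc entering it in $D\backslash\{v\}$. If also $U_i\ne V\backslash\{v\}$, which holds because $\bar U\neq\emptyset$, then each $\delta^+(U_i)$ is a dicut of $D\backslash\{v\}$.

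Since $\delta^+(U)=\delta^+(U_1)\,\dot\cup\,\delta^+(U_2)$, we get
\[
1=w(\delta^+(U))=w(\delta^+(U_1))+w(\delta^+(U_2))\ge 2,
\]
because each summand is at least $1$ by the no-weight-$0$-dicut claim. This is a contradiction, so $D[U]$ is weakly connected.

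For $\bar U$ the argument is the same. Suppose $\bar U=\bar U_1\cup\bar U_2$ with no arcs between the two parts. Then $\delta^-(\bar U_i)$ in $D\backslash\{v\}$ consists only of arcs from $U$, and $\bar U_i$ has no outgoing arcs. So $U\cup\bar U_{3-i}$ is a dicut shore whose cut is $\delta^+(U)\cap\delta^-(\bar U_i)$. The weights of these two dicuts add to $w(\delta^+(U))=1$, which again contradicts the fact that both are at least $1$.

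One subtlety is that the statement concerns $D[U]$ while the argument works in $D\backslash\{v\}$. Since $v\notin U$, we have $D[U]=(D\backslash\{v\})[U]$, so nothing changes. The main thing to check is that the shores built above are proper and nonempty, so that they really are dicuts of $D\backslash\{v\}$. For $U_i$, this holds because $\bar U\neq\emptyset$. For $U\cup\bar U_{3-i}$, it holds because $U\neq\emptyset$ and $\bar U_i\neq\emptyset$. I expect this verification, together with the correct reliance on Claim~\ref{clm:no-weight-0-dicut-in-D-v}, to be the only delicate step.
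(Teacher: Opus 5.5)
Your proof is correct, but it takes a different route from the paper's. The paper argues structurally. If $D[U]$ is disconnected, it takes a component $U_1$ of $D[U]$ containing no weight-$1$ arcs and uses acyclicity (\ref{A1}) to find a source $u$ of $D[U_1]$. It notes that $u$ must receive an arc from $v$, since otherwise $\delta^+(\{u\})$ is a weight-$0$ dicut of $D$. It concludes that $u$ is a weight-$0$ near-source, contradicting \ref{A5}; in effect it re-runs the proof of Claim~\ref{clm:no-weight-0-dicut-in-D-v}.

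You instead use Claim~\ref{clm:no-weight-0-dicut-in-D-v} as a black box, together with the observation that a disconnected shore splits $\delta^+(U)$ into two disjoint dicuts of $D\backslash\{v\}$ whose weights then sum to $1$. This is the same decomposition the paper uses in the minimality argument of Claim~\ref{clm:trivial-min-weight-dicuts}. Your version is shorter and shows that the claim only needs ``every dicut of $D\backslash\{v\}$ has positive weight.'' It also avoids a step left implicit in the paper: checking that the chosen component is not even incident to a weight-$1$ arc, which uses the fact that only one weight-$1$ arc crosses $\delta^+(U)$. The paper's version is self-contained but duplicates the earlier argument. Your explicit check that $U_i$ and $U\cup\bar U_{3-i}$ are nonempty proper shores with no entering arcs is exactly what the reduction requires.
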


\begin{cproof}
    Suppose otherwise and by symmetry suppose $D[U]$ is not weakly connected. Since the weight-$1$ arcs form one connected component, there exist a component $U_1$ in $D[U]$ that only has weight-$0$ arcs. By property \ref{A1}, $D$ is acyclic, therefore there exist a vertex $u\in{U_1}$ that is a source in $D[U_1]$ and therefore in $D\backslash\{v\}$. $u$ has an incoming arc from $v$ in $D$, otherwise $\delta^+\{u\}$ would be a weight-$0$ dicut in $D$. However this means $u$ is a near-source in $D$, a contradiction. This shows that $D[U]$ and $D[\bar{U}]$ must both be weakly connected. 
\end{cproof}

Recall that $C$ is obtained by the circular ordering of $N(v)$ around $v$ in a plane drawing of $D$. 

\begin{claim}\label{clm:p0-holds}
\ref{P0} holds for $(N(v),\sigma, {F})$, i.e, for any weight-$1$ dicut $\delta^+(U)$ of $D\backslash\{v\}$ for some non-empty set $U\subset{V\backslash\{v\}}$, $U\cap{C}$ is one contiguous interval. 
\end{claim}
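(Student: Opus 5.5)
We argue by contradiction using planarity together with Claim~\ref{clm:weight-1-dicut-both-sides-connected}. Let $\delta^+(U)$ be a weight-$1$ dicut of $D\backslash\{v\}$ and put $\bar U=V\backslash(U\cup\{v\})$. Suppose $U\cap C$ is not a contiguous interval of the circle $C$. Then there are four neighbours $a,b,c,d$ of $v$, appearing in this cyclic order around $v$, with $a,c\in U$ and $b,d\in\bar U$. This uses that every vertex of $N(v)$ lies in $U$ or in $\bar U$. If $U\cap C$ or $\bar U\cap C$ is empty, the interval property holds trivially, or the set is empty and then irrelevant.

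By Claim~\ref{clm:weight-1-dicut-both-sides-connected}, $D[U]$ is weakly connected, so there is a path $P$ from $a$ to $c$ inside $U$. Likewise there is a path $Q$ from $b$ to $d$ inside $\bar U$. Since $U\cap\bar U=\emptyset$ and neither path uses $v$, the paths $P$ and $Q$ are vertex-disjoint.

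Now use the plane drawing of $D$. The closed curve formed by the arcs $va$ and $vc$ together with $P$ is a Jordan curve $J$ through $v$. The arcs $vb$ and $vd$ leave $v$ in the two distinct angular sectors determined by $va$ and $vc$, because the cyclic order around $v$ is $a,b,c,d$. Hence, near $v$, the arc $vb$ starts on one side of $J$ and $vd$ on the other. So $b$ and $d$ lie in different faces of $J$, or on $J$ itself, which is impossible: $b,d\notin P$ since they are in $\bar U$, and $b,d\neq v$.

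The path $Q$ joins $b$ to $d$ while avoiding $J$. It avoids $v$ because $Q\subseteq \bar U$, it avoids $P$ by disjointness, and it avoids the arcs $va$ and $vc$ since in a plane drawing edges meet only at common endpoints. This contradicts the Jordan curve theorem. Therefore $U\cap C$ is a contiguous interval, and \ref{P0} holds.

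The main obstacle is making the local side-argument at $v$ rigorous. One has to check that $b$ and $d$ really lie on opposite sides of $J$, since the rotation at $v$ only gives local information. The standard fix is to take a small disk around $v$, which $J$ splits into two half-disks, one containing the initial segment of $vb$ and the other that of $vd$. A further care point is multiple arcs between $v$ and a single neighbour, but the interval is defined on vertices of $N(v)$, so distinct vertices suffice.
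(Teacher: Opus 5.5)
Your proposal is correct and follows the paper's argument: it takes an alternating quadruple $a,b,c,d$ around $v$, uses the paths inside $U$ and inside $\bar U$ given by Claim~\ref{clm:weight-1-dicut-both-sides-connected}, and derives a contradiction with planarity. The only difference is that the paper simply asserts that the two paths lie outside the circle and therefore must intersect, whereas you justify this step by closing $P$ into the cycle $va\cup P\cup vc$ and applying the Jordan curve theorem together with the rotation at $v$.
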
 

\begin{cproof}
Suppose otherwise. Then there exist $4$ disjoint and non empty contiguous intervals $I_1, I_2, I_3, I_4\subset{C}$ that $I_1, I_3\subset{U}$ and $I_2\cap{U}= I_4\cap{U}=\emptyset$, see figure~\ref{fig:contguous-interval}. 
\begin{figure}
    \centering
    \tikzset{every picture/.style={line width=0.75pt}} 

\begin{tikzpicture}[x=0.75pt,y=0.75pt,yscale=-0.75,xscale=0.75]

\draw  [dash pattern={on 0.84pt off 2.51pt}] (196.09,144.95) .. controls (196.09,101.35) and (231.44,66) .. (275.05,66) .. controls (318.65,66) and (354,101.35) .. (354,144.95) .. controls (354,188.56) and (318.65,223.91) .. (275.05,223.91) .. controls (231.44,223.91) and (196.09,188.56) .. (196.09,144.95) -- cycle ;
\draw  [fill={rgb, 255:red, 0; green, 0; blue, 0 }  ,fill opacity=1 ] (272.05,144.95) .. controls (272.05,143.3) and (273.39,141.95) .. (275.05,141.95) .. controls (276.7,141.95) and (278.05,143.3) .. (278.05,144.95) .. controls (278.05,146.61) and (276.7,147.95) .. (275.05,147.95) .. controls (273.39,147.95) and (272.05,146.61) .. (272.05,144.95) -- cycle ;
\draw    (234,53.91) -- (318,53.91) ;
\draw    (234,53.91) -- (234,68.91) ;
\draw    (318,53.91) -- (318,68.91) ;
\draw  [fill={rgb, 255:red, 0; green, 0; blue, 0 }  ,fill opacity=1 ] (272.05,66) .. controls (272.05,64.34) and (273.39,63) .. (275.05,63) .. controls (276.7,63) and (278.05,64.34) .. (278.05,66) .. controls (278.05,67.66) and (276.7,69) .. (275.05,69) .. controls (273.39,69) and (272.05,67.66) .. (272.05,66) -- cycle ;
\draw    (370,106.91) -- (370,171.91) ;
\draw    (370,106.91) -- (352,106.91) ;
\draw    (370,171.91) -- (352,171.91) ;
\draw  [fill={rgb, 255:red, 0; green, 0; blue, 0 }  ,fill opacity=1 ] (351,141.95) .. controls (351,140.3) and (352.34,138.95) .. (354,138.95) .. controls (355.66,138.95) and (357,140.3) .. (357,141.95) .. controls (357,143.61) and (355.66,144.95) .. (354,144.95) .. controls (352.34,144.95) and (351,143.61) .. (351,141.95) -- cycle ;
\draw    (178,110.91) -- (178,175.91) ;
\draw    (196,110.91) -- (178,110.91) ;
\draw    (196,175.91) -- (178,175.91) ;
\draw  [fill={rgb, 255:red, 0; green, 0; blue, 0 }  ,fill opacity=1 ] (193.09,144.95) .. controls (193.09,143.3) and (194.44,141.95) .. (196.09,141.95) .. controls (197.75,141.95) and (199.09,143.3) .. (199.09,144.95) .. controls (199.09,146.61) and (197.75,147.95) .. (196.09,147.95) .. controls (194.44,147.95) and (193.09,146.61) .. (193.09,144.95) -- cycle ;
\draw    (233.05,240.91) -- (317.05,240.91) ;
\draw    (317.05,225.91) -- (317.05,240.91) ;
\draw    (233.05,225.91) -- (233.05,240.91) ;
\draw  [fill={rgb, 255:red, 0; green, 0; blue, 0 }  ,fill opacity=1 ] (272.05,223.91) .. controls (272.05,222.25) and (273.39,220.91) .. (275.05,220.91) .. controls (276.7,220.91) and (278.05,222.25) .. (278.05,223.91) .. controls (278.05,225.56) and (276.7,226.91) .. (275.05,226.91) .. controls (273.39,226.91) and (272.05,225.56) .. (272.05,223.91) -- cycle ;
\draw  [draw opacity=0][dash pattern={on 4.5pt off 4.5pt}] (273.95,225.9) .. controls (264.61,228.82) and (254.61,230.41) .. (244.2,230.41) .. controls (192.85,230.41) and (151.22,191.8) .. (151.22,144.17) .. controls (151.22,96.55) and (192.85,57.94) .. (244.2,57.94) .. controls (254.61,57.94) and (264.61,59.53) .. (273.95,62.45) -- (244.2,144.17) -- cycle ; \draw  [dash pattern={on 4.5pt off 4.5pt}] (273.95,225.9) .. controls (264.61,228.82) and (254.61,230.41) .. (244.2,230.41) .. controls (192.85,230.41) and (151.22,191.8) .. (151.22,144.17) .. controls (151.22,96.55) and (192.85,57.94) .. (244.2,57.94) .. controls (254.61,57.94) and (264.61,59.53) .. (273.95,62.45) ;  
\draw  [draw opacity=0][dash pattern={on 4.5pt off 4.5pt}] (354.61,144.75) .. controls (358.73,156.34) and (361,169.03) .. (361,182.33) .. controls (361,237.05) and (322.56,281.41) .. (275.14,281.41) .. controls (227.72,281.41) and (189.27,237.05) .. (189.27,182.33) .. controls (189.27,169.86) and (191.27,157.92) .. (194.92,146.92) -- (275.14,182.33) -- cycle ; \draw  [dash pattern={on 4.5pt off 4.5pt}] (354.61,144.75) .. controls (358.73,156.34) and (361,169.03) .. (361,182.33) .. controls (361,237.05) and (322.56,281.41) .. (275.14,281.41) .. controls (227.72,281.41) and (189.27,237.05) .. (189.27,182.33) .. controls (189.27,169.86) and (191.27,157.92) .. (194.92,146.92) ;  

\draw (269,132) node [anchor=north west][inner sep=0.75pt]  [font=\scriptsize] [align=left] {$\displaystyle v$};
\draw (268,32) node [anchor=north west][inner sep=0.75pt]  [font=\scriptsize] [align=left] {$\displaystyle I_{1}$};
\draw (270.05,70) node [anchor=north west][inner sep=0.75pt]  [font=\scriptsize] [align=left] {$\displaystyle i_{1}$};
\draw (373,132) node [anchor=north west][inner sep=0.75pt]  [font=\scriptsize] [align=left] {$\displaystyle I_{2}$};
\draw (335.05,134) node [anchor=north west][inner sep=0.75pt]  [font=\scriptsize] [align=left] {$\displaystyle i_{2}$};
\draw (160,134) node [anchor=north west][inner sep=0.75pt]  [font=\scriptsize] [align=left] {$\displaystyle I_{4}$};
\draw (201,138) node [anchor=north west][inner sep=0.75pt]  [font=\scriptsize] [align=left] {$\displaystyle i_{4}$};
\draw (268.05,205) node [anchor=north west][inner sep=0.75pt]  [font=\scriptsize] [align=left] {$\displaystyle i_{3}$};
\draw (269,242) node [anchor=north west][inner sep=0.75pt]  [font=\scriptsize] [align=left] {$\displaystyle I_{3}$};
\draw (152,76) node [anchor=north west][inner sep=0.75pt]  [font=\scriptsize] [align=left] {$\displaystyle P_{1}$};
\draw (214,270) node [anchor=north west][inner sep=0.75pt]  [font=\scriptsize] [align=left] {$\displaystyle P_{2}$};
\draw (321,92) node [anchor=north west][inner sep=0.75pt]  [font=\scriptsize] [align=left] {$\displaystyle C$};

\end{tikzpicture}
    \caption[contiguous-interval]{For Claim~\ref{clm:p0-holds}}
    \label{fig:contguous-interval}
    \end{figure}
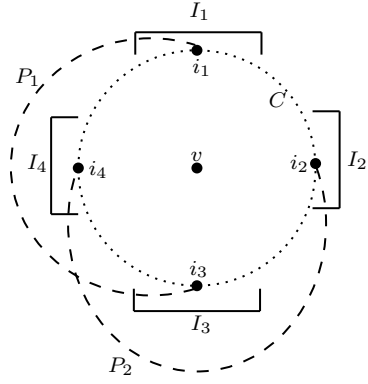
By Claim~\ref{clm:weight-1-dicut-both-sides-connected}, $D[U]$ and $D[V\backslash(U\cup\{v\})]$ are weakly connected. Therefore there exist a path $P_1$ between a vertex $i_1\in{I_1}$ and a vertex $i_3\in{I_3}$ where vertices of $P_1$ are in $U$. Moreover, there exist a path $P_2$ between a vertex $i_2\in{I_2}$ and a vertex $i_4\in{I_4}$ where vertices of $P_2$ are in $V\backslash(U\cup\{v\})$. Since $N(v)$ are around a circle $C$ with center $v$, thus both paths $P_1, P_2$ are outside of $C$ because they don't use $v$. Therefore paths $P_1$ and $P_2$ must intersect, a contradiction to planarity. This proves that $U\cap{C}$ must be one contiguous interval. 
\end{cproof}

\begin{claim}\label{clm:p2-holds}
\ref{P2} holds for $(N(v),\sigma, {F})$, i.e, for any weight-$1$ dicut $\delta^+(U)$ of $D\backslash\{v\}$ for some non-empty set $U\subset{V\backslash\{v\}}$, there exist vertices $m\in{N_D^-(v)}$ and $p\in{N_D^+(v)}$ such that $m\notin{U}$ and $p\in{U}$. 
\end{claim}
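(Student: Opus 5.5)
Since $\delta^+(U)$ is a dicut of $D\backslash\{v\}$, we only have to rule out two situations: $U$ contains no out-neighbour of $v$, or $U$ contains every in-neighbour of $v$. In each case we will show that $U$ or $U\cup\{v\}$ is the shore of a dicut of $D$ of weight at most $1$, which contradicts properness (every dicut of $D$ has weight at least $2$). Throughout, we use that $v$ is a weight-$0$ vertex, so adding or removing $v$ from a shore changes the weight of the cut only through weight-$0$ arcs.

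\emph{Existence of $p\in U\cap N^+(v)$.} Suppose $U\cap N^+(v)=\emptyset$. Consider $U$ as a subset of $V$. The arcs of $D$ entering $U$ are arcs of $D\backslash\{v\}$ entering $U$, of which there are none, together with arcs from $v$ into $U$. The latter would require an out-neighbour of $v$ in $U$, so there are none either. Hence $\delta^-_D(U)=\emptyset$. Moreover $U\neq\emptyset$ and $v\notin U$, so $\delta^+_D(U)$ is a dicut of $D$. Its arcs are those of $\delta^+_{D\backslash\{v\}}(U)$ plus possibly arcs from $U$ to $v$, and the latter have weight $0$. So its weight is at most $1$, contradicting that $(D,w)$ is proper.

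\emph{Existence of $m\in N^-(v)\backslash U$.} Suppose $N^-(v)\subseteq U$. Take $U':=U\cup\{v\}$. An arc entering $U'$ in $D$ either enters $U$ from $\bar U=V\backslash(U\cup\{v\})$, which is impossible, or enters $v$ from $\bar U$. The second case would need an in-neighbour of $v$ outside $U$, which does not exist. So $\delta^-_D(U')=\emptyset$. Also $U'\neq V$, since $\bar U\neq\emptyset$ because $U$ is a proper subset of $V\backslash\{v\}$. Thus $\delta^+_D(U')$ is a dicut, and its weight equals $w(\delta^+_{D\backslash\{v\}}(U))$ plus the weight of arcs from $v$, which is $0$. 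So it is at most $1$, again a contradiction.

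The only delicate point is the bookkeeping that $U$ and $U\cup\{v\}$ are nonempty proper subsets of $V$, so that the cuts are genuine dicuts. Both rely on $U$ being a nonempty proper subset of $V\backslash\{v\}$, which holds by the definition of $F$. Note that no structural properties \ref{A1}--\ref{A5} are needed beyond properness.
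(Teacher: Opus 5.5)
Your proposal is correct and is essentially the paper's argument. In each failure case, $U$ or $U\cup\{v\}$ is the shore of a dicut of $D$ whose weight equals that of $\delta^+(U)$ in $D\backslash\{v\}$, because all arcs at $v$ have weight $0$; this contradicts properness. The paper's text writes $\delta^+(U\cap\{v\})$ in the second case, which is a typo for $U\cup\{v\}$. Your version uses the correct set and spells out the entering-arc and nontriviality bookkeeping that the paper leaves implicit.
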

\begin{cproof}
   Suppose otherwise. If $N^+(v)\cap{U}=\emptyset$, then $\delta^+(U)$ is also a weight-$1$ dicut in $D$, a contradiction. Moreover if $N^-(v)\cap{(V\backslash(U\cup\{v\}))}=\emptyset$, then $\delta^+(U\cap\{v\})$ is a weight-$1$ dicut in $D$, a contradiction. 
\end{cproof}

\begin{claim}\label{clm:p3-holds}
\ref{P3} holds for $(N(v),\sigma,{F})$, i.e, for any two weight-$1$ dicuts $\delta^+(U), \delta^+(W)$ in $D\backslash\{v\}$ for some non-empty subsets $U, W\subset{V\backslash\{v\}}$ such that $U\cap{W}=\emptyset$, there exists a vertex $m\in{N_D^-(v)}$ such that $m\notin{U\cup{W}}$. 
\end{claim}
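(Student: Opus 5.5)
We argue by contradiction, in the same spirit as Claim~\ref{clm:p2-holds}. Suppose $U,W\in F$ are disjoint and every vertex of $N^-(v)$ lies in $U\cup W$. Let $\bar{X}=V\backslash(U\cup W\cup\{v\})$. Since $\delta^+(U)$ and $\delta^+(W)$ are dicuts of $D\backslash\{v\}$ and $U\cap W=\emptyset$, the set $U\cup W$ has no entering arcs in $D\backslash\{v\}$. Any arc entering $U\cup W$ would have to enter $U$ or $W$ from outside both, which is impossible. Moreover $\bar{X}\neq\emptyset$: if $U\cup W=V\backslash\{v\}$, then $W=V\backslash(\{v\}\cup U)$, and $\delta^+(U)$ being a dicut forces no arc from $W$ to $U$, while $\delta^+(W)$ being a dicut forces no arc from $U$ to $W$. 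Hence $D\backslash\{v\}$ would be disconnected, contradicting that both sides are weakly connected with weight-$1$ arcs in one component (Claim~\ref{clm:weight-1-dicut-both-sides-connected}). I will need to double check this degenerate case; alternatively, super-properness may be used here.

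Next we consider $Y:=U\cup W\cup\{v\}$ in $D$. Since all in-neighbours of $v$ lie in $U\cup W$, no arc enters $v$ from $\bar{X}$. We already showed that no arc of $D\backslash\{v\}$ enters $U\cup W$. Thus $\delta^-_D(Y)=\emptyset$ and $\bar{X}\neq\emptyset$, so $\delta^+_D(Y)$ is a dicut of $D$. Its arcs are arcs of $\delta^+(U)\cup\delta^+(W)$ going to $\bar{X}$, together with arcs from $v$ to $\bar{X}$, which have weight $0$. Hence $w(\delta^+_D(Y))\le w(\delta^+(U))+w(\delta^+(W))-w(U,W)-w(W,U)\le 2$.

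The main obstacle is pushing this below $2$, or else contradicting \ref{A2}. I will first try to use \ref{A2}: if $w(\delta^+_D(Y))=2$, then it is a minimum weight dicut of $D$, so it must be trivial. Then $|\bar{X}|=1$, say $\bar{X}=\{s\}$ with $s$ a sink. In that case $U\cup W\cup\{v\}=V\backslash\{s\}$, and the sink $s$ receives exactly the two weight-$1$ arcs, one from each of $U$ and $W$. I would then exploit that $v$ is a weight-$0$ vertex and \ref{A3}/\ref{A4}. Since no arc joins $U$ and $W$, the only connections between $U$ and $W$ in $D$ pass through $v$ and $s$. Hence $\{v,s\}$ is a $2$-cut-set with the sink $s$ of weighted degree $2$ whose weight-$1$ arcs go to the two different components $U$ and $W$, contradicting \ref{A4}. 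The case $|Y|=1$ is impossible since $U\neq\emptyset$. If instead the weight is at most $1$, we directly contradict properness of $D$. Either way $\bar{X}$ contains a vertex of $N^-(v)$, which proves \ref{P3}.
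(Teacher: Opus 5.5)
Your proof is correct and follows the paper's argument. You show that $\delta^+(U\cup W\cup\{v\})$ is a weight-$2$ dicut of $D$, hence trivial by \ref{A2}, so its complement is a sink $s$ of weighted degree $2$ and $\{v,s\}$ is a $2$-cut-set contradicting \ref{A4}. Your check that $V\backslash(U\cup W\cup\{v\})\neq\emptyset$, which the paper leaves implicit, is cleanest done directly: $w(\delta^+(U))=1$ forces an arc from $U$ into $V\backslash(U\cup\{v\})$, and that arc cannot end in $W$ because no arc enters $W$.
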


\begin{cproof}
    Note that $\delta^+(U\cup{W})$ is a dicut of weight $2$ in $D\backslash\{v\}$. For the sake of contradiction, suppose $(V\backslash(U\cup{W})\cap{N_D^-(v)}=\emptyset$. Then $\delta^+(U\cup{W}\cup\{v\})$ is a dicut of weight $2$ in $D$ and by property \ref{A2}, it is trivial. Hence $V\backslash(U\cup{W}\cup\{v\})$ is a sink vertex $u$ in $D$ with weighted degree $2$ that gets one weight-$1$ arc from each of $U$ and $W$. Moreover, $(u,v)$ is a $2$-cut-set, a contradiction to property \ref{A4}. 
\end{cproof}

\begin{claim}\label{clm:p4-holds}
\ref{P4} holds for $(N(v),\sigma, {F})$, i.e, for any two weight-$1$ dicuts $\delta^+(U), \delta^+(W)$ in $D\backslash\{v\}$ for non-empty subsets $U, W\subset{V\backslash\{v\}}$ such that $U\cup{W}=V\backslash\{v\}$, there exists a vertex $p\in{N_D^+(v)}$ such that $p\in{U\cap{W}}$. 
\end{claim}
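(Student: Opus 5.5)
The plan is to mirror the proof of Claim~\ref{clm:p3-holds}, exchanging the roles of sources and sinks (the dual argument). Let $\delta^+(U),\delta^+(W)$ be weight-$1$ dicuts of $D\backslash\{v\}$ with $U\cup W=V\backslash\{v\}$, and suppose for contradiction that no vertex of $N^+_D(v)$ lies in $U\cap W$. First, $U\cap W\neq\emptyset$: otherwise $U$ and $W$ partition $V\backslash\{v\}$ and are complementary there. Then every arc between $U$ and $W$ would lie in $\delta^+(U)\cup\delta^-(U)$, and $\delta^-(U)=\emptyset$. So $\delta^+(W)=\delta^-(U)=\emptyset$, giving a weight-$0$ dicut, which contradicts Claim~\ref{clm:no-weight-0-dicut-in-D-v}.

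Next apply uncrossing to $U,W$ in $D\backslash\{v\}$. We have $U\cap W\neq\emptyset$, and $U\cup W$ is the whole ground set. Hence $\delta^+(U\cap W)$ is a dicut of $D\backslash\{v\}$: no arc enters $U\cap W$, since an arc entering it from outside would enter $U$ or $W$. Also, by counting arcs (the complement of $U\cap W$ is the disjoint union of $U\backslash W$ and $W\backslash U$),
\[
w(\delta^+(U\cap W))\le w(\delta^+(U))+w(\delta^+(W))=2 .
\]
Now return to $D$. Since $U\cap W$ contains no out-neighbor of $v$, no arc of $D$ goes from $v$ into $U\cap W$. Therefore $\delta^+_D(U\cap W)$ is a dicut of $D$ of weight at most $2$. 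Since every dicut of $D$ has weight at least $2$, it is a minimum weight dicut. By property~\ref{A2} it is trivial. Its complement contains $v$ and the nonempty sets $U\backslash W$ and $W\backslash U$; these are nonempty because $U,W\neq V\backslash\{v\}$ (otherwise the dicut would be empty). So triviality forces $U\cap W=\{u\}$, where $u$ is a source of $D$ of weighted degree $2$.

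It remains to contradict property~\ref{A4}. The weight-$1$ arcs of $u$ leave $U\cap W$, and the weight bound forces exactly one of them to go into $W\backslash U$ (counted in $\delta^+(U)$) and one into $U\backslash W$ (counted in $\delta^+(W)$). I would then show that $\{u,v\}$ is a $2$-cut-set separating $U\backslash W$ from $W\backslash U$, and that these two arcs land in different components of $D\backslash\{u,v\}$. The reason is that any arc between $U\backslash W$ and $W\backslash U$ would leave $U$ or $W$ and so add weight, or enter $U$ or $W$, which is impossible. This is the main point needing care: we must rule out weight-$0$ arcs between the two sides, using $\delta^-(U)=\delta^-(W)=\emptyset$ inside $D\backslash\{v\}$. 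An arc from $U\backslash W$ to $W\backslash U$ enters $W$, so it is forbidden; symmetrically, the reverse arc enters $U$, so it is also forbidden. Hence the only connections between the two sides pass through $u$ or $v$, and \ref{A4} is violated. This contradiction proves the claim.
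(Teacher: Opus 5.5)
Your proof is correct and follows the same route as the paper. You uncross $U$ and $W$ to obtain the dicut $\delta^+(U\cap W)$ of weight at most $2$ and lift it to $D$, using that $U\cap W$ contains no out-neighbour of $v$. You then apply \ref{A2} to force $U\cap W$ to be a single source $u$ of weighted degree $2$, and contradict \ref{A4} via the $2$-cut-set $\{u,v\}$. Along the way you also check details the paper leaves implicit: that $U\cap W\neq\emptyset$, that $U\backslash W$ and $W\backslash U$ are nonempty, and that no arcs join them.
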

\begin{cproof}
    Note that $\delta^+(U\cap{W})$ is a dicut of weight $2$ in $D\backslash\{v\}$. For the sake of contradiction, suppose $(U\cap{W})\cap{N_D^+(v)}=\emptyset$. Then $\delta^+(U\cap{W})$ is also a dicut of weight $2$ in $D$ and by property~\ref{A2}, it is trivial. Hence $U\cap{W}$ is a source vertex $u$ in $D$ with weighted degree $2$ that gives one gives one weight-$1$ arc to each of $U\backslash\{u\}$ and $W\backslash\{u\}$. Moreover, $(u,v)$ is a $2$-cut-set, a contradiction to property~\ref{A4}. 
\end{cproof}
By the above claims, we showed that the properties \ref{P0}-\ref{P4} hold for $(N(v),\sigma, {F})$. Hence $(N(v),\sigma, {F})$ is is an instance of the circle problem. Therefore by \Cref{thm:circle}, there exist an outer-planer set of arcs $A'$ from $N^-(v)$ to $N^+(v)$ such that every set in ${F}$ has an incoming arc. Let $(D',w')$ be the digraph obtained by removing $v$ and adding $A'$. By construction $D$ is planar and has no dicut of weight $0$ or $1$. Clearly the weight-$1$ arcs still form one connected component in $D'$. Hence, $(D',w')$ is a proper $0,1$-weighted plane digraph. Let $\delta^+(U)$ be a dicut of $D$. Then $\delta^+(U\backslash\{v\})$ is a dicut of $D'$. Therefore, if $J_1, J_2$ are disjoint dijoins of $D'$ among the weight-$1$ arcs, they are also dijoins of $D$. This completes the proof of \Cref{lem:bridge-lemma}.
\qed

\section{\Cref{thm:main}}\label{sec:dijointhmproof}
Note that \Cref{thm:main} states that the weight-$1$ arcs of a proper $0,1$-weighted plane digraph $(D,w)$ can be decomposed into two dijoins of $D$. The crux of its proof is to eliminate the weight-$0$ vertices and replacing them by a plane gadget while preserving the minimum weight of a dicut. After that we use the following result from Chudnovsky et al. \cite{Chudnovsky16}.

\begin{theorem}[\cite{Chudnovsky16}]\label{thm:chudnovsky}
Let $(D,w)$ be a proper $0,1$-weighted plane digraph where weight-$1$ arcs form a spanning tree. Then they can be decomposed into two dijoins of $D$. 
\end{theorem}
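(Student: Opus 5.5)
Since the paper uses this result only as a black box, I would reprove it in the spirit of \cite{Chudnovsky16}. The approach is induction on $|V|$ combined with planar duality. Write $T$ for the set of weight-$1$ arcs, which forms a spanning tree. A decomposition of $T$ into two dijoins is the same as a $2$-colouring of $T$ in which every dicut $\delta^+(U)$ contains arcs of both colours. The hypothesis gives $|\delta^+(U)\cap T|\ge 2$ for every dicut. Since $T$ is spanning, every cut, and in particular every dicut, meets $T$. The plan is to reduce to a situation where the tight dicuts, meaning those meeting $T$ in exactly two arcs, are so structured that the colouring is forced and consistent.

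\textbf{Step 1: contraction.} Let $a\in T$ be a tree arc that lies in no tight dicut. I would contract $a$. The result is still planar and $T/a$ is still a spanning tree. Dicuts of $D/a$ are exactly the dicuts of $D$ not containing $a$, so they keep weight at least $2$. By induction $T/a$ splits into two dijoins of $D/a$. I then give $a$ either colour. A dicut containing $a$ is not tight, so it has at least two other tree arcs. The danger is that these might all have the same colour. To handle this I would instead choose $a$ to be an arc lying in no dicut of weight at most $3$ whenever one exists, or argue via uncrossing that the colour of $a$ can be chosen correctly. I expect this to be one delicate point.

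\textbf{Step 2: every tree arc lies in a tight dicut.} Tight dicut shores form a crossing family, by the same submodularity computation as in Claim~\ref{clm:P1-holds}, where the minimum weight is now $2$. Uncrossing gives a laminar (cross-free) subfamily that still covers every tree arc. Each tight dicut contains exactly two tree arcs, and these must receive different colours. So I build an auxiliary \emph{conflict graph} $H$ on $T$, joining the two tree arcs of each tight dicut, and I need $H$ to be bipartite. Then I must check that a proper $2$-colouring of $H$ also gives both colours to every non-tight dicut.

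\textbf{Step 3: planarity and the main obstacle.} Proving bipartiteness of $H$ is where planarity must enter, and I expect it to be the main obstacle. I would pass to the planar dual $D^*$. There, dicuts become directed cycles, and $T^*$, the set of duals of the non-tree arcs, is a spanning tree of $D^*$. A tight dicut becomes a directed cycle using exactly two arcs outside $T^*$. An odd cycle in $H$ would give a cyclic chain of such directed dual cycles. I would use the laminar structure together with the Jordan curve theorem to derive a contradiction: such a chain yields a directed cycle in $D^*$ that meets the non-$T^*$ arcs only once, which is a dicut of weight $1$ in $D$. The same dual picture should show that every non-tight dicut $\delta^+(U)$ contains a tight one's pair of conflicting arcs, via uncrossing $U$ with tight shores, and hence receives both colours.
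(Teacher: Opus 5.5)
The paper gives no proof of this statement. It imports it from \cite{Chudnovsky16} as a black box, so your outline has to stand on its own, and both of its load-bearing steps fail as written. It is convenient to specify instances through your dual picture: give a plane digraph $D^*$ and a spanning tree $T^*$ of it; then $D$ is the dual, and $T$ consists of the duals of the edges outside $T^*$.

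\emph{Step 1.} Take $D^*$ on $p,q,u_1,u_2,z$ with $T^*=\{(z,p),(z,q),(z,u_1),(z,u_2)\}$. Let the outside edges be $\rho=(p,q)$, $(q,u_1)$, $(u_1,p)$, $(q,u_2)$, $(u_2,p)$; this is $K_5$ minus an edge, hence planar. Its only directed cycles are $pqu_1p$ and $pqu_2p$, each using three outside edges. So $D$ satisfies the hypotheses and has no tight dicut. Moreover, every tree arc lies in a dicut of weight $3$, so your fallback choice of $a$ does not exist. Now take $a=\rho^*$. Contracting $a$ in $D$ corresponds to deleting $\rho$ in $D^*$, which leaves $D^*$ acyclic, so $D/a$ is strongly connected. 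The induction hypothesis may then return the split $\{(q,u_1)^*,(u_1,p)^*\}$ versus $\{(q,u_2)^*,(u_2,p)^*\}$. Whatever colour $a$ receives, one of the two dicuts through $a$ is monochromatic. You therefore need a rule for choosing $a$ or a stronger induction hypothesis controlling the colouring of $D/a$, and neither is supplied.

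\emph{Steps 2--3.} Your claim that every non-tight dicut contains both tree arcs of some tight dicut is false. Hence so is the claim that any proper $2$-colouring of $H$ works. Take $D^*$ on $a,b,c,z$ with $T^*=\{(z,a),(z,b),(z,c)\}$, and let the outside edges be all six arcs $r_1=(a,b)$, $r_2=(b,c)$, $r_3=(c,a)$, $s_1=(b,a)$, $s_2=(c,b)$, $s_3=(a,c)$. Its directed cycles are the digons $\{r_i,s_i\}$ and the triangles $\{r_1,r_2,r_3\}$ and $\{s_1,s_2,s_3\}$. So in $D$ every dicut has at least two tree arcs, and every tree arc lies in a tight dicut. $H$ is the perfect matching with edges $r_i^*s_i^*$ ($i=1,2,3$), and the dicut dual to $\{r_1,r_2,r_3\}$ contains no edge of $H$. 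Colouring every $r_i^*$ red and every $s_i^*$ blue properly colours $H$ yet leaves two dicuts monochromatic.

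Thus the real difficulty, choosing the colourings of the components of $H$ coherently, is not addressed. Two further gaps remain. The bipartiteness of $H$ is only asserted: no mechanism is given that turns an odd cycle of $H$ into a directed cycle of $D^*$ with a single outside edge. And the laminar subfamily plays no role, since $H$ must be built from all tight dicuts.
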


We can extend this theorem as follows. The weight-$1$ arcs only need to be spanning and connected, not necessarily a tree. In another words, the set of undirected weight-$1$ arcs can contain cycles. 

\begin{corollary}\label{cor:circle-removal}
    \Cref{thm:chudnovsky} holds if the weight-$1$ arcs form a spanning connected set.
\end{corollary}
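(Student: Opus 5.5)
The plan is to reduce to \Cref{thm:chudnovsky} by taking a spanning tree of the weight-$1$ arcs and re-weighting the remaining weight-$1$ arcs to $0$. Then we transfer the resulting decomposition back and distribute the leftover arcs.

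Let $(D,w)$ be a $0,1$-weighted plane digraph. Assume every dicut has weight at least $2$ and the weight-$1$ arcs $J$ form a spanning connected subgraph. The first attempt is to pick a spanning tree $T\subseteq J$ and set $w_T=\chi^T$. The difficulty is that a dicut of $D$ may contain two arcs of $J$ but only one arc of $T$. Then $(D,w_T)$ need not be proper, and \Cref{thm:chudnovsky} does not apply directly.

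I would therefore argue by induction on $|J|-(|V|-1)$, the number of independent cycles in $J$. In the base case $J$ is a tree, and \Cref{thm:chudnovsky} applies verbatim. For the inductive step, pick an arc $e\in J$ lying on a cycle of the underlying graph of $J$. Then $J\setminus\{e\}$ is still spanning and connected.

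\textbf{Case 1.} Every dicut contains at least two arcs of $J\setminus\{e\}$. Reweight $e$ to $0$. By induction $J\setminus\{e\}$ splits into two disjoint dijoins. Add $e$ to either of them; adding arcs to a dijoin keeps it a dijoin, so this decomposes $J$.

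\textbf{Case 2.} Some dicut $\delta^+(U)$ satisfies $e\in\delta^+(U)$ and $|\delta^+(U)\cap J|=2$. Since $e$ lies on a cycle $Q$ of $J$, which crosses $\delta^+(U)$ an even number of times, the other $J$-arc $f$ of this dicut also lies on $Q$. Here I would contract $e$ instead of deleting it. Contracting a non-loop arc preserves planarity and keeps $J/e$ spanning and connected. Dicuts of $D/e$ are exactly the dicuts of $D$ not containing $e$, so every dicut of $D/e$ still has weight at least $2$, and $J/e$ has one fewer independent cycle. By induction $J/e=J_1\cup J_2$ with $J_1,J_2$ disjoint dijoins of $D/e$. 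Each $J_i$ meets every dicut of $D$ avoiding $e$. It remains to put $e$ into one of $J_1,J_2$ so that both meet every dicut containing $e$. A dicut containing $e$ has at least two $J$-arcs, so one of them is $e$ and some other one lies in $J_1\cup J_2$. The main obstacle is choosing the side for $e$ consistently over all such dicuts.

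I would first try an uncrossing argument on the family $\mathcal{G}_i=\{U:\ e\in\delta^+(U),\ \delta^+(U)\cap J_i=\emptyset\}$ for $i=1,2$. If both $\mathcal{G}_1$ and $\mathcal{G}_2$ are nonempty, take $U_1\in\mathcal{G}_1$ and $U_2\in\mathcal{G}_2$. Both contain the tail of $e$ and exclude its head, so they cross. Then $\delta^+(U_1\cap U_2)$ and $\delta^+(U_1\cup U_2)$ are dicuts containing $e$, and their $J$-weights sum to $w(\delta^+(U_1))+w(\delta^+(U_2))$. Since the arcs of $J_1\cup J_2$ in $\delta^+(U_1)$ avoid $J_1$, and those in $\delta^+(U_2)$ avoid $J_2$, the modularity count should force one of the uncrossed dicuts to contain no $J$-arc besides $e$. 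That contradicts the weight-$2$ hypothesis. So some $\mathcal{G}_i$ is empty, and $e$ goes into $J_{3-i}$.

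If this counting fails, the fallback is to choose $e$ more carefully. For instance, pick $e$ so that no tight dicut separates the two ends of the cycle $Q$ except through $e$ and $f$, or symmetrically contract $f$ instead.
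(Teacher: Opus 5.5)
There is a genuine gap in Case~2. Your claim that one of $\mathcal{G}_1,\mathcal{G}_2$ must be empty is false, and the modularity count cannot yield it. Suppose $\delta^+(U_1)$ contains $a_1\ge 1$ arcs of $J\setminus\{e\}$ (all in $J_2$) and $\delta^+(U_2)$ contains $a_2\ge 1$ (all in $J_1$). Modularity only tells you that the two uncrossed dicuts contain $a_1+a_2\ge 2$ such arcs between them, which is compatible with each containing one.

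Here is a concrete failure. Take vertices $a,b,c,d$ and only the weight-$1$ arcs $e=(a,b)$, $f=(a,c)$, $g=(c,d)$, $h=(d,b)$. The dicuts are $\{e,f\},\{e,g\},\{e,h\}$, with shores $\{a\},\{a,c\},\{a,c,d\}$. Here $J$ is a single cycle, and if you pick $e$, Case~2 applies. But $D/e$ is the directed triangle $ab\to c\to d\to ab$, which has no dicuts. So the induction hypothesis may return $J_1=\{f\}$, $J_2=\{g,h\}$. Then $\{a,c\}\in\mathcal{G}_1$ and $\{a\}\in\mathcal{G}_2$; these are nested, so uncrossing returns them unchanged, and $e$ fits on neither side. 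The only valid decomposition of $J$ is $\{e\},\{f,g,h\}$, which the contracted instance cannot see. Your fallback of choosing $e$ ``more carefully'' is not an argument.

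Separately, contracting a non-loop arc leaves $|J|-|V|+1$ unchanged; it is $1$ both before and after in this example. So your induction parameter does not drop in Case~2. Inducting on $|J|$ would fix that, but not the extension step.

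The missing idea is already latent in your parity remark: contract a whole cycle $Q$ of $J$ instead of one arc. The contracted digraph $D/Q$ is planar and proper, and its weight-$1$ arcs form a spanning connected set with smaller cyclomatic number. Fix a direction of traversal of $Q$. After obtaining disjoint dijoins $J_1',J_2'$ of $D/Q$, add the forward arcs of $Q$ to $J_1'$ and the backward arcs to $J_2'$.
\begin{itemize}
\item A dicut $\delta^+(X)$ not separating $V(Q)$ is a dicut of $D/Q$, so both sets meet it.
\item A dicut that separates $V(Q)$ is crossed by $Q$ at least once leaving $X$ and once entering $X$. Since no arc enters $X$, the first crossing is a forward arc and the second a backward arc, so both new sets meet it.
\end{itemize}
Thus every decomposition of the contracted instance extends, with no choice to be made. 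This is exactly the paper's proof.
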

\begin{proof}
The following argument can be extended easily for any number of cycles in the underlying undirected subgraph of weight-$1$ arcs of $D$. We only include the proof when this subgraph contains one cycle $C$. Fix an orientation of $C$ as clock-wise, and let $C_1$ be the arcs of $C$ that are oriented clock-wise in $D$ and let $C_2=C\backslash{C_1}$. Let $(D',w')$ be obtained by contracting $C$. Since we are contracting a connected component, $D'$ is planar and its weight-$1$ arcs remain a connected component, in this case a tree. Also, contraction does not decrease the minimum weight of a dicut, hence $(D',w')$ is proper. Then by \Cref{thm:chudnovsky}, weight-$1$ arcs of $D'$ can be decomposed into two dijoins $J_1',J_2'$ of $D'$. \\
    We show that $J_1=J_1'\cup{C_1}$ and $J_2=J_2'\cup{C_2}$ are dijoins of $D$. 
    Let $\delta^+(X)$ be a dicut in $D$. If it does not separate $C$, $\delta^+(X)$ remains a dicut in $D'$ and therefore intersects both $J_1',J_2'$. Otherwise $\delta^+(X)$ separates $C$. Note that $C_1,C_2\neq{\emptyset}$, otherwise no dicut would separate $C$. $\delta^+(X)$ must intersect the cycle $C$ at least twice and since it's a dicut, $\delta^+(X)\cap{C_1}\neq{\emptyset}$ and $\delta^+(X)\cap{C_2}\neq{\emptyset}$. This shows that $J_1,J_2$ are dijoins of $D$.
\end{proof}
\subsection{Proof of \Cref{thm:main}}\label{sec:proof-main-thm}
First, we show we may assume the digraph of \Cref{thm:main} is super-proper. In each claim, we reduce to smaller instances, proving that disjoint dijoins of the reduced digraph are also dijoins of the original. The first two claims are routine.  

\begin{claim}\label{clm:acyclic-whole-graph}
     We may assume $D$ is acyclic, i.e, \ref{A1} holds for $D$. 
\end{claim}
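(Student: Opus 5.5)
Since the claim says we "may assume" \ref{A1}, the plan is a reduction: starting from a proper $0,1$-weighted plane digraph $(D,w)$, produce a smaller proper plane digraph with all directed cycles removed, such that any two disjoint dijoins among the weight-$1$ arcs of the reduced digraph give two disjoint dijoins among the weight-$1$ arcs of $D$. The natural operation is to contract a directed cycle. Suppose $D$ has a directed cycle $Q$. No dicut $\delta^+(U)$ can separate two vertices of $Q$: the cycle would then have to leave $U$ and re-enter it, giving $\delta^-(U)\neq\emptyset$. Hence the dicuts of $D$ are exactly the dicuts of $D/Q$ (with $Q$ contracted to a single vertex, loops deleted, parallel arcs kept), under the obvious identification of arcs. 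The arcs of $Q$ never lie in a dicut, so dropping them changes neither dicut weights nor which arc sets are dijoins.

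The steps are as follows. (i) Contract $Q$ to a vertex $q$ and delete the resulting loops; contracting a connected subgraph of a plane graph keeps it planar. (ii) Check properness. Every dicut of $D/Q$ corresponds to a dicut of $D$ with the same arcs, so it still has weight at least $2$. The weight-$1$ arcs still form a connected component: contraction preserves connectivity of the weight-$1$ subgraph, and deleting loops does not disconnect anything. If $Q$ contained weight-$1$ arcs, the component may merely lose some arcs while staying connected through $q$. (iii) Apply induction on $|V|$. The reduced instance has fewer vertices, so its weight-$1$ arcs contain disjoint dijoins $J_1',J_2'$ of $D/Q$. Pull these back to the corresponding arcs of $D$. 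Every dicut of $D$ is a dicut of $D/Q$ with the same arc set, so it meets both $J_1'$ and $J_2'$. They remain disjoint and consist of weight-$1$ arcs, so they are the required dijoins of $D$. Repeating this until no directed cycle remains yields \ref{A1}.

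The main obstacle I expect is how the induction is framed. The claim is one of several reductions, so the induction on $|V|$ (or on $|V|+|A|$) has to be set up over \Cref{thm:main} itself, with the reduced digraph being a genuine instance of \Cref{thm:main}. A subtle point concerns a weight-$1$ component that is entirely inside $Q$. After contraction it collapses to the single vertex $q$, and the weight-$1$ arcs may become empty. In that case, though, $D/Q$ has no dicut of weight at least $2$ unless it has no dicut at all, so $D/Q$ is a single vertex or strongly connected and the empty dijoins suffice. I would handle this degenerate case separately at the start.
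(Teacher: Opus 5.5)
Your proposal is correct and follows essentially the same route as the paper: contract a directed cycle, observe that no dicut can separate its vertices so the dicuts of $D$ and of the contracted digraph coincide, note that planarity and properness are preserved, and pull the two dijoins back to $D$. The paper leaves several points implicit that you spell out: the deleted loops, the degenerate case where every weight-$1$ arc collapses into the cycle (so $D$ has no dicuts at all), and the induction on $|V|$ that justifies ``we may assume''.
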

\begin{cproof}
    Suppose $D$ has a directed cycle $C$.  Let $(D',w')$ be obtained by contracting $C$.
    Then dicuts of $D'$ are precisely dicuts of $D$.  
    Moreover, we are contracting a connected subgraph of $D$, therefore $D'$ is planar and the its weight-$1$ arcs form one connected component. Hence, $(D',w')$ is a proper $0,1$-weighted plane digraph. Since dicuts of $D'$ and $D$ are the same, if $J_1,J_2$ are $2$ disjoint dijoins of $D'$ among its weight-$1$ arcs, they are also dijoins of $D$. 
\end{cproof}

\begin{claim}\label{clm:trivial-min-weight-dicuts}
Let $\delta^+(U)$ be a minimum weight dicut of $D$. Then we may assume $\delta^+(U)$ is trivial, i.e, \ref{A2} holds for $D$. 
\end{claim}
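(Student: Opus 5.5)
We reduce along a nontrivial minimum weight dicut in the standard way: contract each side separately, solve both smaller instances, and glue the solutions. By Claim~\ref{clm:acyclic-whole-graph} we may assume $D$ is acyclic. Let $\delta^+(U)$ be a minimum weight dicut with $|U|\ge 2$ and $|V\backslash U|\ge 2$, and let $k:=w(\delta^+(U))\ge 2$. We form $D_1$ by contracting $U$ to a single vertex $u$, and $D_2$ by contracting $V\backslash U$ to a single vertex $\bar u$. Both are smaller than $D$ in $|V|$, so we may induct on $|V|$ (or on $|V|+|A|$).

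We first check that $(D_i,w)$ is a proper $0,1$-weighted plane digraph.

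\emph{Planarity.} Contracting a connected set preserves planarity. To use this we need $D[U]$ and $D[V\backslash U]$ to be weakly connected. Suppose instead that $D[U]$ splits into parts $U_1,U_2$ with no arcs between them. Then $\delta^+(U_1)$ and $\delta^+(U_2)$ are both dicuts, since no arc enters $U$. Their weights sum to $k$, so one of them has weight less than $k$, contradicting minimality, unless that part carries weight $0$. A weight-$0$ dicut is impossible because every dicut has weight at least $2$. Hence both sides are connected, and the contractions remain plane.

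\emph{Dicut weights.} Dicuts of $D_i$ correspond exactly to dicuts of $D$ that do not separate the contracted side, so the minimum dicut weight in $D_i$ is still at least $2$.

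\emph{Connectivity of the weight-$1$ arcs.} This is the step where care is needed, since contracting a side that contains no weight-$1$ arc could in principle create a component. Contracting a connected vertex set never disconnects the weight-$1$ arcs: any weight-$1$ path in $D$ maps to a walk in $D_i$. We also need the weight-$1$ component of $D_i$ to be unique. Since $k\ge 1$ weight-$1$ arcs cross $\delta^+(U)$, the weight-$1$ component of $D$ meets both sides. After contraction it therefore passes through $u$ (respectively $\bar u$) and stays a single component.

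With properness established, apply induction to obtain disjoint dijoins $J_1^i,J_2^i$ of $D_i$ inside its weight-$1$ arcs, for $i=1,2$. The arcs of $\delta^+(U)$ are shared between $D_1$ and $D_2$, so the two decompositions must be made to agree on them. In $D_1$ the dicut $\delta^-(u)$ is exactly $\delta^+(U)$, and in $D_2$ the dicut $\delta^+(\bar u)$ is also $\delta^+(U)$. We may enlarge each $J$ so that $J_1^i\cup J_2^i$ is all weight-$1$ arcs, since supersets of dijoins are dijoins. Then each side partitions the $k$ shared arcs into two nonempty classes.

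The main obstacle is that these two partitions need not coincide. To handle it, strengthen the induction hypothesis to a prescribed version: for the trivial dicut at $u$, any partition of its weight-$1$ arcs into two nonempty classes can be realized. Alternatively, pick the decomposition of $D_2$ first and use it to fix the partition of the shared arcs when solving $D_1$. If that strengthening is not available, we fall back on the minimality of $k$: choose $U$ inclusion-wise minimal, and argue that one of the two sides then has a trivial structure, such as a single source with $k=2$, where the two classes are forced and therefore match.

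Once the partitions agree, set $J_j:=J_j^1\cup J_j^2$ for $j=1,2$. Let $\delta^+(X)$ be any dicut of $D$. By uncrossing with $U$, the set $X\cap U$ or $X\cup U$ gives a dicut that is not separated in one of the $D_i$. Alternatively, directly: a dicut of $D$ not crossing $U$ is a dicut of some $D_i$. Either way $\delta^+(X)$ meets both $J_1$ and $J_2$, which completes the reduction and establishes \ref{A2}.
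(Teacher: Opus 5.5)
Your setup is the paper's: contract each side, show both sides are weakly connected, check that $D_1,D_2$ are proper, then glue. Your weight-minimality argument for connectivity works as well as the paper's inclusion-minimality. The gluing, however, has a genuine gap in two places.

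\emph{The partitions are never matched.} The ``prescribed partition'' version of the induction hypothesis is a much stronger statement, which you neither prove nor get from the induction. Choosing $U$ inclusion-wise minimal cannot make one side ``a single source'', since $U$ is nontrivial by assumption.

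\emph{Matching would not suffice anyway.} For $k\ge 3$, or after you enlarge the $J_j^i$, the union $J_j=J_j^1\cup J_j^2$ need not be a dijoin. Take a dicut $\delta^+(X)$ crossing $U$, with $Y=X\cap U$ and $Y'=X\cap\bar U$ both nonempty and proper. Then $J_j^2$ may meet the dicut $\delta^+(Y)$ of $D_2$ only in a shared arc from $Y$ to $Y'$. Likewise $J_j^1$ may meet the dicut $\delta^+(Y'\cup\{u\})$ of $D_1$ only in a shared arc from $U\backslash Y$ to $\bar U\backslash Y'$. Neither arc lies in $\delta^+(X)$. Your last paragraph only handles dicuts not crossing $U$, and the uncrossing remark does not show that $\delta^+(X)$ itself is hit.

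The missing idea is to reduce along a nontrivial dicut of weight exactly $2$. This is what the paper does: it names the two weight-$1$ arcs $e,f$ of $\delta^+(U)$. It is also all that is needed, since \ref{A2} is only invoked for weight-$2$ dicuts in the proof of \Cref{lem:bridge-lemma}.

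With $k=2$, disjointness forces each of $J_1^i,J_2^i$ to contain exactly one of $e,f$. After relabeling, $e\in J_1^1\cap J_1^2$ and $f\in J_2^1\cap J_2^2$, so the partitions agree for free. Moreover each $J_j$ contains exactly one shared weight-$1$ arc $s$, so the bad situation above cannot occur. If $s\notin\delta^+(X)$, there are two possibilities:
\begin{itemize}
\item $s$ goes from $Y$ to $Y'$; then $J_j^1$ must meet $\delta^+(Y'\cup\{u\})$ in an arc from $Y'$ to $\bar U\backslash Y'$.
\item $s$ goes from $U\backslash Y$ to $\bar U\backslash Y'$; then $J_j^2$ must meet $\delta^+(Y)$ in an arc from $Y$ to $U\backslash Y$.
\end{itemize}
An arc from $U\backslash Y$ to $Y'$ would enter $X$, so no other case arises. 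In both cases the arc found lies in $\delta^+(X)$. This is exactly the paper's three-case check.

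A minor slip: $u$ is a source of $D_1$ and $\bar u$ a sink of $D_2$, so the shared dicut is $\delta^+(u)$ in $D_1$ and $\delta^-(\bar u)$ in $D_2$, not the other way round.
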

\begin{cproof}
    Let $\delta^+(U)$ be a non-trivial minimum weight dicut of $D$. We may assume $\delta^+(U)$ is minimal, meaning there is no other dicut $\delta^+(U')\subset{\delta^+(U)}$. \\
    First we show $D[U]$ and $D[V\backslash{U}]$ are weakly connected. Suppose otherwise. Let $S$ be a proper subset of $U$ such that there is no edge between $S$ and $U\backslash{S}$. Then $\delta^+(S)$ is also a dicut and a subset of $\delta^+(U)$, a contradiction to the minimality assumption of $\delta^+(U)$. Similarly, if $T$ is a proper subset of $\bar{U}=V\backslash{U}$ such that there is no edge between $T$ and $\bar{U}\backslash{T}$, then $\delta^+(U\cup{T})$ is also a dicut and a subset of $\delta^+(U)$, a contradiction to the minimality assumption of $\delta^+(U)$. Therefore $D[U]$ and $D[\bar{U}]$ are both weakly connected. 
    \begin{figure}
    \centering
    \tikzset{every picture/.style={line width=0.75pt}} 

\begin{tikzpicture}[x=0.65pt,y=0.65pt,yscale=-0.8,xscale=0.8]

\draw   (158,135.82) .. controls (172.91,135.82) and (185,159.14) .. (185,187.91) .. controls (185,216.68) and (172.91,240) .. (158,240) .. controls (143.09,240) and (131,216.68) .. (131,187.91) .. controls (131,159.14) and (143.09,135.82) .. (158,135.82) -- cycle ;
\draw  [dash pattern={on 4.5pt off 4.5pt}]  (161,156.82) -- (249,155.32) ;
\draw  [dash pattern={on 4.5pt off 4.5pt}]  (162,174.82) -- (251,174.32) ;
\draw [color={rgb, 255:red, 0; green, 0; blue, 0 }  ,draw opacity=1 ]   (162,193.82) -- (251,193.32) ;
\draw [color={rgb, 255:red, 0; green, 0; blue, 0 }  ,draw opacity=1 ]   (163,209.82) -- (251,209.32) ;
\draw  [dash pattern={on 4.5pt off 4.5pt}]  (163,225.82) -- (252,225.32) ;
\draw   (199,151) -- (209,156.41) -- (199,161.82) ;
\draw   (198,169) -- (208,174.41) -- (198,179.82) ;
\draw   (197,220) -- (207,225.41) -- (197,230.82) ;
\draw  [color={rgb, 255:red, 0; green, 0; blue, 0 }  ,draw opacity=1 ] (198,188) -- (208,193.41) -- (198,198.82) ;
\draw  [color={rgb, 255:red, 0; green, 0; blue, 0 }  ,draw opacity=1 ] (197,204) -- (207,209.41) -- (197,214.82) ;
\draw   (257,135.82) .. controls (271.91,135.82) and (284,159.14) .. (284,187.91) .. controls (284,216.68) and (271.91,240) .. (257,240) .. controls (242.09,240) and (230,216.68) .. (230,187.91) .. controls (230,159.14) and (242.09,135.82) .. (257,135.82) -- cycle ;
\draw  [dash pattern={on 4.5pt off 4.5pt}]  (362.5,126.5) -- (465,87.32) ;
\draw  [dash pattern={on 4.5pt off 4.5pt}]  (366,126.5) -- (468,108.82) ;
\draw [color={rgb, 255:red, 0; green, 0; blue, 0 }  ,draw opacity=1 ]   (366,126.5) -- (466,126.82) ;
\draw [color={rgb, 255:red, 0; green, 0; blue, 0 }  ,draw opacity=1 ]   (366,126.5) -- (466,145.82) ;
\draw  [dash pattern={on 4.5pt off 4.5pt}]  (366,126.5) -- (467,161.82) ;
\draw   (415.18,102.62) -- (422.41,103.41) -- (418.37,109.45) ;
\draw   (418.29,112.67) -- (425.95,115.8) -- (419.82,121.36) ;
\draw   (421.24,140.72) -- (426.63,148.1) -- (417.51,148.72) ;
\draw  [color={rgb, 255:red, 0; green, 0; blue, 0 }  ,draw opacity=1 ] (414,121) -- (421,125.91) -- (414,130.82) ;
\draw  [color={rgb, 255:red, 0; green, 0; blue, 0 }  ,draw opacity=1 ] (418.57,130.89) -- (424.22,137.28) -- (416.13,139.98) ;
\draw   (472,72.82) .. controls (486.91,72.82) and (499,96.14) .. (499,124.91) .. controls (499,153.68) and (486.91,177) .. (472,177) .. controls (457.09,177) and (445,153.68) .. (445,124.91) .. controls (445,96.14) and (457.09,72.82) .. (472,72.82) -- cycle ;
\draw   (377,207.82) .. controls (391.91,207.82) and (404,231.14) .. (404,259.91) .. controls (404,288.68) and (391.91,312) .. (377,312) .. controls (362.09,312) and (350,288.68) .. (350,259.91) .. controls (350,231.14) and (362.09,207.82) .. (377,207.82) -- cycle ;
\draw  [dash pattern={on 4.5pt off 4.5pt}]  (380,228.82) -- (470,265.32) ;
\draw  [dash pattern={on 4.5pt off 4.5pt}]  (381,246.82) -- (470,265.32) ;
\draw [color={rgb, 255:red, 0; green, 0; blue, 0 }  ,draw opacity=1 ]   (381,265.82) -- (470,265.32) ;
\draw [color={rgb, 255:red, 0; green, 0; blue, 0 }  ,draw opacity=1 ]   (382,281.82) -- (470,265.32) ;
\draw  [dash pattern={on 4.5pt off 4.5pt}]  (382,297.82) -- (470,265.32) ;
\draw   (424.38,242.08) -- (428.13,248.24) -- (421,249.32) ;
\draw   (420.5,250.83) -- (426.11,256.2) -- (419,259.32) ;
\draw   (418.98,280.85) -- (425.75,281.63) -- (422,287.32) ;
\draw  [color={rgb, 255:red, 0; green, 0; blue, 0 }  ,draw opacity=1 ] (421,261) -- (427,265.66) -- (421,270.32) ;
\draw  [color={rgb, 255:red, 0; green, 0; blue, 0 }  ,draw opacity=1 ] (417.9,272.11) -- (423.53,274.32) -- (419,278.32) ;
\draw  [fill={rgb, 255:red, 0; green, 0; blue, 0 }  ,fill opacity=1 ] (359,126.5) .. controls (359,124.57) and (360.57,123) .. (362.5,123) .. controls (364.43,123) and (366,124.57) .. (366,126.5) .. controls (366,128.43) and (364.43,130) .. (362.5,130) .. controls (360.57,130) and (359,128.43) .. (359,126.5) -- cycle ;
\draw  [fill={rgb, 255:red, 0; green, 0; blue, 0 }  ,fill opacity=1 ] (470,265.32) .. controls (470,263.39) and (471.57,261.82) .. (473.5,261.82) .. controls (475.43,261.82) and (477,263.39) .. (477,265.32) .. controls (477,267.25) and (475.43,268.82) .. (473.5,268.82) .. controls (471.57,268.82) and (470,267.25) .. (470,265.32) -- cycle ;
\draw   (298.53,161.64) -- (314.36,152.67) -- (312.45,149.3) -- (326.82,150.06) -- (320.09,162.77) -- (318.18,159.41) -- (302.35,168.38) -- cycle ;
\draw   (300.98,211.09) -- (317.11,219.5) -- (318.9,216.07) -- (326.07,228.54) -- (311.74,229.8) -- (313.53,226.37) -- (297.4,217.96) -- cycle ;

\draw (195,111) node [anchor=north west][inner sep=0.75pt]  [font=\footnotesize] [align=left] {$\displaystyle D$};
\draw (148,244) node [anchor=north west][inner sep=0.75pt]  [font=\footnotesize] [align=left] {$\displaystyle U$};
\draw (247,241) node [anchor=north west][inner sep=0.75pt]  [font=\footnotesize] [align=left] {$\displaystyle \overline{U}$};
\draw (463,177) node [anchor=north west][inner sep=0.75pt]  [font=\footnotesize] [align=left] {$\displaystyle \overline{U}$};
\draw (367,316) node [anchor=north west][inner sep=0.75pt]  [font=\footnotesize] [align=left] {$\displaystyle U$};
\draw (409,48) node [anchor=north west][inner sep=0.75pt]  [font=\footnotesize] [align=left] {$\displaystyle D_{1}$};
\draw (408,193) node [anchor=north west][inner sep=0.75pt]  [font=\footnotesize] [align=left] {$\displaystyle D_{2}$};
\draw (345,121) node [anchor=north west][inner sep=0.75pt]  [font=\footnotesize] [align=left] {$\displaystyle u$};
\draw (478,256) node [anchor=north west][inner sep=0.75pt]  [font=\footnotesize] [align=left] {$\displaystyle \overline{u}$};

\end{tikzpicture}
    \caption[non-trivial-dicut]{$D,D_1,D_2$ of Claim~\ref{clm:trivial-min-weight-dicuts}, solid arcs have weight $1$ and dashed arcs have weight $0$}
    \label{fig:non-trivial-dicut}
    \end{figure}
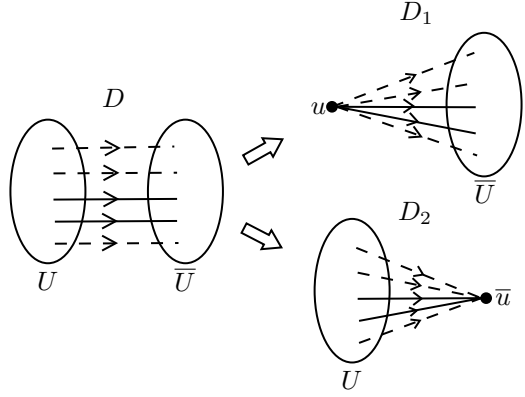
    
    Let $\{e,f\}$ be the two weight-$1$ arcs of $\delta^+(U)$. 
    Let $(D_1, w_1), (D_2, w_2)$ be the digraphs obtained by contracting $U, \bar{U}$ in $D$ to vertices $u,\bar{u}$ respectively, see figure~\ref{fig:non-trivial-dicut}. 
    Since $D[U]$ and $D[\bar{U}]$ are weakly connected, both $D_1,D_2$ are planar. Observe that weight-1 arcs in both $D_1,D_2$ form one connected component and minimum weight of a dicut is $2$. Hence 
    $(D_1, w_1), (D_2, w_2)$ are proper $0,1$-weighted planar digraphs. 
    
    Let $J_1, J_2$ be two disjoint dijoins among the weight-$1$ arcs for $D_1$ and let $J_3, J_4$ be two disjoint dijoins among the weight-$1$ arcs for $D_2$. We may assume $e\in{J_1,J_3}$ and $f\in{J_2,J_4}$. Let $J=J_1\cup{J_3}$ and $J'=J_2\cup{J_4}$. We show that $J,J'$ are dijoins for $D$. By symmetry, it is enough to show $J$ is a dijoin. \\
    Let $\delta^+(X)$ be a dicut of $D$. 
    For each case of $\delta^+(X)$, we go to a corresponding dicut in $D_1$ or $D_2$ and we show that $\delta^+(X)$ must intersect $J$. If $X\subset{U}$, then $\delta^+(X)$ remains a dicut in $D_2$ and must intersect $J_3$. Similarly if $X\subset{\bar{U}}$, then $\delta^+(X)$ remains a dicut in $D_1$ and must intersect $J_1$. Similar argument holds if $X$ contains one of $U$ or $\bar{U}$. Now suppose $X\cap{U}=Y$ and $X\cap{\bar{U}}=Y'$, both non-empty and proper. We have the following cases: 
    \begin{itemize}
        \item $e\in{\delta^+(X)}$, i.e. $e$ is from $Y$ to $\bar{U}\backslash{Y'}$, then $\delta^+(X)$ intersects $J$ in $e$. 
        \item $e\notin{\delta^+(X)}$ and is from $Y$ to $Y'$. Then $\delta^+(Y\cup\{u\})$ is a dicut in $D_1$ and there is an arc $h\in{\delta^+(Y\cup\{u\})\cap{J_1}}$ where $h$ is from $Y'$ to $\bar{U}\backslash{Y'}$. Hence $h\in{\delta^+(X)\cap{J}}$.  
        \item $e\notin{\delta^+(X)}$ and is from $U\backslash{Y}$ to $\bar{U}\backslash{Y'}$.  Then $\delta^+(Y)$ is a dicut in $D_2$ and there is an arc $g\in{\delta^+(Y)\cap{J_1}}$ where $g$ is from $Y$ to $U\backslash{Y}$. Hence $g\in{\delta^+(X)\cap{J}}$. 
    \end{itemize}
This shows that $J,J'$ are two disjoint dijoins among the weight-1 arcs for $D$. By this reduction we may assume that minimum weight dicuts of $D$ are trivial. 
\end{cproof}

\begin{claim}\label{clm:weight-0-cut-vertex}
Let $x$ be a weight-$0$ vertex in $D$. Then we may assume $x$ is not a cut vertex, i.e, \ref{A3} holds for $D$. 
\end{claim}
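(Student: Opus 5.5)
The plan is a reduction, in the style of Claims~\ref{clm:acyclic-whole-graph} and \ref{clm:trivial-min-weight-dicuts}: if a weight-$0$ vertex $x$ is a cut vertex, split $D$ at $x$ into smaller proper instances, solve them, and glue the dijoins back together. First I would use the weight-$1$ arcs. They form one connected component and are not incident to $x$, so all of them lie in a single block-component of $D\backslash\{x\}$. Since $x$ is a cut vertex, $D\backslash\{x\}$ has at least two components. Let $K$ be a component containing no weight-$1$ arc, and let $R$ be the union of the other components. Then $D[K\cup\{x\}]$ consists only of weight-$0$ arcs.

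Next I would show that the piece $K$ can be discarded. Let $D'=D\backslash K$. I would argue that every dicut $\delta^+(X)$ of $D'$ extends to a dicut of $D$ with the same weight. Put $K$ on the same side as $x$, that is, take $X\cup K$ if $x\in X$ and $X$ otherwise. The only arcs between $K$ and the rest of $D$ are incident to $x$, so no arc leaves or enters $K$ across the new cut. Hence the extended set is still a dicut shore, and it carries exactly the same arcs, so the weight is unchanged. We must also rule out the extended set being empty or all of $V$, which is immediate since $X$ is a proper nonempty subset of $V(D')$.

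Therefore every dicut of $D'$ has weight at least $2$. The weight-$1$ arcs of $D'$ are exactly those of $D$, so they still form one connected component. $D'$ is planar, being a subgraph of $D$. So $(D',w')$ is proper and strictly smaller, and by minimality (induction on $|V|$) its weight-$1$ arcs decompose into two dijoins $J_1,J_2$ of $D'$.

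It remains to show that $J_1,J_2$ are dijoins of $D$. Let $\delta^+(Y)$ be a dicut of $D$.
\begin{itemize}
\item If $Y\cap V(D')$ is a proper nonempty subset of $V(D')$, then $\delta^+(Y\cap V(D'))$ is a dicut of $D'$ contained in $\delta^+(Y)$, so it meets both $J_i$.
\item Otherwise $Y\cap V(D')$ is empty or all of $V(D')$, so the cut separates only vertices of $K$ from all of $V(D')$. The arcs of this cut then lie inside $D[K\cup\{x\}]$, so the cut has weight $0$. This contradicts properness of $D$.
\end{itemize}

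The main obstacle is the subcase where every component of $D\backslash\{x\}$ contains weight-$1$ arcs. I expect this to be impossible: connectivity of the weight-$1$ subgraph, which avoids $x$, forces all weight-$1$ arcs into a single component of $D\backslash\{x\}$. I would check carefully that a component consisting of one isolated vertex, with no arcs other than those to $x$, is handled by the same argument. It is, since the deletion argument does not need $K$ to contain any arcs.

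Repeating the deletion until $x$ is no longer a cut vertex gives \ref{A3}. The reductions only delete vertices, so they preserve \ref{A1} and do not create cycles. Minimum-weight dicuts of $D'$ correspond to dicuts of $D$ of the same weight, so \ref{A2} can be restored by reapplying the earlier claim if needed.
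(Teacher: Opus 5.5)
Your proof is correct and follows essentially the paper's route. All weight-$1$ arcs lie in one connected component of $D\backslash\{x\}$ (your ``block-component'' should just read ``connected component''). Each dicut $\delta^+(Y)$ of $D$ is then sent to a dicut of the reduced digraph by intersecting $Y$ with that component together with $x$.

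The one difference is that the paper \emph{contracts} each weight-$0$ component $C_i$ to a single vertex $u_i$. There, properness of the reduced digraph comes for free, since contraction never decreases dicut weights. You instead \emph{delete} $K$, so you must prove properness separately, and you do so correctly through your extension argument, which uses that $K$ meets the rest of $D$ only at $x$.

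In exchange, your deletion actually makes $x$ a non-cut vertex. In the paper's contracted digraph, $x$ is still a cut vertex with pendant vertices $u_i$, each forming a directed $2$-cycle with $x$. These disappear only after a further application of Claim~\ref{clm:acyclic-whole-graph}.
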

\begin{cproof}
    Let $x$ be a weight-$0$ vertex in $D$ and suppose $x$ is a cut vertex. Let $C_1,C_2,...C_k$ be the connected components of $D\backslash\{x\}$. Note that $x$ has an incoming and an outgoing arc to each $C_i$, otherwise we get a weight-0 dicut in $D$. Moreover note that all the weight-1 arcs should be in the same component, for example $C_1$, since they form one connected component. Let $(D_1, w_1)$ be the planar digraph obtained by contracting all the components $C_2,C_3,...,C_k$ in $D$ to vertices $u_2,u_3,...,u_k$ respectively. Observe that the weight-$1$ arcs remain one connected component in $D_1$. Since we are only contracting edges, the minimum weight of a dicut does not decrease in $D_1$. Hence $(D_1, w_1)$ is a proper $0,1$-weighted digraph. 
    
    Let $V_1$ be the vertices of the component $C_1$.
    Let $\delta^+(X)$ be a dicut of $D$. Note that $X\cap{V_1}\neq{\emptyset}$ since every dicut of $D$ has weight at least $2$. 
    If $x\in{X}$, let $X_1=(X\cap{V_1})\cup\{x, u_2, u_3,..., u_k\}$ and if $x\notin{X}$, let $X_1=X\cap{V_1}$. Then $\delta^+(X_1)$ is a dicut in $D_1$. Therefore, if $J_1, J_2$ are dijoins of $D_1$, they intersect $\delta^+(X_1)$ and hence $\delta^+(X)$ in their arcs from $X\cap{V_1}$ to $V_1\backslash{X}$, and therefore $J_1, J_2$ are also dijoins of $D$. By this reduction we may assume that the weight-$0$ vertices of $D$ are not cut vertices. 
\end{cproof}

\begin{claim}\label{clm:weight-0-two-cut-sets}
Let $x$ be a weight-$0$ vertex in $D$. Suppose $x$ is not a cut vertex. Then we may assume $x$ is not in a $2$-cut-set with a source/sink $v$ of weighted degree $2$ where the two weight-$1$ arcs of $v$ go to two different components of $D\backslash\{x,v\}$, i.e, \ref{A4} holds for $D$.  
\end{claim}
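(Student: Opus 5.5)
We argue as in Claims~\ref{clm:trivial-min-weight-dicuts} and \ref{clm:weight-0-cut-vertex}: split $D$ along the $2$-cut-set $\{x,v\}$ into two smaller proper plane digraphs, solve each (by minimality of $|V|+|A|$, or by induction), and glue the dijoins. By symmetry (reverse all arcs) we assume $v$ is a source of weighted degree $2$ whose weight-$1$ arcs $e=(v,a)$ and $f=(v,b)$ go to distinct components $K_1\ni a$ and $K_2\ni b$ of $D\backslash\{x,v\}$. Every other component $K$ of $D\backslash\{x,v\}$ contains only weight-$0$ arcs. The weight-$1$ arcs form one connected component and $x$ is weight-$0$, so this component contains no arc at $x$; it passes through $v$ only via $e,f$. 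Hence the weight-$1$ arcs lie inside $K_1\cup K_2\cup\{e,f\}$, and the weight-$1$ arcs within $K_1$ form a connected subgraph together with $e$, and similarly for $K_2$ with $f$.

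For the construction, let $D_1$ be obtained from $D$ by contracting $K_2$ together with all weight-$0$ components (other than $K_1$) into a single new vertex $y$, and define $D_2$ symmetrically by contracting $K_1$ and the weight-$0$ components into $y'$. Contraction of connected pieces adjacent along the face structure keeps planarity: each contracted piece is connected to $v$ and $x$, and we contract each separately and then merge through parallel arcs, or alternatively contract $K_2\cup\{x\}$ ($x$ is adjacent to $K_2$ since $x$ is not a cut vertex). Contraction never decreases dicut weights, so both digraphs have dicut weight at least $2$. The weight-$1$ arcs of $D_1$ are those of $K_1$ plus $e$ and $f$, where $f$ now ends at $y$. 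These stay connected, so $(D_1,w_1)$ and $(D_2,w_2)$ are proper and strictly smaller, and each has two disjoint dijoins $J_1,J_2$ and $J_3,J_4$ among its weight-$1$ arcs.

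For the gluing, we may assume $e\in J_1\cap J_3$ and $f\in J_2\cap J_4$, because in each $D_i$ the trivial dicut $\delta^+(\{v\})$ has weight exactly $2$ and consists of $e,f$. Set $J=(J_1\cup J_3)$ and $J'=(J_2\cup J_4)$; these partition the weight-$1$ arcs. It remains to check that $J$ is a dijoin of $D$. Take a dicut $\delta^+(X)$ of $D$ and do a case analysis on how $X$ meets $K_1$ and $K_2$, in the spirit of the three-case argument in Claim~\ref{clm:trivial-min-weight-dicuts}.
\begin{itemize}
\item If $X\cap K_2\in\{\emptyset,K_2\}$, project $X$ to $D_1$ by replacing $K_2$ (and the weight-$0$ pieces, and possibly $x$) by $y$ according to membership. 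This yields a dicut of $D_1$, which meets $J_1$ in an arc of $K_1\cup\{e,f\}$ that lies in $\delta^+(X)$.
\item If $X\cap K_1\in\{\emptyset,K_1\}$, the symmetric projection to $D_2$ applies.
\item If $X$ splits both $K_1$ and $K_2$, use Claim~\ref{clm:weight-1-dicut-both-sides-connected}-style arguments or project to whichever side gives a genuine dicut.
\end{itemize}
The main obstacle is the last case, together with ensuring the projected set is really a dicut, i.e.\ no arc enters it. The weight-$0$ components attached to $\{x,v\}$ and the membership of $x$ may create entering arcs after contraction. I would handle this by choosing $y$'s side as $X$'s side on $x$ and using that $\delta^-(X)=\emptyset$ forces consistency on the contracted pieces. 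If this fails, I would reduce differently: delete the weight-$0$ components via Claim~\ref{clm:weight-0-cut-vertex}-type contraction first.
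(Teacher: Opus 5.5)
Your outline matches the paper's: split along $\{x,v\}$, contract one side in each of two smaller instances, use the weight-$2$ dicut $\delta^+(\{v\})$ to put $e$ and $f$ into different dijoins, and glue $J=J_1\cup J_3$. However, the argument stops exactly where the work lies.

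\textbf{The main gap: dicuts splitting both $K_1$ and $K_2$.} For such a dicut $\delta^+(X)$ you only offer ``Claim~\ref{clm:weight-1-dicut-both-sides-connected}-style arguments'' or projecting ``to whichever side gives a genuine dicut''. That claim concerns shores of weight-$1$ dicuts of $D\backslash\{v\}$ in the Bridge Lemma and does not address this case. Moreover, $X$ itself is a dicut of neither $D_1$ nor $D_2$.

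The missing idea is to \emph{modify} $X$ so that it no longer splits one side. This uses three facts: there are no arcs between $K_1$ and $K_2$, $v$ is a source, and $e,f$ are the only weight-$1$ arcs at $v$.
\begin{itemize}
\item If $x\in X$ and $K_2\not\subseteq X$, then $X\cup K_1\cup\{v\}$ has no entering arc, so it is a dicut of $D$ not splitting $K_1$. The only weight-$1$ arc of its cut that can lie outside $\delta^+(X)$ is $f$. Hence $J_3$, which contains $e$ and therefore not $f$, meets $\delta^+(X)$.
\item If $x\notin X$, then $X\cap(K_1\cup\{v\})$ is a dicut avoiding $K_2$, again with $f$ as the only possible foreign weight-$1$ arc. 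So $J_1$ meets $\delta^+(X)$. If this set is empty, use $X\cap K_2$ in $D_2$ instead.
\end{itemize}
This is exactly what the normalization $e\in J_1\cap J_3$, $f\in J_2\cap J_4$ is for. The paper carries out the corresponding case split according to whether $x$ and $v$ belong to $X$.

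\textbf{The supporting steps are only asserted.}
\begin{itemize}
\item \emph{Planarity of the construction.} Identifying the disconnected set $K_2\cup W$ to one vertex, where $W$ is the union of the weight-$0$ components, is not a contraction of a connected subgraph. ``Merge through parallel arcs'' is not a planarity argument.
\item \emph{Connectivity of $K_2\cup\{x\}\cup W$.} Contracting this set does work, but it is connected because of properness, not because $x$ is not a cut vertex. If some component $K$ of $D\backslash\{x,v\}$ sent no arc to $x$, then $\delta^+(V\backslash K)$ would be a dicut of weight at most $1$.
\item \emph{Validity of your first two bullets.} These need the same kind of argument. First, $x$ has arcs in both directions with $K_1$ and with $K_2$; otherwise $\delta^+(V\backslash K_i)$ or $\delta^+(K_i\cup\{v\})$ has weight $1$. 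So no dicut separates $x$ from a whole $K_i$. Second, if $x\in X$ then $X$ cannot split a weight-$0$ component $K$; otherwise $\delta^+((X\cap K)\cup(V\backslash K))$ is a weight-$0$ dicut. You flag these as obstacles but do not prove them.
\end{itemize}

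The paper sidesteps the weight-$0$ components with a separate preliminary reduction that contracts each one to a single vertex. Then, with exactly two components $U,\bar U$, it contracts each of them alone while keeping $x$ and $v$ as separate vertices.
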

\begin{cproof}
    Suppose otherwise. By symmetry, we assume that $v$ is a source in $D$. Let $V$ be the vertex set of $D$. 
    First suppose that $D\backslash\{x,v\}$ has more than two components. Then there exist a component $C$ in $D\backslash\{x,v\}$ that only consists of weight-$0$ arcs. Note that if $C$ has only one vertex $c$, no dicut in $D$ separates $x$ and $c$, therefore we can identify them. Otherwise, let $(D',w')$ be obtained by contracting $C$ to a vertex $c$. It can be easily checked that $(D',w')$ is a proper $0,1$-weighted digraph. Let $\delta^+(X)$ be a dicut in $D$. If it does not separate $C$, it will remain a dicut in $D'$. Otherwise suppose $X\cap{C}=Y\neq{C, \emptyset}$. If $x\in{X}$, then $\delta^+(Y\cup{(V\backslash{C})})$ is a weight-$0$ dicut in $D$, a contradiction. Hence $x\notin{X}$. Then $\delta^+(Y')$ is a dicut in $D'$ where $Y'=X\cap{(V\backslash{C})}$. Therefore if $J_1, J_2$ are two disjoint dijoins in $D'$ among its weight-$1$ arcs, they are also dijoins of $D$. 

    Now suppose $D\backslash\{x,v\}$ has exactly two components $U, \bar{U}$. Let $e,f$ be the weight-1 arcs from $v$ to $U$ and from $v$ to $\bar{U}$ respectively. Note that x has both incoming and outgoing arcs to both $U$ and $\bar{U}$, otherwise we get a dicut of weight-1 in $D$. Let $(D_1, w_1),(D_2,w_2)$ be the digraphs obtained by contracting $U,\bar{U}$ in $D$ respectively to vertices $u_1,u_2$, as shown in figure~\ref{fig:two-cut-set}.  
    \begin{figure}[!hbt]
    \centering
    \input{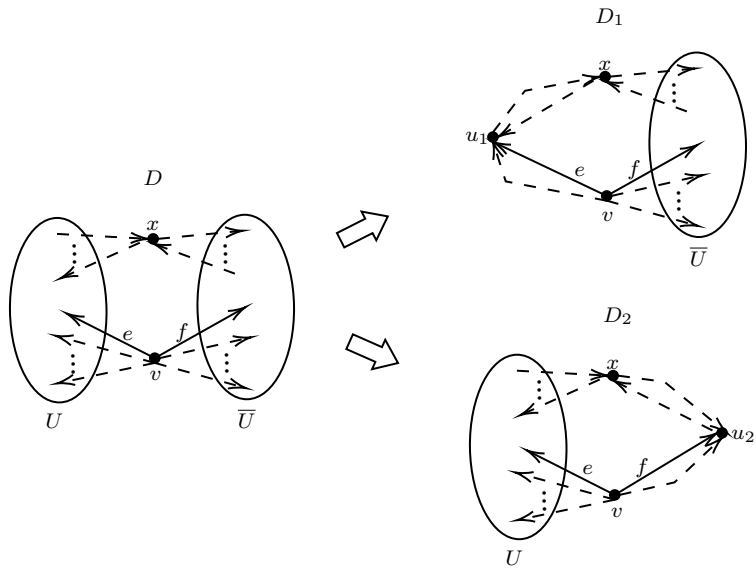}
    \caption[two-cut-set]{$D,D_1,D_2$ of Claim~\ref{clm:weight-0-two-cut-sets}, solid arcs have weight-$1$, and dashed arcs have weight-$0$}
    \label{fig:two-cut-set}
    \end{figure}
    It can be checked similarly to before that $(D_1,w_1), (D_2, w_2)$ are proper. Let $J_1, J_2$ be two disjoint dijoins of $D_1$ and let $J_3, J_4$ be two disjoint dijoins of $D_2$ among their weight-$1$ arcs. Since $\delta^+{v}$ is a minimum weight dicut, we may assume $e\in{J_1, J_3}$ and $f\in{J_2, J_4}$. Let $J=J_1\cup{J_3}$ and $J'=J_2\cup{J_4}$. We show that $J, J'$ are dijoins of $D$. By symmetry, it is enough to show that $J$ is a dijoin. 

    Let $\delta^+(X)$ be a dicut of $D$. For each case of $\delta^+(X)$, we go to a corresponding dicut in $D_1$ or $D_2$ and we show that $\delta^+(X)$ must intersect $J$. If $X\subset({U\cup\{x,v\})}$ or $X\subset({\bar{U}\cup\{x,v\})}$, then $\delta^+(X)$ remains a dicut in $D_2$ or $D_1$ respectively and hence it will intersect $J_3$ or $J_1$. Now let $X\cap{U}=Y\neq{U, \emptyset}$ and $X\cap{\bar{U}}=\bar{Y}\neq{\bar{U}, \emptyset}$. We have the following cases:
    \begin{itemize}
        \item $v\in{X}$: If $e\in\delta^+(X)$, then $\delta^+(X)$ has an intersection with $J$. Otherwise $\delta^+(\bar{Y}\cup\{x,v,u_1\})$ is a dicut in $D_1$. Hence there exist an arc $h\in{\delta^+(\bar{Y}\cup\{x,v,u_1\})\cap{J_1}}$ where $h$ is from $\bar{Y}$ to $\bar{U}\backslash\bar{Y}$. Therefore $h\in{\delta^+(X)\cap{J}}$.
        \item $x,v\notin{X}$: Then $\delta^+(Y)$ is a dicut in $D$ and therefore in $D_2$. Hence there exist an arc $g\in{\delta^+(Y)\cap{J_3}}$ where $g$ is from $Y$ to $U\backslash{Y}$. Therefore $g\in{\delta^+(X)\cap{J}}$. 
        \item $x\in{X}$ and $v\notin{X}$: This case only can happen if there is no weight-0 arc from $v$ to $x$. Then $\delta^+(U\cup{\bar{Y}}\cup{\{v\}})$ is a dicut in $D$ and also in $D_1$. Hence there exist an arc $k\in{\delta^+(U\cup{\bar{Y}}\cup{\{v\}})\cap{J_1}}$ where $k$ is from $\bar{Y}$ to $\bar{U}\backslash\bar{Y}$. Therefore $k\in\delta^+(X)\cap{J}$. 
    \end{itemize}
This shows that $J, J'$ are dijoins of $D$. By this reduction, we may assume that the weight-$0$ vertices of $D$ are not in a $2$-cut-set with mentioned properties.     
\end{cproof}

Recall that a vertex $v$ a near-source if $|N^-(v)|=1$ and $|N^+(v)|\ge 1$. Similarly a vertex $u$ is a near-sink if $|N^+(u)|=1$ and $|N^-(u)|\ge 1$.  

\begin{claim}\label{clm:near-source}
    Let $v$ be a weight-$0$ vertex in $D$. Then we may assume that $v$ is not a near-source or a near-sink, i.e, \ref{A5} holds for $D$. 
\end{claim}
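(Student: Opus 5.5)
Suppose $v$ is a weight-$0$ near-source with unique in-neighbour $u$ (so $|N^-(v)|=1$); the near-sink case is symmetric by reversing all arcs, which swaps dicuts with their complements and preserves dijoins. The plan is to \emph{contract} the arc set between $u$ and $v$, i.e.\ identify $u$ and $v$ into a single vertex $u'$, obtaining $(D',w')$. As in \Cref{clm:acyclic-whole-graph}, contracting a connected subgraph keeps $D'$ planar. Every arc incident to $v$ has weight $0$, so no weight-$1$ arc is destroyed and the weight-$1$ arcs still form one connected component.

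Next I would check properness. Every dicut $\delta^+_{D'}(X')$ of $D'$ lifts to a dicut $\delta^+_D(X)$ of $D$, where $X$ is $X'$ with $u'$ replaced by $\{u,v\}$ when $u'\in X'$. The two cuts have the same arcs, so the minimum dicut weight does not drop and $(D',w')$ is proper. Moreover $|V(D')|<|V(D)|$, so we may apply induction or repeat the reduction.

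The key step is showing that dijoins of $D'$ are dijoins of $D$. Let $J_1,J_2$ be disjoint dijoins of $D'$ among the weight-$1$ arcs, and let $\delta^+_D(X)$ be a dicut of $D$. If $X$ contains both or neither of $u,v$, then it descends to a dicut of $D'$ with the same arc set, so it meets $J_1$ and $J_2$. The case $u\in X$, $v\notin X$ is impossible: the arcs from $u$ to $v$ would leave $X$, which is fine, but we need to examine whether anything enters $X$. In fact this case is harmless and can be handled as follows. Since $u$ is the only in-neighbour of $v$, the set $X\cup\{v\}$ also has no entering arc, because every arc into $v$ comes from $u\in X$. Its outgoing cut is $\delta^+(X)$ with the arcs from $u$ to $v$ removed and the arcs from $v$ to $V\setminus X$ added. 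All of these changed arcs have weight $0$, so the weight-$1$ arcs of $\delta^+(X\cup\{v\})$ are a subset of those of $\delta^+(X)$. Since $X\cup\{v\}$ contains both $u$ and $v$, it descends to a dicut of $D'$ (using $|N^+(v)|\ge1$ and the minimum weight $\geq 2$ to ensure it is a genuine proper nonempty shore, not all of $V$). Hence it meets each $J_i$ in a weight-$1$ arc, and that arc also lies in $\delta^+(X)$. In the remaining case $v\in X$, $u\notin X$, the arc $(u,v)$ enters $X$, which contradicts $\delta^+(X)$ being a dicut, so this case cannot occur.

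The main obstacle is verifying that the modified shores are genuine dicut shores (neither empty nor equal to $V$) and that the contraction does not create directed cycles or multiple arcs that break the bookkeeping. Acyclicity can be restored afterwards by \Cref{clm:acyclic-whole-graph}; contracting a $2$-vertex set with arcs only from $u$ to $v$ cannot create a directed cycle through $u'$ unless there was already a path from $v$ back to $u$, which \ref{A1} excludes. Parallel arcs are harmless for dicuts. Finally, the reduction strictly decreases $|V|$, so it can be iterated together with the previous claims until \ref{A5} holds.
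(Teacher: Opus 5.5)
Your proof is correct and is essentially the paper's argument: contracting the arcs between $u$ and $v$ gives the same digraph (up to parallel arcs) as the paper's step of deleting $v$ and adding weight-$0$ arcs from $u$ to every vertex of $N^+(v)$, and your dicut correspondence is the same as the paper's. Your handling of shores with $u\in X$, $v\notin X$ via $X\cup\{v\}$ is a valid and slightly more explicit version of the paper's map $X\mapsto X\setminus\{v\}$; just delete the false start that first calls this case impossible.
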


\begin{cproof}
    By symmetry, assume that $v$ is a near-source weight-$0$ vertex. Let $N^-(v)=\{l\}$ and $N^+(v)=\{r_1, r_2, ..., r_t\}$ for some $t\ge 1$. Let $(D',w')$ be obtained by removing $v$ and adding the weight-$0$ arcs $\{(l,r_i)|i\in[t]\}$. Observe that $D'$ is planar. We show that there is a one to one correspondence between dicuts of $D$ and $D'$. Let $\delta^+(X)$ be a dicut of $D$; Then $\delta^+(X\backslash\{v\})$ is a dicut of $D'$. Let $\delta^+(Y)$ be a dicut of $D'$. If $l\notin{Y}$, then $r_1, r_2, ..., r_t\notin{Y}$ and $\delta^+(Y)$ is also a dicut in $D$. Otherwise if $l\in{Y}$, then $\delta^+(Y\cup\{v\})$ is a dicut in $D$. The correspondence shows that $(D',w')$ is a proper $0,1$-weighted digraph and if $J_1,J_2$ are two disjoint dijoins of $D'$ among the weight-$1$ arcs, they are also dijoins of $D$. By this reduction, we may assume that the weight-$0$ vertices of $D$ are not near-source or near-sink. 
\end{cproof}

The goal is to find two disjoint dijoins of $D$ in its weight-$1$ arcs. Suppose $D$ has weight-$0$ vertices. By Claims \ref{clm:acyclic-whole-graph}-\ref{clm:near-source}, we may assume $D$ is super-proper. By \Cref{lem:bridge-lemma}, we can replace these weight-$0$ vertices by a plane subgraph while ensuring that the minimum weight of a dicut remains $2$. Let $(D',w')$ be the obtained proper $0,1$-weighted planar digraph where the weight-$1$ arcs form a connected and spanning component. Then by \Cref{cor:circle-removal}, $D'$ has two disjoint dijoins $J_1, J_2$ contained in its weight-$1$ arcs. By \Cref{lem:bridge-lemma}, $J_1,J_2$ are also dijoins of $D$. 
\qed

\section{Future Research Directions}
In this paper, we characterized when crossing families admit a ($\cap\cup$-closed) cosigning. An interesting future research direction would be to characterize when such cosignings exist for other classes of set families. Also of interest is further applicability of such cosignings. 

We provided polynomial algorithms for finding a ($\cap\cup$-closed) cosigning when the crossing family is given explicitly. In combinatorial optimization, however, many set families have exponential size. We addressed this by providing an oracle polynomial algorithm for cosigning a well-provided crossing family. A very interesting research direction is to develop such an algorithm for $\cap\cup$-closed cosignings, or argue NP-hardness. 

Finally, we built an outer-planar gadget for a digraph whose vertices, comprised of sources and sinks, are placed around a circle, and every cut coming from a certain cosigned crossing family is covered by an incoming arc. We used this to give a graph-theoretic proof of \Cref{thm:main}. Using techniques from submodular flows and polyhedral integrality, the authors have recently shown that this theorem extends to non-planar digraphs as well~\cite{ADN25}. Another research direction is to find a graph-theoretic proof of this.

\vspace{0.5cm}
\textbf{Acknowledgments.} This work was supported in part by EPSRC grant EP/X030989/1.

\textbf{Data Availability Statement.} No data are associated with this article. Data sharing is not applicable to this article.

\textbf{Disclosure of Interests.} The authors have no competing interests to declare that are
relevant to the content of this article.

{\small \bibliographystyle{abbrv}\bibliography{references}}

\end{document}